\documentclass{amsart}
\usepackage{amssymb}
\usepackage{latexsym}
\usepackage{amsmath}
\usepackage{enumerate}
\usepackage{amsthm}
\usepackage{wasysym}
\usepackage{stmaryrd}
\usepackage{mathrsfs}
\usepackage[usenames]{xcolor}
\usepackage{tikz}
\usepackage{pifont}

\newcommand{\qee} {\hspace*{2mm}\hfill \ding{109}}

\newtheorem{theorem}{Theorem}[section]
\newtheorem{exa}[theorem]{Example}
\newenvironment{example}{\begin{exa} \rm}{\qee\end{exa}}
\newtheorem{exerc}[theorem]{Exercise}

\newtheorem{conj}[theorem]{Conjecture}

\newtheorem{ques}[theorem]{Open Question}
\newenvironment{question}{\begin{ques} \rm}{\qee\end{ques}}
\newtheorem{lem}[theorem]{Lemma}
\newenvironment{lemma}{\begin{lem} \it}{\end{lem}}
\newtheorem{cor}[theorem]{Corollary}
\newenvironment{corollary}{\begin{cor} \it}{\end{cor}}
\newtheorem{rem}[theorem]{Remark}
\newenvironment{remark}{\begin{rem} \rm}{\qee\end{rem}}

\newcommand{\medent}{\medskip\noindent}
 \newcommand{\tupel}[1]{{\langle #1 \rangle}}
\newcommand{\verz}[1]{\{ #1 \}}
\newcommand{\To}{\Rightarrow}
\renewcommand{\iff}{\leftrightarrow}

\newcommand{\hyph}{\mbox{-}}

\newcommand{\gnum}[1]{\underline{\ulcorner #1 \urcorner}}
\newcommand{\dpr}{\boxbar}
\newcommand{\gnumm}[1]{{\ulcorner #1 \urcorner}}

\DeclareMathOperator{\possible}{\text{\tikz[scale=.6ex/1cm,baseline=-.6ex,rotate=45,line width=.1ex]{
                            \draw (-1,-1) rectangle (1,1);}}}
\DeclareMathOperator{\necessary}{\text{\tikz[scale=.6ex/1cm,baseline=-.6ex,line width=.1ex]{
                            \draw (-1,-1) rectangle (1,1);}}}

\DeclareMathOperator{\odotpossible}{\text{\tikz[scale=.6ex/1cm,baseline=-.6ex,rotate=45,line width=.1ex]{
                            \draw (-1,-1) rectangle (1,1);  \draw (0,0) circle (.35);}}}

\newcommand{\apr}{{\vartriangle}}

\newcommand{\opr}{\necessary}

\newcommand{\oco}{\possible}

\title[The Absorption Law]{The Absorption Law\\ \emph{\small or} \\
How to Kreisel a Hilbert-Bernays-L\"ob}
\author{Albert Visser}
 \address{Philosophy, Faculty of Humanities,
                Utrecht University,
               Janskerkhof 13,
                3512BL~~Utrecht, The Netherlands}
\email{a.visser@uu.nl}
\date{\today}

\keywords{Provability, Arithmetization, Incompleteness}

\subjclass[2010]{03F30, 03F40, 03F45}


\thanks{I am grateful to Fedor Pakhomov for sharing his ideas about 
slow provability and for a number of corrections
to an earlier version of this paper.
I thank Volodya Shavrukov for asking good questions and for pointing me to some 
references. I am grateful to Michael Beeson 
for pointing out a mistake and an ambiguity. I thank the anonymous referee for excellent suggestions..}

\begin{document}

\begin{abstract}
In this paper, we show how to construct for a given consistent theory $U$ a $\Sigma^0_1$-predicate that both satisfies the L\"ob Conditions and the Kreisel Condition ---even if $U$
 is unsound. We do this in such a way that $U$ itself can verify satisfaction of an internal version of the Kreisel Condition.
\end{abstract}

\maketitle

\section{Introduction}
When does a predicate $P$ in a theory $U$ count as a provability predicate for $U$? 
There are various ideas on the market
to explicate this notion. These ideas provide conditions for being a provability predicate that cater to various 
intuitions.\footnote{We refer the reader to \cite{halb:self14} 
and \cite{viss:seco16} for some philosophical discussion
of these ideas.} In the present paper, three classes of conditions will be considered: the Hilbert-Bernays-L\"ob 
Conditions, the Kreisel Condition and the Feferman Condition.
We will introduce the various conditions with some care in Section~\ref{condi}. In the present paper, we will 
not go into the philosophical discussion about the meaning 
of the conditions and their relative pro's and con's. However, in Appendix~\ref{examples}, we will give 
examples that illustrate that all three classes of conditions are
independent of one another. These examples can help the reader to form her own impression of what 
the conditions involve and, possibly, help her to get more grip 
on the issues surrounding the choice between the various classes of conditions.

The aim of our paper is to study the interplay of the  Hilbert-Bernays-L\"ob Conditions and the Kreisel 
Condition for the case of $\Sigma^0_1$-predicates.
The Kreisel condition for a provability predicate $\apr$ for a theory $U$ demands that
$U\vdash \apr A$ iff $U \vdash A$. A first question is whether we can have the Kreisel Condition for a predicate
that satisfies the L\"ob Conditions in case our theory $U$ is unsound. For example,
what happens when our theory is ${\sf PA}+{\sf incon}({\sf PA})$?  A second question is as follows.
If $\apr$ satisfies the L\"ob Conditions, the theory $U$, when consistent, cannot verify both the 
Kreisel Condition and the internal Kreisel Condition
$\apr\apr A \iff \apr A$. However, can we have the next best thing, to wit:
given an appropriately good provability predicate $\opr$ for $U$, can we find a predicate $\apr$, that satisfies
the L\"ob Conditions and for which we have both $U\vdash \apr A$ iff $U \vdash A$, and $U \vdash \opr\apr A \iff \opr A$?
As we will see, the answer to the last question is \emph{yes}. We can find, in many cases, a  predicate $\apr$ that
satisfies the L\"ob Conditions, the Kreisel Condition and the internal Kreisel Condition $U \vdash \opr\apr A \iff \opr A$.

We develop a general construction of a $\Sigma^0_1$-predicate $\apr$ that satisfies both the L\"ob Conditions and
the external and internal Kreisel Conditions from suitable data. The internal Kreisel
principle $\vdash \opr\apr A \iff \opr A$ splits in two sub-principles, to wit, \emph{the absorption law} $\vdash \opr\apr A \to \opr A$
and \emph{the emission law} $\vdash \opr A \to \opr\apr A$. Our main focus will be on the absorption law.

\subsection{Historical Note}
The following fact is due to Orey. Suppose $U$ is an extension of {\sf PA}. Then, we can find an elementary $\alpha$ such
that $\alpha$ represents the axiom set of $U$ over {\sf PA} and  $U \nvdash \opr_\alpha \bot$. See \cite{fefe:arit60}. 
See also  \cite[Chapter 2]{lind:aspe03}.

 A construction of a Fefermanian predicate\footnote{This means, roughly, that we use a standard arithmetization of syntax and proofs,
but tinker just with the representation of the axiom set. See Subsection~\ref{fefcon}, for a detailed explanation.}
 with $\Sigma^0_1$-axiomatization $\alpha$ for a theory $U$ that extends {\sf PA}, such that none of the
 iterated $\opr_{\alpha}$-inconsistency statements  $\opr^n_{\alpha}\bot$ is provable in $U$ is given in
 \cite{bekl:clas90}. See also \cite{kura:arit18a}.
  The result we will prove extends the results of Orey and Beklemishev.
 
 The predicates constructed in the present paper can be
 viewed as slow provability predicates. The absorption law holds for slow provability predicates over {\sf PA}.
  Slow provability over {\sf EA} was introduced and studied in \cite{viss:seco12}.
Slow provability over {\sf PA} was introduced and studied in \cite{frie:slow13}. Our knowledge concerning this notion was further extended in 
\cite{henk:slow16}, \cite{freu:slow17},  \cite{freu:shor20} and \cite{rath:long17}. 

The disadvantage of the present approach to slow provability is 
that the connection to proof theory and ordinal analysis is not visible. 
The advantage of the present approach to slow provability compared to the one of \cite{frie:slow13} is its wider scope.
Moreover, as we discuss below, it is not known whether the approach of \cite{frie:slow13} works for 
Heyting's Arithmetic {\sf HA}, the constructive counterpart of
{\sf PA}, where our approach works without problems. (This does not mean that it would not be very interesting to see how to extend the methods
of  \cite{frie:slow13} to the constructive case.)

An alternative approach to obtain a provability style predicate that satisfies both the L\"ob Conditions and the Kreisel Condition can be found in 
Section 5 of \cite{viss:tran16}. The approach in the present paper has a number of advantages. First, it is somewhat more perspicuous. Secondly,
the constructed predicates also satisfy the Hilbert-Bernays Condition. Thirdly, the construction of the predicates is fixed-point-free.
Fourthly, using the present approach we can also, in a number of cases, construct
predicates $\apr$ with the desired properties that are Fefermanian (in a sense that will be further specified in the paper).

The basic idea for the predicate constructed in this paper is due to
Fedor Pakhomov. He suggested considering this predicate when I asked him whether there was a non-model theoretic proof of
the absorption law for slow provability. However, the proof of absorption given in this paper is quite different from the one Fedor had in mind.

\subsection{Prerequisites}
The reader should be familiar with basic materials from \cite{haje:meta91}. For certain local results there
may be further prerequisites, but we will make these clear \emph{in situ}.

\subsection{Overview of the Paper}
In Section~\ref{baco}, we introduce the basic facts, notations and definitions for the rest of the paper.

Section~\ref{condi}, is a brief treatment of the L\"ob Conditions, the Kreisel Condition and the Feferman Condition.

Then, in Section~\ref{filo}, we construct a predicate $\apr$ with the desired properties for theories $U$ that extend Peano Arithmetic.
This is, in many respects, the simplest case of the construction. 
In this simple case, we have the extra property that our $\apr$ is Fefermanian. On the other hand,
the construction is subject to some restrictions. It
works only if we start with an elementary numeration $\alpha$ of the axiom set of $U$ such that $\pi\preceq \alpha$.
The construction delivers an axiom set numerated by $\widetilde\alpha$ for $U$. However, we will have $\pi\not\preceq \widetilde \alpha$,
so that the construction cannot be iterated. The construction in this section will also be covered by the more
general construction in Section~\ref{baarg}. However, we think the general treatment becomes easier to follow
if one has seen Section~\ref{filo} first.

In Section~\ref{baarg} , we give the basic construction in the abstract, starting from a predicate $\theta$ that satisfies a 
list of properties. We illustrate how $\opr_{\alpha_x}$ of Section~\ref{filo} does satisfy the properties for
$\theta$, so that the approach of Section~\ref{filo} is subsumed under the approach of Section~\ref{baarg}.

Finally, in Section~\ref{olijkesmurf}, we show, under fairly general conditions,  how to construct a predicate $\theta$ that
satifies the desired properties.

In Appendix~\ref{examples}, we provide separation examples between the L\"ob Condtions, the Kreisel Condition and the
Feferman Conditions.

\section{Basic Conventions, Notations, Definitions}\label{baco}
In this section, we introduce basic conventions and fix some notations and give some definitions.

\subsection{Theories}
A theory $U$ in this paper is a theory in the signature of arithmetic.\footnote{Everything in the paper lifts
to the more general case where a theory of arithmetic is interpretable in the given theory. However,
it is pleasant to avoid the extra notational burden of the more general case. The notational burdens of the
present paper seem to be sufficiently heavy already.} A theory is given by
a set $X$ of axioms. We will generally assume that $X$ is a recursively enumerable set.
However, $X$ is just given as a set and it is not intrinsically connected with a presentation.
We will assume as a default that $U$ extends Elementary Arithmetic {\sf EA}. 

Two salient theories of the paper are Elementary Arithmetic {\sf EA} and Peano Arithmetic {\sf PA}. The theory {\sf EA} is
$\mathrm I\Delta_0+{\sf exp}$. It is
finitely axiomatizable by a single axiom $B$. See \cite{haje:meta91}. The predicate $x= \gnum B$ will be called $\beta$.
The theory {\sf PA} has a standard elementary presentation $\pi$ of the axiom set corresponding to the usual axiom scheme.

We will also consider the extension of {\sf EA} with the $\Sigma^0_1$-collection principle $\mathrm B \Sigma^0_1$.
This principle is given by: \[\vdash \forall x \leq a \,\exists y\, S_0(x,y) \to \exists b\,  \forall x \leq a \,\exists y\leq b\, S_0(x,y).\]
Here $S_0$ is $\Sigma^0_1$ and may contain further parameters.

\subsection{Arithmetization}\label{arithm}
We will sometimes use implementation properties of the arithmetization like
monotonicity and the efficiency of syntactical operations. For this reason, we outline a
few features of the G\"odel coding we intend to use. We use a style of G\"odel numbering that
is due to Smullyan (see \cite{smul:theo61}). Our G\"odel numbering is based
on the length-first ordering. We enumerate the strings of our finite alphabet according to length
and the strings of the same length alphabetically. The G\"odel number of a string $s$ will be the number
of occurrence in this enumeration. In this ordering the arithmetical function tracing concatenation is of the order
of multiplication. We can use our bijective coding of strings to implement sequences of numbers. This has the bonus
that also concatenation of sequences of numbers will be of the order of multiplication.\footnote{Usually, there is some overhead
in defining sequences since we want to add some materials to make the definition of the projection function easy.
However, the uses of sequences to define syntax and proofs usually only require that we  can determine whether
something occurs in a sequence before something else. For this one does not need the extra material.}

We will in many cases employ modal notations. E.g., let ${\sf prov}_\alpha$ be the arithmetization of
provability from the axioms in $\alpha$. We write $\opr_\alpha A$ for ${\sf prov}_\alpha(\gnum{A})$. Here
$\gnum A$ is the numeral of the G\"odel number of $A$. We will sometimes
quantify the sentence-variables inside a modal operator. For example, we write things like:  
\[ (\dag) \;\;\; \forall A,B\, ((\opr_\alpha A \wedge \opr_\alpha(A \to B)) \to \opr_\alpha B).\]
This stands for: 
\[ (\ddag)\;\;\; \forall x\,\forall y\,\forall z\, (({\sf prov}_\alpha(x) \wedge {\sf imp}(x,y,z) \wedge  {\sf prov}_\alpha(z)) \to {\sf prov}_\alpha(y)).\]

\noindent
Admittedly, such notations are somewhat sloppy, but I think in practice they are very convenient. E.g., (\dag) is more pleasant to read
than (\ddag).

We employ the usual conventions for quantifiying numerical variables into modal contexts. E.g. $\opr_\alpha A(x)$
means $\exists z \,({\sf sub}(x,\gnum{v_0},\gnum{A(v_0)},z) \wedge {\sf prov}_\alpha(z))$.

We will employ the witness comparison notation. Suppose $A = \exists x\, A_0(x)$ and $B = \exists x\, B_0(x)$. We write:
\definecolor{uuxred}{cmyk}{0.2,1,0.9,0.1}
\begin{itemize}
\item
$A \leq B$ for $\exists x\, (A_0(x) \wedge \forall y < x\, \neg\, B_0(y))$. 
\item
$A < B$ for $\exists x\, (A_0(x) \wedge \forall y \leq x\, \neg\, B_0(y))$. 
\end{itemize}

\subsection{Ordering of Predicates for Axioms}
Let $\gamma(x)$ and $\delta(x)$ be formulas with only $x$ free that {\sf EA}-verifiably
represent classes of  arithmetical sentences. Let $T$ be an extension of {\sf EA}.
 We write $\gamma \preceq_T \delta$ for
\[T \vdash \forall A\, ({\sf prov}_\gamma(A) \to {\sf prov}_\delta (A)).\] Here ${\sf prov}_\alpha$ is a standard
arithmetization of provability from $\alpha$. Our default for $T$ will be {\sf EA} and we will write $\preceq$ for
$\preceq_{\sf EA}$.

It is easy to see that $\preceq_T$ is a partial pre-ordering. 

\section{Conditions for Provability Predicates}\label{condi}
In this section, we introduce three (classes of) conditions that aim to explicate when a predicate
is a provability predicate. 

\subsection{The L\"ob Conditions}
To state the L\"ob conditions we write $\apr A$ for $P(\gnum{A})$ and $\vdash$ for provability in $U$.
The L\"ob conditions (introduced in \cite{loeb:solu55}) are as follows. 
\begin{enumerate}[{\sf L}1.]
\item
If $\vdash A$, then $\vdash \apr A$
\item
$ \vdash (\apr A \wedge \apr(A\to B)) \to \apr B$
\item
$\vdash \apr A \to \apr\apr A$
\end{enumerate}
We obtain the  \emph{Hilbert-Bernays Conditions} in case we replace {\sf L}3 by:
\begin{enumerate}[{\sf HB}.]
\item
 $\vdash S \to \apr S$, for $\Sigma^0_1$-sentences $S$,
\end{enumerate}

\noindent
The usual assumption connected to the Hilbert-Bernays conditions is that $P$ be $\Sigma^0_1$, so that
{\sf L}3 is a special case of {\sf HB}. It is easy to see that if $P$ is not  $\Sigma^0_1$, we can have
${\sf L}1,2$ and {\sf HB} but not {\sf L}3. E.g., we may 
  take $P$ to be Feferman provability for {\sf PA}. 

We note that, in case $P$ is $\exists\Sigma^{\sf b}_1$, the L\"ob conditions are more general than the 
Hilbert-Bernays Conditions. For example, in a weak theory like
${\sf S}^1_2$ we do have the L\"ob Conditions for ${\sf S}^1_2$-provability 
 for a standard provability predicate ---assuming an efficient arithmetization---, but it is unknown whether we have the
Hilbert-Bernays Conditions.

Technically, the L\"ob Conditions constitute a superior analysis of the proof of the Second Incompleteness Theorem.
The philosophical use of the Conditions is independent of their technical interest. The 
philosophical idea is that the L\"ob Conditions explicate \emph{the theoretical role} that a provability predicate plays in a theory. 

We note that the L\"ob conditions do depend on the choice of G\"odel numbering and hence are still not entirely `coordinatefree'. 
For a study of this dependence and a proposal to
abstract away from it, see \cite{grab:inva18}.

The L\"ob Conditions also have a uniform and a global version. In the uniform version we allow parameters in the formulas inside the operator.
For example, {\sf L}2 becomes: $ \vdash \forall \vec x\, (\apr A(\vec x\,) \wedge \apr(A(\vec x\,)\to B(\vec x\,))) \to \apr B(\vec x\,)$. In the global version,
the quantifiers over sentences are not outside but inside the theory.  For example, {\sf L}2 becomes:
$ \vdash \forall A,B\in {\sf sent}\,( (\apr A \wedge \apr(A\to B)) \to \apr B)$. We note that the global version is stronger than the
uniform one. We will not consider the strengthened conditions in the present paper.

\subsection{The Kreisel Condition}
The \emph{Kreisel Condition} was first formulated in \cite{krei:prob53}. 
Its statement is as follows:
\begin{enumerate}[{\sf K}.]
\item
$U\vdash \apr A$ iff $U\vdash A$.
\end{enumerate}

\noindent We note that the Kreisel Condition is of a quite different nature than the L\"ob Conditions. It
just asks that the theory \emph{numerates} its own provability by the given predicate. 

One could imagine a variant of the Kreisel Condition where we just ask numerability in a base theory $U_0$ that is a sub-theory of $U$.

Finally, we observe that, like the L\"ob Conditions, the Kreisel Condition does depend on the chosen G\"odel numbering.

\subsection{The Feferman Condition}\label{fefcon}
We explain the idea that a provability-predicate is \emph{Fefermanian}. We derive this idea from the methodology introduced in \cite{fefe:arit60}.
The main ingredient of the idea is simply to fix a preferred arithmetization of provability and allow the
choice of the predicate $\alpha$ representing the axiom-set to be free, given that it satisfies certain adequacy conditions. 

The best way to present a Fefermanian predicate is to view it as a tuple $\tupel{U_0,U, \alpha}$. Here
$U_0$ is the \emph{base theory} and $U$ is the \emph{lead theory}. We ask that $U$ extends the base $U_0$.
We demand that $\alpha$ numerates an axiom set $X$ for $U$ in the base theory $U_0$.
In other words, we demand that $A\in X$ iff $U_0 \vdash \alpha(\gnum{A})$.

We note that the demands on a Fefermanian predicate treat the axioms of the lead theory via a condition similar
to the Kreisel Condition.

In the present paper, we will consider Fefermanian predicate modulo provability in the base theory.
Thus, we will say that $P$ is Fefermanian for $U$ over $U_0$ in the relaxed sense iff, there is an $\alpha$ such that
 $\tupel{U_0,U, \alpha}$ is Fefermanian in the strict sense and $U_0 \vdash \forall x\, (P(x) \iff {\sf prov}_\alpha(x))$.

The reader may object that the Feferman Condition does not count as a real condition since
it employs an unspecified specification of the arithmetization.\footnote{As remarked above the other conditions suffer, admittedly to
a lesser degree, from the same defect.} Of course, the reader is correct here. Feferman, in his paper,
does specify a choice for a proof system and an arithmetization. However, 
in  Feferman's arithmetization, the G\"odel number of a formula is superexponential in its length, so it is
not a convenient G\"odel numbering to work with within {\sf EA}.  
 Moreover, if Feferman's specific G\"odel numbering would really be the golden standard, it would be reasonable
 that everybody would know its specification, but, of course, that is not the case. I see the use of the Feferman idea more as
 dialogical. The reader is asked to take her favored good arithmetization in mind and read for {\sf prov} provability 
 according to that arithmetization. So, {\sf prov} becomes context dependent like the word `you'. I will employ the
 Feferman idea in this way.

\subsection{Properties of Fefermanian Predicates}

In this subsection we briefly consider some basic insights on Fefermanian predicates.

Let ${\sf A}_U$ be the class of all $\alpha$ in $\Sigma^0_1$ such that $\tupel{{\sf EA}, U,\alpha}$ is Fefermanian.

\begin{theorem}\label{grootgrutsmurf}
Let $U$ be a theory. 
Then ${\sf A}_U$ has a minimum w.r.t. $\preceq$
iff $U$ is finitely axiomatizable. 
\end{theorem}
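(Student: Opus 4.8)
The plan is to prove both directions directly. For the ``if'' direction, suppose $U$ is finitely axiomatizable, say by the single sentence $C$. Then take $\alpha_C(x)$ to be the formula $x = \gnum C$ (or, more carefully, an elementary formula equivalent to it that manifestly lies in $\Sigma^0_1$). Since $\mathsf{EA}$ proves $U \vdash A \iff U \vdash C \to A$ and can formalize this reasoning, one checks that $\langle \mathsf{EA}, U, \alpha_C\rangle$ is Fefermanian and that for any other $\alpha \in \mathsf A_U$ we have $\mathsf{EA} \vdash \forall A\, ({\sf prov}_{\alpha_C}(A) \to {\sf prov}_\alpha(A))$: from a proof of $A$ from the single axiom $C$ one extracts a proof of $C \to A$ in pure logic, and since $\alpha$ numerates $C$ as an axiom (using that $\langle \mathsf{EA}, U, \alpha\rangle$ is Fefermanian, so $U \vdash C$ hence, if one is careful about what ``numerates'' buys inside $\mathsf{EA}$, one needs $\alpha$ to prove $\alpha(\gnum C)$ — here I would use that for $\alpha$ to be Fefermanian we at least have $U_0 \vdash \alpha(\gnum C)$), one assembles an $\alpha$-proof of $A$ inside $\mathsf{EA}$. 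So $\alpha_C$ is a $\preceq$-minimum of $\mathsf A_U$.

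For the ``only if'' direction, I would argue contrapositively: assume $U$ is not finitely axiomatizable and show $\mathsf A_U$ has no $\preceq$-minimum. The key point is that a $\Sigma^0_1$ formula $\alpha$ with $\langle \mathsf{EA}, U, \alpha\rangle$ Fefermanian defines, provably in $\mathsf{EA}$, an r.e. set of axioms whose deductive closure is $U$; in particular each individual theorem of $U$, and each individual axiom it numerates, is derivable from finitely many of those axioms. If $\alpha_0$ were a $\preceq$-minimum, then ${\sf prov}_{\alpha_0}$ would be $\mathsf{EA}$-provably contained in ${\sf prov}_\alpha$ for every Fefermanian $\alpha$. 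I would then construct, for the given $\alpha_0$, a ``thinner'' Fefermanian presentation $\alpha_1$ that omits enough axioms so that $\alpha_0 \not\preceq \alpha_1$: since $U$ is not finitely axiomatizable, for every finite subset of the axioms numerated by $\alpha_0$ there is a $U$-theorem not provable from it, and one can diagonalize — build $\alpha_1$ by an $\mathsf{EA}$-definable thinning procedure that, at stage $n$, declines to include axioms in a way that blocks the $n$-th potential $\mathsf{EA}$-proof that ${\sf prov}_{\alpha_0} \subseteq {\sf prov}_{\alpha_1}$, while still enumerating a set deductively equivalent to $U$ (e.g. replace the omitted finite pieces by suitable single consequences, exploiting that $U$ is recursively enumerable so one always has fresh axioms to add). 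This yields $\alpha_1 \in \mathsf A_U$ with $\alpha_0 \not\preceq \alpha_1$, contradicting minimality.

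The main obstacle is the ``only if'' direction, specifically making the diagonal thinning argument honest. The subtlety is that $\alpha_0 \preceq \alpha_1$ is an $\mathsf{EA}$-\emph{provability} statement, not a mere inclusion of r.e. sets, so it is not enough to arrange that some $\alpha_0$-proof uses an axiom absent from $\alpha_1$; one must ensure $\mathsf{EA}$ cannot \emph{prove} the simulation by some clever reorganization. The clean way to handle this is to exploit the asymmetry between the two sides of $\preceq$: one makes $\alpha_1$ enumerate its axioms very slowly (so that $\mathsf{EA}$ cannot verify that a given $\alpha_0$-axiom ever appears), while keeping $\alpha_1$'s closure equal to $U$ by interleaving, at a provably-controlled rate, single sentences that together axiomatize $U$. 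Getting the bookkeeping right — that $\alpha_1$ remains $\Sigma^0_1$, that $\langle\mathsf{EA},U,\alpha_1\rangle$ is genuinely Fefermanian, and that the failure of $\alpha_0\preceq\alpha_1$ is robust — is where the real work lies. I would also double-check the edge case where $U$ is inconsistent (then it is trivially finitely axiomatizable by $\bot$, so the ``if'' direction covers it) and the case of the empty theory / $\mathsf{EA}$ itself, which is finitely axiomatizable by $B$, giving $\beta$ as the minimum.
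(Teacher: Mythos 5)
Your ``if'' direction is essentially the paper's argument and works, except for one repairable slip: you cannot get ${\sf EA} \vdash \alpha(\gnum{C})$ from the Feferman condition, since $C$ (your chosen finite axiomatization) need not belong to the axiom set that $\alpha$ actually numerates. What you need, and what does hold, is ${\sf EA} \vdash \opr_\alpha C$: since $U \vdash C$, there is a concrete standard $\alpha$-proof of $C$, each of whose finitely many axioms is numerated by $\alpha$ in {\sf EA}, and {\sf EA} verifies this fixed proof outright. With that, attaching it to the deduction-theorem transform of an $\alpha_C$-proof goes through inside {\sf EA} (the paper pastes proofs of the axioms $A_i$ above their occurrences, which is the same idea).

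The ``only if'' direction has a genuine gap, precisely at the point you flag as ``where the real work lies'', and the difficulty is worse than you describe. If $\alpha_0$ and $\alpha_1$ both present axiomatizations of $U$, then $\forall A\,({\sf prov}_{\alpha_0}(A) \to {\sf prov}_{\alpha_1}(A))$ is \emph{true} in the standard model, since both predicates define the same set of theorems. So there is nothing to ``block'': no finite diagonal action can falsify the statement, and a purely recursion-theoretic thinning or slow-enumeration construction gives no handle on the {\sf EA}-\emph{unprovability} of a true $\Pi^0_2$ sentence. (Your slow-enumeration variant also collides with the Feferman condition itself, which demands ${\sf EA} \vdash \alpha_1(\gnum{A})$ for every actual axiom $A$, so {\sf EA} can verify that each individual axiom appears.) The missing ingredient is an incompleteness argument. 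The paper supplies it via the Second Incompleteness Theorem: since $U$ is not finitely axiomatizable, the sentence $C := \forall x\,\exists B\,(\opr_\alpha B \wedge \neg\,\opr_{\alpha_x} B)$ is true, so ${\sf EA}+C$ is consistent; one thins $\alpha$ to $\alpha'(x) :\iff \alpha(x) \wedge \forall y \leq x\,\neg\,{\sf proof}_\gamma(y,\gnum{\bot})$, where $\gamma$ axiomatizes ${\sf EA}+C$. Then $\alpha'$ is extensionally equal to $\alpha$ (hence still Fefermanian) and $\alpha'\preceq\alpha$; but if $\alpha \preceq \alpha'$ held, then, reasoning in ${\sf EA}+C$, any $\gamma$-proof of $\bot$ would bound all $\alpha'$-axioms, and $C$ would yield a $U$-theorem that is $\alpha$-provable but not $\alpha'$-provable --- a contradiction --- so ${\sf EA}+C$ would prove its own consistency. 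Without some such appeal to G\"odel's theorem your construction cannot be completed.
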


\begin{proof}
Suppose $U$ is finitely axiomatizable, say by $A_0,\ldots, A_{n-1}$. Let  $\alpha^\ast(x) :=\bigvee_{i<n} x=\gnum{A_i}$.
Consider any $\alpha$ in ${\sf A}_U$. We find for $i<n$ that $U \vdash A_i$, and, hence ${\sf EA} \vdash \opr_\alpha A_i$.

We reason in {\sf EA}. 
Suppose $p$ witnesses $\opr_{\alpha^\ast} A$ and $p_i$, for $i<n$, witnesses $\opr_\alpha A_i$.
We obtain an $\alpha$-proof $q$ of $A$ by adding the $p_i$ `above' $A_i$ to $p$.
(Note that we do not need $\Sigma^0_1$-collection since $n$ is standard.)

Suppose $U$ is not finitely axiomatizable. Consider any $\alpha\in {\sf A}_U$. Clearly,
for any $n$ there is a $B$ such that $U\vdash B$ but the axioms in $\alpha$ that are $\leq n$ do
not prove $B$. Hence, $C:= \forall x\, \exists B \, (\opr_\alpha B \wedge \neg\, \opr_{\alpha_x} B)$ is true, 
where $\alpha_x(y) :\iff \alpha(y) \wedge y\leq x$.
Thus, ${\sf EA}+C$ is consistent. Let $\gamma (x) :\iff \beta(x) \vee x=\gnum{C}$, where $\beta$ is the standard axiomatization of {\sf EA}.
We define:
\[\alpha'(x) :\iff \alpha(x) \wedge \forall y \leq x\,\neg\,{\sf proof}_\gamma(x,\gnum{\bot}).\] 

\noindent
It is evident that $\alpha'\preceq \alpha$. Suppose (\dag) $\alpha\preceq \alpha'$. We reason inside ${\sf EA}+C$.
By (\dag), we have $\forall B\, (\opr_\alpha B \to \opr_{\alpha'} B)$.
Suppose $p$ is a $\gamma$-proof of $\bot$. It follows that the $\alpha'$ axioms are below $p$.
Consider $B$ such that $\opr_\alpha B$ but not $\opr_{\alpha_p}B$. It follows that $\neg\, \opr_{\alpha'}B$.
A contradiction. It follows that there is no $\gamma$-proof of $\bot$, in other words, $\oco_\gamma\top$.
We leave ${\sf EA}+C$.

We have shown ${\sf EA}+C\vdash \oco_\gamma\top$. But this contradicts the Second Incompleteness Theorem.
Hence (\dag) must fail.
\end{proof}

\begin{remark}
What happens if we replace {\sf EA}  in the definition of ${\sf A}_U$ by another base theory $T$ and, simultaneously,
 consider $\preceq_T$ in stead of $\preceq_{\sf EA}$? Inspection of the proof of Theorem~\ref{grootgrutsmurf}
 shows that we have to replace $\beta$
in the proof by an elementary predicate numerating the axioms of $T$ in $T$. This can always be arranged due to
Craig's trick. We also have to assume that $T$ is $\Sigma^0_2$-sound to be sure that $T+C$ is consistent.
We note that the application of the Second Incompleteness theorem goes through by the usual argument
since the analogue of $\gamma$ is elementary.\footnote{If we consider a $\Sigma^0_1$-axiomatization in a context without $\Sigma^0_1$-collection, 
the L\"ob conditions may fail. However, even in such L\"obless cases, the Second Incompleteness Theorem holds. See 
\cite{viss:look19}.}

Thus, our result goes through, as long as the base theory is $\Sigma^0_2$-sound. 
\end{remark}

\begin{theorem}
Consider theories $U_0$ and $U$ where {\sf EA} is a sub-theory of $U_0$ and $U_0$ is a sub-theory of $U$.
Suppose:
\begin{enumerate}[a.]
 \item
 $P$ numerates $U$ in $U_0$.
 \item
 $P$ contains $U_0$-provably all predicate-logical tautologies.
 \item
 $P$ is $U_0$-provably closed under finite conjunctions.
 \item
 $P$ is $U_0$-provably closed under modus ponens.
 \end{enumerate}
 Then, $P$ is Fefermanian \textup(in the relaxed sense\textup) for $U$ over 
 $U_0$ as witnessed by $\tupel{U,U_0, P}$.
\end{theorem}

\begin{proof}
Clearly, we have $U_0 \vdash \forall B\in P \, {\sf prov}_P(B)$. Conversely, reason in $U_0$.
Suppose $p$ is a $P$-proof of $B$. Let $X$ be the finite set of $P$-axioms used in $p$.
Then, $(\bigwedge X \to B)$ is a predicate logical tautology, so $(\bigwedge X \to B)\in P$. 
By closure under conjunction, we have
$\bigwedge X \in P$. Hence, by closure under modus ponens, we find $B\in P$.
\end{proof}

\begin{theorem}
Consider theories $U_0$ and $U$ where $\mathrm I\Sigma^0_1$ is a sub-theory of $U_0$ and $U_0$ is a sub-theory of $U$.
Let $P$ be a $\Sigma^0_1$-predicate.
Suppose:
\begin{enumerate}[a.]
 \item
 $P$ numerates $U$ in $U_0$.
 \item
 $P$ contains $U_0$-provably all predicate-logical tautologies.
 \item
 $P$ is $U_0$-provably closed under modus ponens.
 \end{enumerate}
 Then $P$ is Fefermanian for $U$ over $U_0$ with $P$ itself as representation of the axiom set.
\end{theorem}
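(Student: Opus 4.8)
The plan is to reduce this statement to the previous theorem by showing that, in the presence of $\mathrm I\Sigma^0_1$ in the base theory $U_0$, hypotheses (a)--(c) here already imply all four hypotheses of Theorem~3.3, the missing one being that $P$ is $U_0$-provably closed under finite conjunctions. So the whole content is: \emph{$\mathrm I\Sigma^0_1$ proves that a set of sentences containing all propositional tautologies and closed under modus ponens is closed under binary conjunction.} Once that is established, Theorem~3.3 applies verbatim and gives the conclusion.

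First I would reason inside $U_0$ (which contains $\mathrm I\Sigma^0_1$). Suppose $A \in P$ and $B \in P$. The sentence $A \to (B \to (A \wedge B))$ is a predicate-logical tautology, hence lies in $P$ by hypothesis~(b). Applying closure under modus ponens (hypothesis~(c)) with the premise $A \in P$ yields $B \to (A\wedge B) \in P$; applying modus ponens again with $B \in P$ yields $A \wedge B \in P$. Thus $P$ is $U_0$-provably closed under binary conjunction. To get closure under arbitrary \emph{finite} conjunctions $\bigwedge X$ for a coded finite set $X$ of sentences all of whose members lie in $P$, I would run an induction on the size of $X$ (or on the length of the sequence enumerating $X$): this is exactly a $\Sigma^0_1$-induction, since ``$s \in P$'' is $\Sigma^0_1$ and the inductive statement ``$\bigwedge(\text{first }k\text{ members of }X) \in P$'' is $\Sigma^0_1$ in the parameter $X$. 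Here is where $\mathrm I\Sigma^0_1$ is actually used, and it is the one place where the extra strength over $\mathsf{EA}$ is needed; the base case is the tautology $\top \in P$ (from (b)) and the successor step is the binary-conjunction fact just proved.

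Having secured $U_0$-provable closure under finite conjunctions, I would simply invoke Theorem~3.3: with $\mathsf{EA}$ a sub-theory of $\mathrm I\Sigma^0_1 \subseteq U_0 \subseteq U$ and with hypotheses (a)--(d) of that theorem now all verified, we conclude that $P$ is Fefermanian for $U$ over $U_0$ with $P$ itself as the representation of the axiom set. Concretely this means $U_0 \vdash \forall B\,(P(B) \iff {\sf prov}_P(B))$, which is what the proof of Theorem~3.3 extracts from the closure properties: one direction, $P(B) \to {\sf prov}_P(B)$, is immediate since a single axiom is trivially provable from itself; for the converse one takes a $P$-proof $p$ of $B$, collects the finite set $X$ of $P$-axioms occurring in $p$, notes that $(\bigwedge X \to B)$ is a propositional tautology hence in $P$ by (b), uses the new closure under finite conjunctions to put $\bigwedge X$ in $P$, and then modus ponens delivers $B \in P$.

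The only step requiring genuine care is the finite-conjunction induction, and the point to watch is that it must be carried out as a formalized $\Sigma^0_1$-induction inside $U_0$ rather than as an informal external ``iterate the binary case'' argument — in Theorem~3.2 the analogous iteration over the $n$ standard axioms was legitimate precisely because $n$ was a fixed standard number, whereas here the finite set $X$ of axioms used in a proof has nonstandard size in general, so one genuinely needs induction for a $\Sigma^0_1$ formula with $X$ as parameter. Apart from that, everything is a routine unwinding of the hypotheses together with a citation of Theorem~3.3.
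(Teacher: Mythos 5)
Your proof is correct and follows exactly the route the paper takes: the paper's entire proof is the one-line observation that under these hypotheses closure under finite conjunctions is provable by $\Sigma^0_1$-induction, after which the previous theorem applies. You have merely filled in the details of that induction (binary conjunction via the tautology $A \to (B \to (A\wedge B))$ and modus ponens, then induction on the length of the coded finite set), including the correct remark about why a genuine internal induction is needed here whereas the external iteration sufficed in the finitely axiomatized case.
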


\begin{proof}
Under the assumptions of the theorem, we can prove that $P$ is closed under finite conjunctions by $\Sigma^0_1$-induction.
\end{proof}

\begin{example}
We take as base and lead theory {\sf PA}. The predicate $\opr_\pi\opr_\pi$ is Fefermanian. Similarly, for
$\exists x\,\opr_\pi^{x+1}(\cdot)$. The last predicate is, modulo {\sf PA}-provable equivalence, 
\emph{Parikh provability} or \emph{fast provability}.
  Parikh provability can be obtained by adding to an axiomatization based on $\pi$ the Reflection Rule:
  ${\vdash \opr_\pi A} \To {\vdash A}$. See \cite{pari:exis71}. See also \cite{henk:nons16}.
\end{example}

\begin{theorem}
Suppose $U$ extends {\sf EA} and $P$ is Fefermanian w.r.t. a $\Delta_0({\sf exp})$-presentation $\alpha$ of the
axiom set. Then, $P$ satisfies the L\"ob Conditions.  
\end{theorem}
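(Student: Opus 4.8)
The plan is to reduce the claim to the classical fact that, for a $\Delta_0({\sf exp})$-presentation $\alpha$ of the axiom set of a theory $U$ extending {\sf EA}, the predicate $\opr_\alpha$ satisfies {\sf L}1--{\sf L}3 already over {\sf EA}, hence a fortiori over $U$. Since $P$ is Fefermanian with respect to $\alpha$ we have ${\sf EA}\vdash\forall x\,(P(x)\iff\opr_\alpha(x))$ (in the strict reading $P$ is literally $\opr_\alpha$), and once {\sf L}1 is known for $\opr_\alpha$ one may box this equivalence, so that $P$ and $\opr_\alpha$ become {\sf EA}-provably interchangeable both inside and outside $\opr_\alpha$; the three conditions then transfer from $\opr_\alpha$ to $P$ by a routine chain such as $PA\to\opr_\alpha A\to\opr_\alpha\opr_\alpha A\to\opr_\alpha PA\to PPA$. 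So I will simply establish {\sf L}1--{\sf L}3 for $\opr_\alpha$.

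For {\sf L}1: if $U\vdash A$, fix a concrete proof $p$ of $A$ from axioms $A_0,\dots,A_{k-1}$ of $U$. Because $\alpha$ numerates the axiom set of $U$ in {\sf EA}, we have ${\sf EA}\vdash\alpha(\gnum{A_i})$ for each $i<k$; since moreover $\alpha$ is $\Delta_0({\sf exp})$, the statement ``$p$ is an $\alpha$-proof of $A$'' is a true closed $\Delta_0({\sf exp})$-sentence and hence provable in {\sf EA}, so ${\sf EA}\vdash\opr_\alpha A$. For {\sf L}2 one formalizes a purely syntactic manipulation inside {\sf EA}: from an $\alpha$-proof of $A$ and an $\alpha$-proof of $A\to B$ one forms their concatenation followed by a single modus ponens line and checks that the result is again an $\alpha$-proof of $B$; that this construction and its verification are feasible in {\sf EA} rests on the efficiency of the Smullyan-style coding of Section~\ref{arithm} (concatenation has the growth rate of multiplication) together with the fact that checking axiomhood of lines is $\Delta_0({\sf exp})$.

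The real work is {\sf L}3, and it is exactly here that the hypothesis $\alpha\in\Delta_0({\sf exp})$ is used essentially. With $\alpha$ of that complexity the proof relation ``$p$ is an $\alpha$-proof of $x$'' is $\Delta_0({\sf exp})$ --- no appeal to $\mathrm B\Sigma^0_1$ is needed to contract the bounded quantifier over the lines of $p$ --- so $\opr_\alpha A$ is a genuine $\Sigma^0_1$-sentence over {\sf EA}, and {\sf L}3 becomes the instance $S:=\opr_\alpha A$ of formalized provable $\Sigma^0_1$-completeness ${\sf EA}\vdash S\to\opr_\alpha S$. I would obtain the latter in two {\sf EA}-internal steps: (i) formalized $\Sigma^0_1$-completeness of {\sf EA}, ${\sf EA}\vdash S\to\opr_\beta S$, where $\beta$ presents the single axiom $B$ of {\sf EA}; and (ii) ${\sf EA}\vdash\opr_\alpha B$ --- which holds because $U\vdash B$, so finitely many $\alpha$-verifiable axioms prove $B$ --- whence, {\sf EA}-provably, any $\beta$-proof is converted into an $\alpha$-proof by splicing in an $\alpha$-proof of $B$ wherever $B$ is used; composing (i) and (ii) gives ${\sf EA}\vdash S\to\opr_\alpha S$ for $\Sigma^0_1$ sentences $S$. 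The main obstacle is precisely ensuring that this whole chain --- the $\Sigma^0_1$-contraction of the proof relation, formalized $\Sigma^0_1$-completeness, and the cut turning $\beta$-proofs into $\alpha$-proofs --- is carried out inside {\sf EA} itself rather than in a stronger fragment like ${\sf EA}+\mathrm B\Sigma^0_1$; with a merely $\Sigma^0_1$ presentation of the axioms this would break down over {\sf EA}, which is why the $\Delta_0({\sf exp})$-assumption (and the efficient coding) is exactly what the theorem needs. Everything else is standard verification.
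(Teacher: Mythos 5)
Your proof is correct and is exactly the standard argument the paper is relying on: the paper states this theorem without proof, treating it as the classical fact (essentially from H\'ajek--Pudl\'ak) that for a $\Delta_0({\sf exp})$-presentation $\alpha$ the proof predicate is $\Delta_0({\sf exp})$, so $\opr_\alpha$ is genuinely $\Sigma^0_1$ and {\sf L}3 falls out of formalized provable $\Sigma^0_1$-completeness over {\sf EA}, with {\sf L}1--{\sf L}2 by direct formalization and the transfer to $P$ via the boxed {\sf EA}-provable equivalence $P(x)\iff{\sf prov}_\alpha(x)$. You also correctly isolate the role of the $\Delta_0({\sf exp})$ hypothesis, which is precisely what distinguishes this theorem from the following one, where a merely $\Sigma^0_1$ presentation forces the additional assumption of $\mathrm B\Sigma^0_1$.
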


\begin{theorem}
Suppose $U$ extends ${\sf EA}+ \mathrm{B}\Sigma^0_1$ and $P$ is Fefermanian w.r.t. a $\Sigma_1$-presentation $\alpha$ of the
axiom set. Then, $P$ satisfies the L\"ob Conditions.  
\end{theorem}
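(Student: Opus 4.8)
The plan is to rerun the proof of the preceding theorem, isolate the single place where the $\Delta_0({\sf exp})$-character of the presentation was really used, and substitute there an appeal to $\mathrm B\Sigma^0_1$. Write $\alpha(x)\iff\exists w\,\alpha_0(x,w)$ with $\alpha_0\in\Delta_0({\sf exp})$, and identify $\apr$ with $\opr_\alpha$ (legitimate modulo $U_0$-provability for the purposes of the L\"ob Conditions). Then ${\sf prov}_\alpha(x)$ says: there is a $p$ coding a sequence with last entry $x$ each of whose lines is a logical axiom, an $\alpha$-axiom, or follows by a rule; and the clause ``$\alpha$-axiom'', sitting underneath a bounded quantifier $\forall i<{\sf lh}(p)$, unfolds to $\exists w\,\alpha_0((p)_i,w)$. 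Over ${\sf EA}+\mathrm B\Sigma^0_1$ this bounded universal quantifier over a $\Sigma^0_1$-matrix collapses, so $U\vdash {\sf prov}_\alpha(x)\iff\exists b\,\exists p\,\psi_0(x,b,p)$ with $\psi_0\in\Delta_0({\sf exp})$; in particular $P$ is, provably in $U$, a $\Sigma^0_1$-predicate, and it is enough to establish {\sf L}1, {\sf L}2 and {\sf HB}, since then {\sf L}3 is the instance of {\sf HB} for the $\Sigma^0_1$-sentence $\apr A$.

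{\sf L}1 and {\sf L}2 are proved verbatim as in the $\Delta_0({\sf exp})$-case and use no collection. For {\sf L}1: if $U\vdash A$, a fixed derivation of $A$ invokes standardly many axioms $C_1,\dots,C_k$ of $U$; since $\alpha$ numerates the axiom set of $U$ in $U_0\subseteq U$ we get $U\vdash\alpha(\gnum{C_j})$ for each $j$, and splicing these $k$ facts into the fixed derivation gives $U\vdash\opr_\alpha A$ --- it is the standardness of $k$ that makes this finitistic. For {\sf L}2: reasoning in ${\sf EA}$, concatenating an $\alpha$-proof of $A$ with an $\alpha$-proof of $A\to B$ and appending a modus-ponens line yields an $\alpha$-proof of $B$, an elementary operation on the codes. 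We also record, for use below, that $U\vdash\forall z\,({\sf prov}_\beta(z)\to{\sf prov}_\alpha(z))$: by {\sf L}1 applied to the single axiom $B$ of {\sf EA} we have $U\vdash\opr_\alpha B$, and then, inside $U$, any $\beta$-proof of $z$ is turned into an $\alpha$-proof by inserting a fixed $\alpha$-proof of $B$ above each occurrence of $B$ as an axiom (the resulting $\alpha$-axiom lines carry the certificates coming from that fixed $\alpha$-proof of $B$).

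The real work is {\sf HB}: $U\vdash S\to\apr S$ for $\Sigma^0_1$-sentences $S$, argued by external induction on the build-up of $S$ in the usual provable-$\Sigma^0_1$-completeness style. The base clause reduces, via the conversion ${\sf prov}_\beta\to{\sf prov}_\alpha$ just noted, to the uniform provable $\Delta_0({\sf exp})$-completeness of $\beta$-provability, which is already available in {\sf EA}; the clauses for $\wedge,\vee,\exists$ are routine internalizations of logic. The clause that consumes $\mathrm B\Sigma^0_1$ is the bounded-universal one, where the relevant sub-formula instance has the shape $\forall x\le t\,C(x)$ with $t$ a genuine variable: from ``for every $i\le t$ there is an $\alpha$-proof of the instance $C(\bar\imath)$'' one must first gather all these proofs into a single sequence and only then splice them, with logical glue, into one $\alpha$-proof of $\forall x\le t\,C(x)$; here $\mathrm B\Sigma^0_1$ supplies the common bound that makes the family of proofs --- hence the spliced proof --- exist as an object, after which everything is elementary. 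This is exactly the step where the preceding theorem could instead display an explicit term-bound on the proofs, since a $\Delta_0({\sf exp})$-completeness construction produces proofs of term-bounded size; a $\Sigma^0_1$-presentation destroys that size control, and $\mathrm B\Sigma^0_1$ is its replacement.

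I expect the main obstacle to be precisely this collection step and its bookkeeping: it must be carried out with only $\mathrm B\Sigma^0_1$, not full $\mathrm I\Sigma^0_1$, in hand; the internal/external distinctions (the passage from a true $\Delta_0({\sf exp})$-fact with parameters to its provability statement, and from there to $\opr_\alpha\opr_\alpha A$) must be tracked carefully through the $\Sigma^0_1$-completeness induction; and one should check that both the flattening of ${\sf prov}_\alpha$ to $\Sigma^0_1$-shape and the uniform $\Delta_0({\sf exp})$-completeness of $\opr_\alpha$ indeed follow already from $\mathrm B\Sigma^0_1$ over {\sf EA}.
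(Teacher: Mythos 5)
The paper states this theorem (and its $\Delta_0({\sf exp})$ companion) without proof, treating it as standard, so there is no official argument to compare against; judged on its own terms, your proof is correct and is essentially the standard one. You identify the right key point: over $\mathrm B\Sigma^0_1$ the bounded universal quantifier over the $\Sigma^0_1$ matrix in ${\sf prov}_\alpha$ collapses, so $P$ is provably $\Sigma^0_1$ and {\sf L}3 becomes an instance of {\sf HB} (modulo the routine detour through the provable equivalence $\opr_\alpha A \iff S'$ and an application of {\sf L}1 to $S' \to \opr_\alpha A$, which you gesture at and which does need to be said). Your treatment of {\sf L}1 and {\sf L}2 is fine. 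The one place where you make life harder than necessary is the $\Sigma^0_1$-completeness induction: once you have $U \vdash \forall z\,({\sf prov}_\beta(z) \to {\sf prov}_\alpha(z))$ --- which, as you note, needs only a single fixed $\alpha$-proof of the axiom $B$ of {\sf EA} and hence no collection --- you can run the entire completeness induction for $\opr_\beta$ inside {\sf EA}, where the proofs of instances are term-bounded and the bounded-universal clause is unproblematic, and convert the final $\beta$-proof to an $\alpha$-proof in one step at the end. So the collection step you flag as ``the real work'' in the {\sf HB} induction is avoidable; the genuinely unavoidable use of $\mathrm B\Sigma^0_1$ is the one you state at the outset, namely bounding the $\alpha$-axiomhood witnesses of a given proof so that $\opr_\alpha A$ drops from $\Sigma^0_{1,1}$ to $\Sigma^0_1$. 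With that rerouting, the remaining bookkeeping worries you list in your last paragraph largely evaporate.
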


\begin{theorem}\label{tuinsmurf}
Suppose $\tupel{U_0,U,\alpha}$ is a strict Fefermanian representation, where $\alpha$ is $\Sigma^0_1$, 
and suppose $U$ and $U_0$ are sound.
Then, $\opr_\alpha$ satisfies the Kreisel Condition for $U$.
\end{theorem}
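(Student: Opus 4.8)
The plan is to unwind the Kreisel Condition for the predicate $\apr:=\opr_\alpha$, i.e.\ to prove $U\vdash\opr_\alpha A$ iff $U\vdash A$, and to treat the two implications separately. Write $\mathrm{Ax}(U)$ for the axiom set of $U$; the hypothesis that $\tupel{U_0,U,\alpha}$ is a strict Fefermanian representation says precisely that $B\in\mathrm{Ax}(U)$ iff $U_0\vdash\alpha(\gnum{B})$, and, as always for Fefermanian representations, $\alpha$ is $\Sigma^0_1$, so $\opr_\alpha$ is $\Sigma^0_1$ as well.

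The implication $U\vdash A\Rightarrow U\vdash\opr_\alpha A$ uses neither soundness hypothesis. A $U$-derivation of $A$ invokes only finitely many (indeed, standardly many) axioms $A_0,\dots,A_{n-1}\in\mathrm{Ax}(U)$. By strict Fefermanianness $U_0\vdash\alpha(\gnum{A_i})$ for every $i<n$, hence $U\vdash\alpha(\gnum{A_i})$ since $U_0$ is a subtheory of $U$. Then, exactly as in the proof of the first theorem of this subsection---where one glues an $\alpha$-proof together from a given derivation and from witnesses for the relevant instances of $\alpha$---one checks already in {\sf EA}, a fortiori in $U$, that under the hypotheses $\bigwedge_{i<n}\alpha(\gnum{A_i})$ the numeral of the given derivation witnesses $\opr_\alpha A$; since $n$ is standard, no $\Sigma^0_1$-collection is needed. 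Chaining the two facts gives $U\vdash\opr_\alpha A$.

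For the converse, assume $U\vdash\opr_\alpha A$. By soundness of $U$ the sentence $\opr_\alpha A$ is true, so there is a genuine $\alpha$-derivation $p$ of $A$; let $A_0,\dots,A_{n-1}$ be the non-logical axioms occurring in $p$, so that $\alpha(\gnum{A_i})$ holds in $\mathbb N$ for each $i$. Since $\alpha$ is $\Sigma^0_1$ and {\sf EA} (a subtheory of $U_0$) is $\Sigma^0_1$-complete, $U_0\vdash\alpha(\gnum{A_i})$, whence $A_i\in\mathrm{Ax}(U)$ by strict Fefermanianness. Hence $p$ is in fact a $U$-derivation of $A$, so $U\vdash A$. (Soundness of $U_0$ supplies the complementary coherence fact: together with the $\Sigma^0_1$-completeness just used it shows that the set of $B$ with $\alpha(\gnum{B})$ true in $\mathbb N$ is exactly $\mathrm{Ax}(U)$.)

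I do not anticipate a real obstacle. The two points that need a moment's care are those just flagged: first, that the $\Sigma^0_1$-complexity of $\alpha$ is precisely what lets $\Sigma^0_1$-completeness convert the semantically extracted proof into honest membership in $\mathrm{Ax}(U)$---without it the converse direction is not obviously available; and second, that the internal proof-assembly in the first implication touches only standardly many axioms and so stays inside {\sf EA}, dodging any appeal to $\Sigma^0_1$-collection, just as in the earlier argument.
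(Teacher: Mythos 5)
Your proof is correct and is essentially the paper's argument: the numerating property of $\alpha$ together with $\Sigma^0_1$-completeness of the base identifies the sentences truly satisfying $\alpha$ with the axioms of $U$, soundness of $U$ plus $\Sigma^0_1$-completeness gives the hard direction, and your syntactic assembly of an $\alpha$-proof inside $U$ for the easy direction is only a minor variant of the paper's semantic detour through the truth of $\opr_\alpha A$. One remark: your claim that $\alpha$ is always $\Sigma^0_1$ for Fefermanian representations is not part of the paper's general definition (the $\Sigma^0_2$ axiom predicates of Examples~\ref{pmp} and~\ref{mpp} are Fefermanian), but you are right that the theorem tacitly needs this hypothesis --- the $\Sigma^0_2$ predicate $\delta$ of Example~\ref{pmp} is strictly Fefermanian with sound base and lead yet fails Kreisel --- so making the assumption explicit, as you do, only sharpens the paper's own proof.
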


\begin{proof}
Since $U_0$ is sound, we have $\alpha(\gnumm{A})$ iff $U_0 \vdash \alpha(\gnum{A})$.
So, $\alpha$ truly represents the axioms of $U$. It follows that $U \vdash A$ iff $\opr_\alpha A$.
Since $U$ is sound, we find
$\opr_\alpha A$ iff $U \vdash \opr_\alpha A$. So, we may conclude
$U\vdash A$ iff $U \vdash \opr_\alpha A$.
\end{proof}

Finally, we look into the interaction of elementary axiomatizations, i.e., $\Delta_0({\sf exp})$-axiomatizations,
and  $\Sigma^0_1$-axiomatizations. We note that an elementary formula numerates the same set in every
consistent theory. This immediately gives us the following insight.
\begin{theorem}
Suppose $U_0$ and $U_1$ are consistent subtheories of $U$. Suppose further that $\tupel{U_0,U,\alpha}$
is Fefermanian, where $\alpha$ is elementary. Then, $\tupel{U_1,U,\alpha}$ is Fefermanian.
\end{theorem} 
Similarly, a $\Sigma^0_1$-formula numerates the same set in all $\Sigma^0_1$-sound theories. So, we have:

\begin{theorem}
Suppose $U_0$ and $U_1$ are $\Sigma^0_1$-sound subtheories of $U$. Suppose further that $\tupel{U_0,U,\alpha}$
is Fefermanian, where $\alpha$ is $\Sigma^0_1$. Then, $\tupel{U_1,U,\alpha}$ is Fefermanian.
\end{theorem}

We remind the reader of Craig's trick. Let $U_0$ be a $\Sigma^0_1$-sound base theory.
Suppose $\sigma$ is a $\Sigma^0_1$-formula that $U_0$-provably represents a set
of arithmetical sentences. Suppose $\sigma(x) = \exists y\, \sigma_0(y,x)$, where $\sigma_0\in \Delta_0({\sf exp})$.
We define \[\hat\sigma(x) :\iff \exists y \leq x\, \exists z\leq x \, (\sigma_0(y,z) \wedge x= {\sf conj}({\sf id}({\sf num}(y),{\sf num}(y),z))).\]
Here {\sf conj} arithmetizes forming a conjunction, {\sf id} arithmetizes forming an identity statement
from terms, {\sf num} arithmetizes  the numeral function. Clearly, $\hat\sigma$ is elementary.
As is well known, we have: 
\begin{theorem}\label{labbekaksmurf}
\begin{enumerate}[i.]
\item
${\sf EA} \vdash \forall A\, (\opr_{\hat\sigma} A \to \opr_\sigma A)$.
\item
${\sf EA}+\mathrm B\Sigma_1 \vdash \forall A\, (\opr_{\hat\sigma} A \iff \opr_\sigma A)$.
\end{enumerate}
\end{theorem}

\begin{theorem}\label{kleutersmurf}
Suppose  $\tupel{U_0,U,\sigma}$ is Fefermanian and $U_0$ is $\Sigma^0_1$-sound and $\sigma$ is $\Sigma^0_1$.
Then, $\tupel{U_0,U,\hat\sigma}$ is also Fefermanian.
\end{theorem}

\begin{proof}
Let $X$ be the set of axioms numerated by $\sigma$ in $U_0$ and let $\hat X$ be $\hat\sigma$ in $U_0$.
By the $\Sigma^0_1$-soundness of $U_0$, the set $X$ is the set of numbers for which $\sigma$ is true
and the set $\hat X$ is the set  of numbers for which $\hat \sigma$ is true.
So, by the unformalized version of Theorem~\ref{labbekaksmurf}(ii), we find that both
$X$ and $\hat X$ axiomatize the same theory, to wit $U$.
\end{proof}

\noindent
We note that it is essential that $U_0$ is $\Sigma^0_1$-sound. If it were not the Craig construction
could transform a standard axiom to a non-standard axiom. The non-standard axiom would not be
visible in the numeration. 

\subsection{Examples}
We provide a list of examples for coincidence and separation of the conditions. 
As before $\beta$ is the standard representation of the axiom of {\sf EA} and $\pi$ is
the standard representation of the axioms set of Peano Arithmetic. We will, in our examples,
 prefer {\sf EA} over {\sf PA}, $\Sigma^0_1$-predicates over more complex ones,
and sound theories over unsound ones. Only in the first examples of Example~\ref{mpp} and Example~\ref{mmp},
perhaps, improvement is possible by finding an example that works for and over {\sf EA}.

\[
\begin{tabular}{|l||c|c|c||c|c|c|} \hline
& base & lead & $P$ & L\"ob & Kreisel & Feferman \\ \hline\hline
Example~\ref{ppp} & {\sf EA} & {\sf EA} & $\Sigma^0_1$ & $+$ & $+$ & $+$ \\ \hline
Example~\ref{ppm} &{\sf EA} & {\sf EA} & $\Sigma^0_1$ & $+$ & $+$ & $-$ \\ \hline
Example~\ref{pmp} & {\sf EA} & ${\sf EA}+\opr_\beta\bot$ & $\Sigma^0_1$ & $+$ & $-$ & $+$ \\ 
  & {\sf EA} & {\sf EA} & $\Sigma^0_2$ &  &  &  \\ \hline
Example~\ref{pmm} &{\sf EA} &{\sf EA} & $\Sigma^0_1$ & $+$ & $-$ & $-$ \\ \hline
Example~\ref{mpp} &{\sf PA} & {\sf PA} & $\Sigma^0_2$ & $-$ & $+$ & $+$ \\ 
& {\sf EA} & {\sf EA} & $\Sigma^0_{1,1}$ &&& \\ \hline
Example~\ref{mpm} &{\sf EA} & {\sf EA} & $\Sigma^0_1$ & $-$ & $+$ & $-$ \\ \hline
Example~\ref{mmp} & {\sf PA} & {\sf PA} & $\Sigma^0_2$ & $-$ & $-$ & $+$ \\ 
 & {\sf EA} & ${\sf EA}+\opr_\beta\opr_\pi\bot$ & $\Sigma^0_{1,1}$ &  &  &  \\ \hline
Example~\ref{mmm} & {\sf EA} & {\sf EA} & $\Sigma^0_1$ & $-$ & $-$ & $-$ \\ \hline
\end{tabular}
\]

\noindent
We will provide and verify the promised examples in Appendix~\ref{examples}. 

  
  
  
\section{Extensions of Peano Arithmetic}\label{filo}
Let $U$ be a consistent extension of {\sf PA} and let $\alpha$ be an elementary numeration of
an axiom set $X$ of $U$ in $U$, such that $\pi\preceq \alpha$.\footnote{Our argument also works 
under the weaker assumption that $\pi\preceq_{\sf PA} \alpha$.}
We will show how to construct a $\Sigma^0_1$-predicate
$\widetilde \alpha$ that numerates the  the axioms of $U$ in $U$.
Thus $\widetilde \alpha$ will be Fefermanian for $U$ over $U$.

The concrete examples to keep in mind are the standard representation $\pi$ of the axioms of {\sf PA} and
$\pi(x) \vee x = \gnum{\opr_\pi\bot}$ representing the axioms of ${\sf PA}+\opr_\pi\bot$.

We write $\alpha_x$ for the $\alpha$-axioms $\leq x$. So, $\alpha_x(y) :\iff \alpha(y) \wedge y\leq x$.
We write $\opr_{\alpha,(x)}$ for provability from $\alpha$ by a proof $\leq x$.\footnote{I use the round brackets to distinguish the intended notion from
$\opr_{\alpha,x}$ which is used in some of the literature for $\opr_{\alpha_x}$, where $\alpha_x(y) :\iff \alpha(x) \wedge y \leq x$.} 
We will use $S,S',\dots$ as variables ranging over $\Sigma^0_1$-sentences.

A number $x$ is \emph{small}, or $\mathcal S(x)$, iff $\opr_{\alpha,(x)}$ is \emph{$\Sigma^0_1$-reflecting}.
The means that $\mathcal S(x)$ iff $\forall S\,  (\opr_{\alpha,(x)} S \to {\sf true}(S))$, where
{\sf true} is a standard $\Sigma^0_1$-truth predicate. We note that smallness does depend on the chosen $\alpha$.
We also note that, by our assumptions on the G\"odel numbering, the quantifier over $S$ can be bounded by $n$.
It follows, by $\Sigma^0_1$-collection, that, modulo {\sf PA}-provability, smallness is a $\Sigma^0_1$-predicate.
Finally, smallness is clearly downward closed.

It is consistent with $U$ that not all numbers are small, since $U$ does not prove $\Sigma^0_1$-reflection for
$\alpha$-provability. On the other hand, for every $n$, we have that $U$ proves that it is small, i.e., $U \vdash \mathcal S(\underline n)$. 
The argument looks like this. (A more general argument is given in the proof of Lemma~\ref{technischesmurf}.) 
Consider a number $n$ and suppose that $n$ is small. 

Let $k\leq n$ and $s \leq n$.
 If $s$ is a code of a $\Sigma^0_1$-sentence $S$ and if  $k$ is the G\"odel number of an
$\alpha$-proof of $S$, then we have $U \vdash S$, and, hence,
 \[U \vdash ({\sf sent}_{\Sigma_1^0}(s) \wedge {\sf proof}_{\alpha}(k,s)) \to {\sf true}(s).\]
  If $s$ is not a code of a 
 $\Sigma^0_1$-sentence or, if $s$ is a code of a $\Sigma^0_1$-sentence $S$ and
   $k$ is not the G\"odel number of an
$\alpha$-proof of $S$, then we have $U \vdash \neg\, {\sf sent}_{\Sigma^0_1}(s) \vee \neg\, {\sf proof}_\alpha(\underline k,s)$, and, hence,
again,
 $U \vdash ({\sf sent}_{\Sigma_1^0}(s) \wedge {\sf proof}_{\alpha}(\underline k,s)) \to {\sf true}(s)$.
 It follows that:
 \[U \vdash \bigwedge_{k\leq n, s\leq n}(({\sf sent}_{\Sigma_1^0}(s) \wedge {\sf proof}_{\alpha}(\underline k,s)) \to {\sf true}(s)).\]
Hence, by $U$-reasoning, $U \vdash \mathcal S(\underline n)$.
The above reasoning is so simple that it can be verified in {\sf PA}, and so 
(\dag) ${\sf PA} \vdash \forall x\, \opr_\alpha  \mathcal S(x)$.
The principle (\dag) is a typical example of an \emph{outside-big-inside-small principle}. 
Objects that are very big in the outer world are  small in the inner world.

We define the slow provability of $A$ or $\apr A$ as: $A$ is provable from small $\alpha$-axioms. So,
\begin{itemize}
\item
$\widetilde \alpha(x) :\iff \alpha(x) \wedge \mathcal S(x)$
\item
$\apr A :\iff \opr_{\widetilde \alpha}A$
\end{itemize} 

\noindent
We list two formulas that are equivalent to $\apr A$ over {\sf PA} and all provide worthy ways 
of looking at it. Let $\opr^\ast_\alpha A :\iff \exists x\, \opr_{\alpha_x}A$.
\begin{itemize}
\item
$\apr A$ iff $\exists x\, (\opr_{\alpha_x}A \wedge \mathcal S(x))$,
\item
$\apr A$ iff $\opr^\ast A < \exists x\, \neg\, \mathcal S(x)$.
\end{itemize}

\begin{remark}
Our $\apr$ is part of a family of closely related predicates. To make this visible we consider a
slight variant of our $\apr$ with the same good properties.
We write $\opr_\alpha^{\Pi^0_1}$ for provability with a $\Pi^0_1$-oracle. 
We define $\apr^\circ A$ as $\opr_\alpha^\ast A < \opr_\alpha^{\Pi^0_1}\bot$. We note that this is equivalent to
$\exists x\, (\opr_{\alpha_x}A \wedge \oco_{\alpha,(x)}^{\Pi^0_1}\top)$

This representation brings out the analogy with Feferman provability which can be defined as 
$\opr_\alpha^\ast A < \opr_\alpha^\ast\bot$ and a provability predicate studied in \cite{henk:inte17} and \cite{henk:solo16}, to wit
$\exists x\, (\opr_{\alpha_{x+1}}A \wedge \oco^{\Pi^0_1}_{\alpha_x}\top)$ or, alternatively,
$\opr_\alpha^\ast A \leq \opr_\alpha^{\ast\Pi^0_1}\bot$. 
We note that, unlike $\apr^\circ$, these predicates are not $\Sigma^0_1$.
\end{remark}

\noindent 
Suppose $A$ is in $X$. Then, $U \vdash \alpha(\gnum A)$. Since also $U \vdash \mathcal S(\gnum A)$, we find
$U \vdash \widetilde \alpha(\gnum A)$. Conversely, suppose $U \vdash \widetilde \alpha(\gnum A)$. Then,
$U \vdash \alpha(\gnum A)$ and, hence, $A \in X$.
Thus $\widetilde \alpha$ numerates $X$ in $U$.

Since $\apr$ is $\opr_{\widetilde\alpha}$ and $\widetilde\alpha$ is $\Sigma^0_1$, it follows, in {\sf PA} by $\Sigma^0_1$-collection, that
that $\apr$ is $\Sigma^0_1$. Hence, $\apr$ satisfies the L\"ob conditions.

We show that {\sf PA} verifies emission and absorption for $\apr$. By the soundness of {\sf PA}, the Kreisel Condition follows.

We first prove emission. We prove the stronger ${\sf PA}\vdash \opr_\alpha A \to \opr_\alpha \apr A$. 
We reason in {\sf PA}. Suppose $\opr_\alpha A$. Then, clearly, for some $x$, we have $\opr_{\alpha_x} A$.
Hence, $\opr_\alpha \opr_{\alpha_x}A$. Also, (\dag) gives us $\opr_\alpha\mathcal S(x)$.
So, $\opr_\alpha (\opr_{\alpha_x}A \wedge \mathcal S(x))$ and, thus, $\opr_\alpha\apr A$.
 
We prove absorption.  The proof turns out to be remarkably simple. 
We find $R$ such that ${\sf EA} \vdash R \iff  (\exists x \, \opr_{\alpha_x} A ) < \opr_\alpha R$.
 We note that  $R$ is $\Sigma^0_1$.

We reason in {\sf PA}. 
Suppose $\opr_\alpha\apr A$. We prove $\opr_\alpha A$.
We reason inside $\opr_\alpha$. Since, by assumption,  $\apr A$, 
 we have, for some $x$, (i) $\opr_{\alpha_x} A$ and (ii) $\forall S\, (\opr_{\alpha,(x)} S \to {\sf true}(S))$.
In case not $\opr_{\alpha,(x)} R$, by (i), we find $R$. If we do have $\opr_{\alpha,(x)} R$, we find $R$ by (ii).
 We leave the $\opr_\alpha$-environment.
We have shown $\opr_\alpha R$. 
It follows, (a) that for some $p$, we have $\opr_\alpha \opr_{\alpha,(p)} R$ and, by the fixed point equation 
for $R$, (b) $\opr_\alpha ((\exists x\, \opr_{\alpha_x} A) < \opr_\alpha R)$. 
Combining (a) and (b), we find $\opr_\alpha\opr_{\alpha_p} A$, and, thus,
since $U$, as axiomatized by $\alpha$, is, {\sf EA}-verifiably, essentially reflexive by our assumption that
$\pi\preceq \alpha$, we obtain  $\opr_{\alpha} A$, as desired. 
We leave {\sf PA}. We have shown ${\sf PA}  \vdash \opr_\alpha\apr A \to \opr_\alpha A$.

\medskip
What happens if we drop the assumption that $\alpha$ is elementary and work with a $\Sigma^0_1$-predicate $\sigma$?
We have the following. 

\begin{theorem}\label{sluwesmurf}
Suppose $\tupel{{\sf PA},U,\sigma}$ is Fefermanian, where $\sigma$ is $\Sigma^0_1$ and 
$\pi\preceq \sigma$. Then we can construct a $\Sigma^0_1$-predicate
$\sigma^\ast$ such that $\tupel{U,U,\sigma^\ast}$ is Fefermanian and $\opr_{\sigma^\ast}$ satisfies, in $U$, the L\"ob-conditions, the
Kreisel condition and the pair $\opr_{\sigma^\ast}$, $\opr_{\sigma}$ satisfies emission and absorption over $U$.
\end{theorem}

\begin{proof}
We take $\sigma^\ast$ to be $\widetilde{\hat\sigma}$. We note that $\tupel{{\sf PA},U,\hat\sigma}$ is Fefermanian and thus
$\opr_{\sigma^\ast}$ satisfies, in $U$, the L\"ob-conditions and the Kreisel condition. Moreover the pair  
$\opr_{\sigma^\ast}$, $\opr_{\hat\sigma}$ satisfies emission and absorption over $U$. However,
$\opr_\sigma$ and $\opr_{\hat\sigma}$ are co-extensional over {\sf PA} and, hence, \emph{a fortiori},
over $U$. So, the pair  
$\opr_{\sigma^\ast}$, $\opr_{\sigma}$ also satisfies emission and absorption over $U$
\end{proof}
  
 \begin{remark}
The arguments of this section can be extended to constructive logic. In this case we still have the representations
$\beta$ for the axiom set of $\mathrm i\hyph {\sf EA}$ and $\pi$ for the axiom set of {\sf HA}. So the whole development remains unchanged.
One just has to check that never a step was taken that is essentially classical. 

The intuitionistic development has an important point. In their paper \cite{arde:sigm18}, Mohammad Ardeshir and Mojtaba Mojtahedi characterize the
provability logic of {\sf HA} for $\Sigma^0_1$-substitutions. This is the most informative result on the provability logic of {\sf HA} at the moment of writing.
An alternative proof has been developed in \cite{viss:prov19}.
This proof uses slow provability in the style of Friedman, Rathjen {\&} Weiermann for {\sf HA}. The proof works because only a restricted version 
of the absorption law is needed. The validity of the full absorption law is plausible but not proved. Replacement by of 
Friedman-Rathjen-Weiermann slow provability by slow provability in the
style of the present paper (as suggested by Fedor Pakhomov) does give us full absorption.

We show that we get a strengthened version of absorption in the case of {\sf HA}. The proof is intended for readers with
some background in the metamathematics of constructive arithmetical theories.

\begin{theorem}
$\mathrm i \hyph {\sf EA} \vdash \opr_\pi (A\vee B) \iff \opr_\pi(A \vee \opr_{\widetilde \pi} B)$.
\end{theorem}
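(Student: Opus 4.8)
The statement is the disjunctive refinement of the internal Kreisel principle $\opr_\pi B \iff \opr_\pi\opr_{\widetilde\pi}B$ for the slow predicate $\opr_{\widetilde\pi}$ over {\sf HA}. The undisjuncted version is already available in $\mathrm i\hyph{\sf EA}$: emission $\opr_\pi B\to\opr_\pi\opr_{\widetilde\pi}B$ is the constructive form of Lemma~\ref{stoeresmurf} together with principle~(b) (verified by {\sf EA}, cf.\ Lemma~\ref{krokosmurf}), and absorption $\opr_\pi\opr_{\widetilde\pi}B\to\opr_\pi B$ is the constructive form of Lemma~\ref{grotesmurf} together with the essential reflexivity of {\sf HA}, which delivers principle~(a) even in the strong form $\mathrm i\hyph{\sf EA}\vdash\forall x\,\opr_\pi(\opr_{\pi_x}B\to B)$ (the Remark after Lemma~\ref{grotesmurf}). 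The plan is therefore to thread each of these two arguments through the disjunction $A\vee(\,\cdot\,)$.

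For the harder implication $\opr_\pi(A\vee\opr_{\widetilde\pi}B)\to\opr_\pi(A\vee B)$ — disjunctive absorption — I would imitate Lemma~\ref{grotesmurf}. Take a $\Sigma^0_1$-sentence $R$ with $\mathrm i\hyph{\sf EA}\vdash R\iff (\exists z\,\opr_{\pi_z}B) < \opr_\pi(A\vee R)$, the witness comparison concerning only the displayed outer existential quantifiers, and first establish $\mathrm i\hyph{\sf EA}\vdash\opr_\pi\bigl((A\vee\opr_{\widetilde\pi}B)\to(A\vee R)\bigr)$. One reasons inside $\opr_\pi$: under the left disjunct there is nothing to do; under the right disjunct one has a $z$ with $\opr_{\pi_z}B$ and $\mathcal S(z)$, and then either $\neg\,\opr_{\pi,(z)}(A\vee R)$, in which case this very $z$ witnesses $R$, or $\opr_{\pi,(z)}(A\vee R)$, in which case one recovers $A\vee R$ from $\mathcal S(z)$ and the exponentiation-free direction ${\sf true}(R)\to R$; the case split is constructively legitimate since $\opr_{\pi,(z)}(A\vee R)$ is decidable (exactly as in the Remark on $\mathrm i\hyph{\sf EA}$ after Lemma~\ref{grotesmurf}). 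Granting this, $\opr_\pi(A\vee\opr_{\widetilde\pi}B)$ yields $\opr_\pi(A\vee R)$, and one bootstraps as in the last paragraph of the proof of Lemma~\ref{grotesmurf}: an honest $\pi$-proof of $A\vee R$ has some code $n$, so $\opr_\pi\opr_{\pi,(\underline n)}(A\vee R)$; combining this inside $\opr_\pi$ with the fixed-point equation for $R$ and the upward persistence of $\opr_{\pi_z}$ in $z$ forces the $R$-branch witness below $n$, whence $\opr_\pi(A\vee\opr_{\pi_{\underline n}}B)$; and $\opr_\pi(\opr_{\pi_{\underline n}}B\to B)$, the $n$-instance of the strong reflexivity principle, then gives $\opr_\pi(A\vee B)$.

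The implication $\opr_\pi(A\vee B)\to\opr_\pi(A\vee\opr_{\widetilde\pi}B)$ is the emission side and is more direct. One reasons inside $\opr_\pi$ from $A\vee B$: the left disjunct is trivial, and for the right disjunct one invokes $\opr_\pi(B\to\opr_{\widetilde\pi}B)$, which is principle~(D) (Lemma~\ref{minismurf}) when $B$ is $\Sigma^0_1$, so that $\opr_\pi\bigl((A\vee B)\to(A\vee\opr_{\widetilde\pi}B)\bigr)$, and applies this to the hypothesis. For $B$ of higher complexity one instead applies emission to $A\vee B$ as a whole and uses that, over {\sf HA}, the slow predicate $\opr_{\widetilde\pi}$ verifiably inherits the disjunction/existence properties of {\sf HA}; this is the genuinely constructive ingredient, and it is exactly what fails for {\sf PA}, as the separating examples of the appendix show.

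The main obstacle is the second subcase of the inner step of the absorption direction: $\mathcal S(z)$ only controls $\Sigma^0_1$-sentences, whereas $A\vee R$ need not be $\Sigma^0_1$. When $A$ (and $B$) are $\Sigma^0_1$ — the situation relevant to the provability logic of {\sf HA} for $\Sigma^0_1$-substitutions — $A\vee R$ is $\Sigma^0_1$ and the step is immediate. In general one must either compare against $\opr_\pi R$ instead of $\opr_\pi(A\vee R)$ in the fixed point (which makes the inner step transparent but complicates the bootstrap, since one then only produces a $\pi$-proof of $A\vee R$, not of $R$) or supplement the argument with the {\sf HA}-provable disjunction/existence properties of $\opr_{\widetilde\pi}$. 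Verifying that the whole development stays within $\mathrm i\hyph{\sf EA}$ and uses no classical case split — the bounded proof predicates $\opr_{\pi,(z)}$ being decidable and $\mathcal S$ being of the form $\exists z\,(\Delta_0({\sf exp}))$ — is then routine, exactly as in the Remarks following Lemma~\ref{grotesmurf}.
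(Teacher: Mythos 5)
Your proposal has a genuine gap in each direction, and both trace back to missing the single ingredient the paper's proof actually turns on. For the emission direction $\opr_\pi(A\vee B)\to\opr_\pi(A\vee\opr_{\widetilde\pi}B)$ with $B$ of unrestricted complexity, reasoning inside $\opr_\pi$ and invoking {\sf HB} only works for $\Sigma^0_1$-$B$, as you note; your fallback --- emission applied to $A\vee B$ as a whole plus a ``verifiable disjunction property'' of $\opr_{\widetilde\pi}$ --- does not close the gap, because from $\opr_\pi\opr_{\widetilde\pi}(A\vee B)$ even a formalized disjunction property would only yield $\opr_\pi(\opr_{\widetilde\pi}A\vee\opr_{\widetilde\pi}B)$, and recovering the \emph{unboxed} left disjunct $A$ would then require the reflection $\opr_\pi(\opr_{\widetilde\pi}A\to A)$, which is not available. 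What is actually needed, and what the paper uses, is the formalized q-realizability (or de Jongh translation) fact
\[\mathrm i\hyph{\sf EA}\vdash \opr_\pi(A\vee B)\to\exists x\,\opr_\pi(A\vee\opr_{\pi_x}B),\]
in which the left disjunct stays unboxed and the right disjunct acquires a \emph{bounded} subtheory; Lemma~\ref{technischesmurf} (i.e., $\opr_\pi\mathcal S(x)$ for all $x$, as packaged in Lemma~\ref{stoeresmurf}) then upgrades $\opr_{\pi_x}B$ to $\opr_{\widetilde\pi}B$ inside the box. This is the genuinely constructive step, and it is nowhere in your argument in usable form.

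For the absorption direction you build a new Rosser-style fixed point $R\iff(\exists z\,\opr_{\pi_z}B)<\opr_\pi(A\vee R)$ and then, as you candidly observe, the inner case split breaks down because $\mathcal S(z)$ only governs $\Sigma^0_1$-sentences while $A\vee R$ need not be $\Sigma^0_1$; neither of your two proposed repairs is carried out, and the second again leans on an unproven disjunction property for $\opr_{\widetilde\pi}$. The paper needs no new fixed point here at all: given the left-to-right direction, apply it to $\opr_\pi(\opr_{\widetilde\pi}B\vee A)$ to obtain $\opr_\pi(\opr_{\widetilde\pi}B\vee\opr_{\widetilde\pi}A)$, hence $\opr_\pi\opr_{\widetilde\pi}(A\vee B)$ by monotonicity of $\opr_{\widetilde\pi}$, and then invoke the already established plain absorption law of Lemma~\ref{grotesmurf}. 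So the whole theorem reduces to the displayed realizability fact plus results already in hand; the disjunctive Rosser argument you attempt is both harder than necessary and, as it stands, incomplete.
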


\begin{proof}
We reason in i-{\sf EA}.

Suppose $\opr_\pi (A\vee B)$. It follows by either q-realizability or the de Jongh translation that, for some $x$, we have
$\opr_\pi(A \vee \opr_{\pi_x} B)$. From this, we may conclude $\opr_\pi (A \vee \opr_{\widetilde \pi} B)$.

Conversely, suppose $\opr_\pi (A \vee \opr_{\widetilde \pi} B)$. By the left-to-right case (with change of variables),  we have 
$\opr_\pi (\opr_{\widetilde \pi} A \vee \opr_{\widetilde \pi} B)$.
Hence, $\opr_\pi \opr_{\widetilde \pi} (A \vee B)$. So, by absorption, $\opr_\pi(A \vee B)$.
\end{proof}
\noindent 
Thus, the alternative predicates that satisfy the absorption law also have a rich constructive life. 
\end{remark}
 
 \section{The Abstract Construction}\label{baarg}
 In this section we present a construction that builds an appropriate $\apr$ from a given predicate 
 $\theta$ that satisfies certain good properties. 
 
 As before the variables $S,S',\dots$ will range over (codes of) $\Sigma^0_1$-sentences.
 
 \subsection{The Argument}
 Let $U$ be a theory. Suppose $\alpha(x)$ is an elementary predicate that numerates the axioms of $U$ in $U$.
 Let $\theta(y,z)$ be a $\Sigma^0_1$ binary predicate. We demand that $\theta$ is {\sf EA}-verifiably, upwards persistent in $y$, i.e., we 
 assume  that \[{\sf EA} \vdash (\theta(y,z) \wedge y < y') \to \theta(y',z).\]
 Let  $\dpr_{\theta,y}A$ be $\theta(y,\gnum{A})$. 
 We write $\dpr_yA$ as long as $\theta$ is given in the context.
 
 As a heuristic, the reader may think of $\dpr_{\theta,y}A$ as a generalization of $\opr_{\alpha_y}A$ as we used it
 in Section~\ref{filo}, studying the
 case where $\pi\preceq \alpha$.
 
 We define:
\begin{itemize}
\item
{\sf true} is the $\Sigma^0_1$-truth predicate, which is of the form  $\exists y\,{\sf true}_0(y,x)$, where ${\sf true}_0$ is
$\Delta_0({\sf exp})$. We write ${\sf true}^z(x)$ for $\exists y \leq z\,{\sf true}_0(y,x)$.
\item
$\opr_{\alpha,(x)} A :\iff \exists p\leq x \,{\sf proof}_\alpha(p,\gnum{A})$, where {\sf proof} is the standard
arithmetization of the proof predicate.
\item
$\mathcal S(x) :\iff \exists z\, \forall S \leq x\, (\opr_{\alpha,(x)} S \to {\sf true}^z(S))$.
Here the variable `$S$' ranges over $\Sigma^0_1$-sentences.
\item
$\apr_\theta A : \iff  \exists x (\dpr_{\theta,x} A \wedge \mathcal S(x))$. 
We will usually write $\apr$ for $\apr_\theta$ suppressing the contextually given $\theta$.
We note that modulo some rewriting $\apr_\theta$ is $\Sigma^0_1$.
\end{itemize}
The definition of $\apr_\theta$ is in essence due to Fedor Pakhomov.

As explained in Subsection~\ref{arithm}, we assume that we have a reasonable coding of proofs in which the code
of the proof is larger than the code of the conclusion. We fix, for the moment $\theta$ in the background. 

We note that our definition of $\mathcal S$ is slightly different from the one in Section~\ref{filo}. This is just to compensate
for the lack of $\Sigma^0_1$-collection.
We have:
\[
\begin{tabular}{lrrcll}
& ${\sf EA}$ & $\vdash$ & $\mathcal S(x)$ & $\to$ & $\forall S\, (\opr_{\alpha,(x)} S \to {\sf true}(S))$\\
& ${\sf EA} + \mathrm{B}\Sigma_1$ &  $\vdash$ & $\mathcal S(x)$ & $\iff$ & $\forall S\, (\opr_{\alpha,(x)} S \to {\sf true}(S))$\\
(\dag) & ${\sf EA}$ &   $\vdash$ & $\apr A$  & $\to$ &  $\exists x\, (\dpr_x A \wedge \forall S\, (\opr_{\alpha,(x)} S \to {\sf true}(S)))$\\
& ${\sf EA} + \mathrm{B}\Sigma_1$ &  $\vdash$ & $ \apr A$ & $\iff$ &  $\exists x\, (\dpr_x A \wedge \forall S\, (\opr_{\alpha,(x)} S \to {\sf true}(S)))$
\end{tabular}
\]

\noindent 
We note that we can write the right-hand-side of (\dag) as: \[ (\exists x\, \dpr_x A) < (\exists x\, \exists S\, (\opr_{\alpha,(x)} S \wedge \neg\, {\sf true}(S))).\]
Here the witness comparison is only concerned with the outer quantifiers.

\begin{lemma}\label{grotesmurf}
 ${\sf EA} + \forall x\, (\opr_\alpha\dpr_x A \to \opr_\alpha A) \vdash \opr_\alpha\apr A \to \opr_\alpha A$.
\end{lemma}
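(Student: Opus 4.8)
The plan is to reason inside the base theory ${\sf EA}$, augmented with the hypothesis $\forall x\,(\opr_\alpha\dpr_x A \to \opr_\alpha A)$, and to unwind the definition of $\apr A$ one layer at a time. So first I would assume $\opr_\alpha \apr A$, i.e.\ that there is an $\alpha$-proof $q$ of the sentence $\apr A$. Recalling that $\apr A$ is the $\Sigma^0_1$-sentence $\exists x\,(\dpr_x A \wedge \mathcal S(x))$, and that $\mathcal S(x)$ unwinds (via ($\dag$)) to $\forall S\,(\opr_{\alpha,(x)}S \to {\sf true}(S))$ on the left-hand side, the key move is to show that from $\opr_\alpha \apr A$ one can actually \emph{extract} a concrete numerical witness $x$ with $\opr_\alpha \dpr_x A$. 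Then the hypothesis $\forall x\,(\opr_\alpha\dpr_x A \to \opr_\alpha A)$ delivers $\opr_\alpha A$ and we are done.

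The crux is the witness-extraction step, and this is where the witness-comparison reformulation of ($\dag$) does the work. Write $\apr A$ as $(\exists x\,\dpr_xA) < (\exists x\,\exists S\,(\opr_{\alpha,(x)}S \wedge \neg\,{\sf true}(S)))$. Reasoning in ${\sf EA}$: suppose $q$ is an $\alpha$-proof of $\apr A$. Consider the bound $n := q$ (or any fixed bound at least as large as the Gödel number of the proof $q$; since codes of conclusions are below codes of proofs, $\gnum{\apr A} < q$). I claim $\opr_{\alpha,(n)}\apr A$ holds — this is immediate since $q \le n$ is an $\alpha$-proof of $\apr A$. Now I split into two cases on whether $\exists x \le n\, \exists S \le n\,(\opr_{\alpha,(n)}S \wedge \neg\,{\sf true}(S))$. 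If some such "false provable $\Sigma^0_1$-sentence below $n$" exists, then in particular $\opr_{\alpha,(n)}$ (hence $\opr_\alpha$) proves a false $\Sigma^0_1$-sentence; by $\Sigma^0_1$-completeness inside ${\sf EA}$ one then derives $\opr_\alpha\bot$, and hence $\opr_\alpha A$ trivially. If no such sentence exists below $n$, then the left disjunct of the witness comparison must fire at a witness $\le n$: since $q \le n$ witnesses the $\Sigma^0_1$-sentence $\apr A$, and $\apr A$ is itself a true-at-level-$n$ statement (here I use that ${\sf true}^n$ reflects provability-at-level-$n$), we get that $\dpr_x A$ holds for some $x \le n$, and moreover $\opr_{\alpha,(n)}\dpr_x A$ — i.e.\ $\opr_\alpha\dpr_x A$ for this concrete $x$. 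Applying the hypothesis gives $\opr_\alpha A$.

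The main obstacle I anticipate is getting the bookkeeping of the nested provability operators exactly right: the outer $\opr_\alpha$ applied to $\apr A$ means an $\alpha$-proof of "$\exists x(\dpr_x A \wedge \mathcal S(x))$", and I need to convert the \emph{existence} of such a proof into an \emph{actual numerical witness} $x$ together with a proof (again at bounded length) of the single formula $\dpr_x A$. The delicate point is that $\apr A$ is $\Sigma^0_1$ but not $\Delta_0$, so reflecting "$\opr_\alpha \apr A$ implies $\apr A$" is not free — it needs the upward persistence of $\theta$ in $y$ (to replace the inner bound coming from the proof $q$ by the overall bound $n$) and the {\sf EA}-verifiable $\Sigma^0_1$-completeness used to reflect ${\sf true}^n$. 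Once one fixes the single bound $n$ large enough to dominate $q$, $\gnum{\apr A}$, and the relevant subformulas, the monotonicity of ${\sf proof}$, the upward persistence of $\theta$, and the monotonicity of ${\sf true}^z$ in $z$ all conspire to keep every witness below $n$, and the witness-comparison clause in the definition of $\apr$ is precisely what guarantees that the "$\dpr_x A$" side wins unless ${\sf prov}_\alpha$ has already proved something false — in which case $\opr_\alpha A$ holds anyway. Formalizing all of this in ${\sf EA}$ (rather than in ${\sf EA}+\mathrm{B}\Sigma_1$) is the reason the statement uses the one-directional ($\dag$) rather than the biconditional, and care is needed not to invoke collection.
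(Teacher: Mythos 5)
There is a genuine gap, and it sits exactly where the real difficulty of the lemma lies. In your first case you assume that some $\Sigma^0_1$-sentence $S$ with $\opr_{\alpha,(n)}S$ and $\neg\,{\sf true}(S)$ exists, and you claim that ``by $\Sigma^0_1$-completeness'' this yields $\opr_\alpha\bot$. That inference is invalid: $\Sigma^0_1$-completeness gives $\opr_\alpha$ of \emph{true} $\Sigma^0_1$-sentences, and to turn $\neg\,{\sf true}(S)$ into $\opr_\alpha\neg\,S$ you would need $\Pi^0_1$-completeness, which fails. A consistent theory can perfectly well prove a false $\Sigma^0_1$-sentence --- e.g.\ ${\sf EA}+\opr_\beta\bot$ --- and handling precisely such unsound theories is the stated point of the paper. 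So in that case your argument produces neither $\opr_\alpha\bot$ nor any other route to $\opr_\alpha A$, and I see no way to repair it within your framework: the obstruction is that $\opr_\alpha\apr A$ gives no reflection and hence no externally visible witness $x$, and the falsity of $S$ is a $\Pi^0_1$ fact that cannot be imported into the $\opr_\alpha$-environment. (Your second case is essentially fine: if no provable-below-$n$ $\Sigma^0_1$-sentence of code $\leq n$ is false, then $\apr A$ itself, having code and proof $\leq n$, is true, and provable $\Sigma^0_1$-completeness applied to $\dpr_x A$ finishes the job.)

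The paper's proof supplies exactly the missing idea, and it is self-referential where you tried to be fixed-point-free. One takes a Rosser-style sentence $R$ with ${\sf EA}\vdash R \iff (\exists x\,\dpr_x A) < \opr_\alpha R$. Reasoning inside $\opr_\alpha$ under the assumption $\apr A$, one gets $\dpr_x A$ together with $\mathcal S(x)$ and derives $R$ by cases: if there is no $\alpha$-proof of $R$ below $x$, the left side of the comparison wins; if there is one, then $R$ is a $\Sigma^0_1$-sentence provable below $x$, so $\mathcal S(x)$ makes it true. Hence $\opr_\alpha R$ holds \emph{externally}, which produces an actual proof $p$ of $R$; then $\opr_\alpha\opr_{\alpha,(p)}R$ together with the provable fixed-point equation forces, inside $\opr_\alpha$, the witness of $\exists x\,\dpr_x A$ to lie below $p$, giving $\opr_\alpha\dpr_p A$ by upward persistence and then $\opr_\alpha A$ by the hypothesis. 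The role of $R$ is precisely to manufacture the concrete external bound that your direct witness-extraction cannot obtain without assuming soundness.
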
 

\begin{proof}
We find $R$ such that ${\sf EA} \vdash R \iff  (\exists x \, \dpr_x A ) < \opr_\alpha R$.
 We note that  $R$ is $\Sigma^0_1$.

We reason in ${\sf EA} + \forall x\, (\opr_\alpha \dpr_x A \to \opr_\alpha A)$. Suppose $\opr_\alpha\apr A$. We prove $\opr_\alpha A$.

We reason inside $\opr_\alpha$. Since, by assumption,  $\apr A$, 
 we have, for some $x$, (i) $\dpr_x A$ and (ii) $\forall S\leq x\, (\opr_{\alpha,(x)} S \to {\sf true}(S))$.
In case not $\opr_{\alpha,(x)} R$, by (i), we find $R$. If we do have $\opr_{\alpha,(x)} R$, we find $R$ by (ii).\footnote{We
note that, in this step, we use  that
(the code of) the conclusion is smaller than (the code of) the proof.}
 We leave the $\opr_\alpha$-environment.

We have shown $\opr_\alpha R$. 
It follows, (a) that for some $p$, we have $\opr_\alpha \opr_{\alpha,(p)} R$ and, by the fixed point 
equation for $R$, (b) $\opr_\alpha ((\exists x\, \dpr_x A) < \opr_\alpha R)$. 
Combining (a) and (b) and the upward persistence of $\dpr_x$, we find $\opr_\alpha\dpr_{p} A$, and, thus,  $\opr_\alpha A$, as desired. 
We leave ${\sf EA} + \forall x\, (\opr_\alpha \dpr_x A \to \opr_\alpha A)$.

We have shown ${\sf EA} + \forall x\, (\opr_\alpha \dpr_x A \to \opr_\alpha A) \vdash \opr_\alpha\apr A \to \opr_\alpha A$.
\end{proof}

\noindent The proof of Lemma~\ref{grotesmurf} deserves a few comments.

\begin{remark}
We note that the argument also works when we define $\apr A$ as $ \exists x\, (\dpr_x A \wedge \forall S\, (\opr_{\alpha,(x)} S \to {\sf true}(S)))$.
The argument does not use that $\apr$ is $\Sigma^0_1$.
\end{remark}

\begin{remark}
In all applications of Lemma~\ref{grotesmurf}, {\sf EA} verifies not just the principle $\forall x\, (\opr_\alpha\dpr_x A \to \opr_\alpha A)$ for the concrete
choice of $\dpr$ of the application, but the stronger  $\forall x\, \opr_\alpha(\dpr_x A \to  A)$. We note that using this last condition, we may obtain
the theorem without the demand that $\dpr_y$ is upward persistent in $y$. In
${\sf EA}+\forall x\, \opr_\alpha(\dpr_x A \to  A)$, we can go from $\opr_\alpha \opr_{\alpha,(p)} R$ and $\opr_\alpha ((\exists x\, \dpr_x A) < \opr_\alpha R)$
to $\opr_\alpha\bigvee_{z<p} \dpr_z A$, and, hence, $\opr_\alpha A$.
\end{remark}

\begin{remark}
The proof of Lemma~\ref{grotesmurf} does not use exponentiation and would work in ${\sf S}^1_2$. The reason is that we only use
${\sf true}(R) \to R$, which is the direction of ${\sf true}(R) \iff R$ that does not require exponentiation.
\end{remark}

\begin{remark}
Let $\mathrm i\hyph{\sf EA}$ be the constructive version of {\sf EA}. Let  $U$ be a constructive theory that extends
$\mathrm i\hyph{\sf EA}$. Suppose $\mathrm i\hyph{\sf EA} \vdash (\theta(y,z) \wedge y < y') \to \theta(y',z)$.

Then, inspection shows that the entire proof of Lemma~\ref{grotesmurf} also works when we substitute $\mathrm i\hyph{\sf EA}$
for {\sf EA}. This uses the basic insight that $\opr_{\alpha,(x)}R$ is decidable in $\mathrm i\hyph{\sf EA}$. So the case-splitting in the
proof can be constructively justified.

Thus, we find  $\mathrm i\hyph{\sf EA} + \forall x\, (\opr_\alpha\dpr_x A \to \opr_\alpha A) \vdash \opr_\alpha\apr A \to \opr_\alpha A$.
\end{remark}

\noindent We prove the outside-big-inside-small lemma for $\mathcal S$ as a notion of smallness in {\sf EA}.
The proof has to be a bit more elaborate that in the luxurious case where we had full {\sf PA} to work with.

\begin{lemma}\label{technischesmurf}
${\sf EA} \vdash \forall x\, \opr_\alpha \mathcal S(x)$.
\end{lemma}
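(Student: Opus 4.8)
The plan is to prove $\mathcal S(x)$ inside $\opr_\alpha$ by exhibiting, provably in {\sf EA}, a uniform bound $z$ such that every $\Sigma^0_1$-sentence $S\leq x$ with an $\alpha$-proof of code $\leq x$ satisfies ${\sf true}^z(S)$. The point is that the data that makes $\mathcal S(x)$ true --- namely the finitely many $\alpha$-proofs of code $\leq x$ together with their conclusions --- can all be extracted by a bounded search below $x$, and {\sf EA} can carry out this search and verify the outcome; it then only needs to \emph{formalize} the resulting verification inside $\opr_\alpha$. Concretely, reason in {\sf EA} and fix $x$. By $\Delta_0({\sf exp})$-definability of ${\sf proof}_\alpha$ and bounded-quantifier handling, {\sf EA} can form the finite set $T := \{\, S \leq x : \opr_{\alpha,(x)} S \,\}$ and, since each such $S$ really has an $\alpha$-proof of code $\leq x$, {\sf EA} proves $\opr_\alpha S$ for each $S\in T$ --- indeed, provably so, because the witnessing proof is literally named by a numeral below $x$. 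Now for a $\Sigma^0_1$-sentence $S$ that {\sf EA} proves provable in $\alpha$, if $S$ is true then it is $\Sigma^0_1$-true, and {\sf EA} can pick a witness; but we want this \emph{inside} $\opr_\alpha$.

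The key step is therefore the following shift: work inside $\opr_\alpha$. There, by provable $\Sigma^0_1$-completeness of $\opr_\alpha$ applied to the (true, bounded) fact ``$\opr_{\alpha,(x)} S$'', we get $\opr_\alpha S$ for each of the relevant $S$; more usefully, we get that the (standardly finite, but internally bounded-by-$x$) list of conclusions of $\alpha$-proofs below $x$ is available, and each member $S$ of that list is $\alpha$-provable, hence --- and this is where we use that $\opr_\alpha$ reasons about a fixed theory $U$ --- consistency is not needed: we only need that {\sf EA}, inside $\opr_\alpha$, can see that a $\Sigma^0_1$-sentence proved by a named proof below $x$ is true. Since inside $\opr_\alpha$ we are reasoning in (an extension of) {\sf EA} which proves the soundness of its own provability for $\Sigma^0_1$-sentences --- concretely, $\opr_\alpha S \to S$ is \emph{not} available, but $\opr_{\alpha,(x)}S$ for a concrete bound $x$ unpacks to an actual $\alpha$-proof, and local $\Sigma^0_1$-reflection for proofs of bounded code \emph{is} {\sf EA}-provable --- we obtain, inside $\opr_\alpha$, that each such true $S$ has a $\Sigma^0_1$-witness. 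Collecting these finitely many witnesses (the collection is over a set bounded by $x$, and the number of them is also bounded by $x$, so no $\Sigma^0_1$-collection is needed: we literally take the maximum of the witnesses produced by the bounded search, all of which are {\sf EA}-definable from $x$) gives a single $z$ with ${\sf true}^z(S)$ for all $S\leq x$ with $\opr_{\alpha,(x)}S$. Hence $\mathcal S(x)$ holds inside $\opr_\alpha$, i.e. $\opr_\alpha\mathcal S(x)$, and since all of this was done provably in {\sf EA} uniformly in $x$, we conclude ${\sf EA}\vdash \forall x\,\opr_\alpha\mathcal S(x)$.

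The main obstacle I expect is the bookkeeping around producing the bound $z$ \emph{without} $\Sigma^0_1$-collection and \emph{inside} the box. The subtlety is that $\mathcal S(x)$ quantifies ``$\exists z\,\forall S\leq x\,(\opr_{\alpha,(x)}S \to {\sf true}^z(S))$'', and naively one would pass from ``each $S$ has some witness'' to ``there is a uniform $z$'' via collection --- which is exactly what {\sf EA} lacks. The fix, which must be made precise, is that the witness for $S$ is not merely asserted to exist but is \emph{computed}: given the $\alpha$-proof $p\leq x$ of $S$, {\sf EA} can, by an explicit $\Delta_0({\sf exp})$-recursion along $p$, extract a concrete $\Sigma^0_1$-witness for $S$ (this is the content of the {\sf EA}-provable local reflection for a \emph{named} proof, and it is where ${\sf exp}$ is genuinely used, via $\beta$ and the finite axiomatization); the maximum over the $\leq x$-many such witnesses is then an honest {\sf EA}-definable function of $x$, so the bound $z$ exists by $\Delta_0({\sf exp})$-comprehension/bounded recursion rather than by collection. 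Formalizing this extraction inside $\opr_\alpha$ is routine once one observes that $\opr_\alpha$ proves all true bounded facts about $\alpha$-proofs of bounded code, which is the $\Sigma^0_1$-completeness already in play throughout the paper.
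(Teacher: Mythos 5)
There is a genuine gap, and it sits exactly where you yourself locate the main obstacle: the production of the uniform bound $z$. Your fix is to claim that the truth-witness of each relevant $S$ is \emph{computed} from its $\alpha$-proof $p\leq x$ ``by an explicit $\Delta_0({\sf exp})$-recursion along $p$'', so that the maximum of these witnesses is an {\sf EA}-definable function of $x$. No such extraction exists: the least witness of a $\Sigma^0_1$-theorem of $U$ is not bounded by any $\Delta_0({\sf exp})$ (nor any {\sf EA}-provably total) function of the proof code --- a short $U$-proof can establish a $\Sigma^0_1$-sentence whose least witness is a value of a fast-growing function, and an {\sf EA}-verifiable witness bound would amount to {\sf EA}-provable $\Sigma^0_1$-reflection for $U$, contradicting the Second Incompleteness Theorem. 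There is a second problem in your external framing: for unsound $U$ (which the paper explicitly covers) some $S$ with $\opr_{\alpha,(x)}S$ may simply be \emph{false}, so {\sf EA} cannot ``carry out this search and verify the outcome'' outside the box; the sentences in your set $T$ are true only \emph{inside} $\opr_\alpha$, via bounded reflection $\opr_\alpha(\opr_{\alpha,(x)}S\to S)$ (cf.\ Lemma~\ref{vrolijkesmurf}). And once inside the box, the passage from ``each $S\in T$ satisfies ${\sf true}(S)$'' to ``$\exists z\,\forall S\in T\;{\sf true}^z(S)$'' is precisely an instance of $\Sigma^0_1$-collection in $U$, which $U$ need not prove.

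The missing idea is the paper's: prove $\opr_\alpha\mathcal S(x)$ by \emph{external} induction on $x$ in {\sf EA}. Passing from $x$ to $x+1$ adds at most one new proof code, namely $x+1$ itself, hence at most one new sentence $S^\ast$; inside the box one combines the old uniform bound $z_0$ with the single new witness $z_1$ of ${\sf true}(S^\ast)$ --- obtained from the actual proof $x+1$ of $S^\ast$ together with the (p-time, {\sf EA}-provable) truth lemma $S^\ast\to{\sf true}(S^\ast)$ --- by taking ${\sf max}(z_0,z_1)$. This handles the $\exists z$ one increment at a time, so neither collection nor any computable bound on truth-witnesses is ever needed. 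Finally, since the induction formula $\opr_\alpha\mathcal S(x)$ is $\Sigma^0_1$ while {\sf EA} only has $\Delta_0({\sf exp})$-induction, one must also supply an explicit multi-exponential bound (of the form $2_{\underline k}^{2x+1}$) on the witnessing $\alpha$-proof to make the induction legitimate; your proposal does not address this point either.
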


\begin{proof}
We work in {\sf EA}. We prove the desired result by induction on $x$. We need a multi-exponential bound for
the $\opr_\alpha$-proofs. We will extract the desired bound by inspecting the induction step.

The base case is trivial since there will be no $S \leq 0$. The proof witnessing the base will be given by a
standard number $\underline n$. 

We turn to the induction step. Suppose $p_0$ witnesses   $\opr_\alpha\mathcal S(x)$.
We have two possibilities: ${\sf proof}_\alpha(x+1,S^\ast)$, for some $S^\ast \leq x+1$, or $\neg\, {\sf proof}_\alpha(x+1,S^\ast)$, for all $S^\ast\leq x+1$.

Suppose ${\sf proof}_\alpha(x+1,S^\ast)$.
 Inspecting the proof of the truth-lemma for ${\sf true}$ in \cite[Ch V, Section 5b, pp361--366]{haje:meta91}, we obtain 
 a proof code $p_1$ such that ${\sf proof}_\alpha(p_1,S^\ast \to {\sf true}(S^\ast))$. The transformation $S^\ast \mapsto p_1$ is p-time.
 By \cite[Ch III, Lemma 3.14, p175]{haje:meta91},  we obtain an $\alpha$-proof $p_2$ of  ${\sf proof}_\alpha(x+1,S^\ast)$.
 The transformation $x+1 \mapsto p_2$ is of order $2_{\underline k}^{x+1}$, where $k$ is standard and the subscript $\underline k$ 
 indicates the number of iterations of exponentiation. 
 Working inside $\opr_\alpha$ we can put these facts together to obtain 
 \[\text{(a) } \mathcal S(x)\text{, (b) }{\sf proof}_\alpha(x+1,S^\ast)\text{ and (c) }{\sf true}(S^\ast).\]
 Let $z_0$ be the witness of (a), let $z_1$ be the witness of (c). Then, it is easily seen that $z := {\sf max}(z_0,z_1)$ witnesses
  $\mathcal S(x+1)$.
  
  Suppose $\forall S^\ast\leq x+1 \, \neg\, {\sf proof}_\alpha(x+1,S^\ast)$. By \cite[Ch III, Lemma 3.14, p175]{haje:meta91}, we may find an $\alpha$-proof $p_3$ 
  of $\forall S^\ast\leq x+1 \, \neg\, {\sf proof}_\alpha(x+1,S^\ast)$ 
  where the transformation $x+1 \mapsto p_3$ is of order $2_{\underline k}^{x+1}$. Using \[\text{(d) }\forall S^\ast\leq x+1 \, \neg\, {\sf proof}_\alpha(x+1,S^\ast)\]
  inside $\opr_\alpha$, we easily find the desired proof of $\mathcal S(x+1)$.
  
  We note that apart from a bit of overhead we extend $p_0$ with at most two proofs that are estimated by $2_{\underline k}^{x+1}$. So,
  the resulting proof will be of order $p_0\times (2_{\underline k}^{x+1})^2$. Thus, after all is said and done, the proof we obtain of
  $\mathcal S(x+1)$ will be estimated by $\underline n \times (2_{\underline k}^{x+1})^{2(x+1)} =
  2^{2_{\underline k -1}^{x+1}\times 2(x+1)}
  \leq  2_{\underline {k}}^{2x+1}$, assuming that $k \geq 2$.
\end{proof}

\begin{lemma}\label{stoeresmurf}
 ${\sf EA}  \vdash  \forall x\, \opr_\alpha(\dpr_x A \to \apr A)$. Hence, 
 \[{\sf EA} + (\opr_\alpha A \to \exists x\, \opr_\alpha \dpr_x A) \vdash \opr_\alpha A \to \opr_\alpha \apr A.\]
\end{lemma}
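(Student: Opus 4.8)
The plan is to establish the two displayed assertions in turn; essentially all of the genuine work has already been done in Lemma~\ref{technischesmurf}, and what remains is bookkeeping. For the first assertion, ${\sf EA}\vdash \forall x\, \opr_\alpha(\dpr_x A \to \apr A)$, I would unfold $\apr A$ as $\exists x'\,(\dpr_{x'}A \wedge \mathcal S(x'))$ and observe that, instantiating $x'$ by $x$, the implication $\dpr_x A \to \apr A$ is a logical consequence of $\mathcal S(x)$. So it suffices to lift Lemma~\ref{technischesmurf} under $\opr_\alpha$. Concretely: reason in ${\sf EA}$ and fix $x$; by Lemma~\ref{technischesmurf} we have $\opr_\alpha\mathcal S(x)$; working inside $\opr_\alpha$, assume $\dpr_x A$, conjoin it with $\mathcal S(x)$, and apply one $\exists$-introduction to conclude $\apr A$; discharging the assumption and leaving $\opr_\alpha$ gives $\opr_\alpha(\dpr_x A \to \apr A)$. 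To see that this is provable, not merely true, note that the $\alpha$-proof of $\mathcal S(x)$ extracted in Lemma~\ref{technischesmurf} is bounded by a fixed-height multi-exponential in $x$, and the additional steps (discharging $\dpr_x A$, forming a conjunction, one $\exists$-introduction) enlarge the proof-code only by a polynomially bounded amount; hence for each $x$ there is an $\alpha$-proof of $\dpr_x A \to \apr A$ of elementarily bounded code, and ${\sf EA}$ --- having exponentiation --- can verify this uniformly in $x$.

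For the ``Hence'' clause I would reason in ${\sf EA}+(\opr_\alpha A \to \exists x\, \opr_\alpha\dpr_x A)$ and assume $\opr_\alpha A$. The added axiom supplies some $x$ with $\opr_\alpha\dpr_x A$. The first assertion, being ${\sf EA}$-provable, is available; instantiating its $\forall x$ at this same witness gives $\opr_\alpha(\dpr_x A \to \apr A)$. Since $\opr_\alpha$ is ${\sf EA}$-provably closed under modus ponens (condition {\sf L}2 for the standard predicate $\opr_\alpha$ over {\sf EA}), we obtain $\opr_\alpha\apr A$. No $\Sigma^0_1$-collection or extra induction enters, because we merely instantiate an already-proved $\forall x$-statement at the witness handed to us by the hypothesis.

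The one step needing a little attention --- and it is routine given Lemma~\ref{technischesmurf} --- is the formalization in ${\sf EA}$ of the first assertion: one must confirm that appending a bounded, uniform logical manipulation to the proof-codes produced by Lemma~\ref{technischesmurf} keeps the total code elementarily bounded in $x$, so that ${\sf EA}$ proves the existence of such $\alpha$-proofs uniformly in $x$. Note also that, unlike Lemma~\ref{grotesmurf}, this argument makes no use of the upward persistence of $\dpr_y$ in $y$. Apart from the formalization point, the proof is a direct unfolding of the definition of $\apr$ together with modus ponens under $\opr_\alpha$.
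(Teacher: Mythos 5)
Your proof is correct and follows essentially the same route as the paper: fix $x$ in {\sf EA}, invoke Lemma~\ref{technischesmurf} to get $\opr_\alpha\mathcal S(x)$, conclude $\opr_\alpha(\dpr_x A \to (\dpr_x A \wedge \mathcal S(x)))$ and hence $\opr_\alpha(\dpr_x A \to \apr A)$, and then obtain the second claim by instantiating at the witness supplied by the hypothesis and closing under modus ponens inside $\opr_\alpha$. The extra care you take over proof-code bounds is harmless but not needed here, since once $\opr_\alpha\mathcal S(x)$ is in hand the remaining step is just the {\sf EA}-verifiable closure of $\opr_\alpha$ under logical consequence.
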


\begin{proof}
We work in {\sf EA}. Let $x$ be given. 
By Lemma~\ref{technischesmurf}, we
find $\opr_\alpha\mathcal S(x)$. Thus,  $\opr_\alpha(\dpr_x A \to (\dpr_x A \wedge\mathcal S(x)))$. This
gives us $\opr_\alpha(\dpr_x A \to \apr A)$.
\end{proof}

\begin{lemma}\label{babysmurf}
 ${\sf EA} + \forall x \, ((\dpr_x A \wedge \dpr_x(A\to B)) \to \dpr_x B)   \vdash  (\apr A \wedge \apr(A\to B)) \to \apr B$.
\end{lemma}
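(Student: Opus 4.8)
The plan is to argue directly inside the theory ${\sf EA} + \forall x \, ((\dpr_x A \wedge \dpr_x(A\to B)) \to \dpr_x B)$, unpacking the definition of $\apr$ and combining the two witnesses by taking a maximum. So I would begin: assume $\apr A \wedge \apr(A\to B)$. By the definition of $\apr_\theta$ this gives an $x_1$ with $\dpr_{x_1}A \wedge \mathcal S(x_1)$ and an $x_2$ with $\dpr_{x_2}(A\to B) \wedge \mathcal S(x_2)$. Set $x := {\sf max}(x_1,x_2)$.

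The next step is to check that $\dpr_x B \wedge \mathcal S(x)$ holds, which immediately yields $\apr B$. For the $\mathcal S$-part one cannot appeal to monotonicity of $\mathcal S$ in $x$ — enlarging $x$ both widens the range of the bounded quantifier over $\Sigma^0_1$-sentences $S$ and enlarges the set of admissible $\alpha$-proofs, so $\mathcal S$ is genuinely not upward persistent. Instead I would use the trivial observation that ${\sf max}(x_1,x_2)$ is literally equal to $x_1$ or to $x_2$; since $\mathcal S$ holds at both, it holds at $x$. (This is the only point where a tiny bit of care is needed, and it is really the "obstacle" such as it is; everything else is bookkeeping.) For the $\dpr$-part I would invoke the {\sf EA}-verifiable upward persistence of $\theta$ in its first argument: from $\dpr_{x_1}A$ and $x_1 \le x$ we get $\dpr_x A$, and from $\dpr_{x_2}(A\to B)$ and $x_2 \le x$ we get $\dpr_x(A\to B)$. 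Then the extra axiom, instantiated at $x$, delivers $\dpr_x B$.

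Finally, combining $\dpr_x B$ with $\mathcal S(x)$ gives $\exists x\,(\dpr_x B \wedge \mathcal S(x))$, i.e.\ $\apr B$, completing the argument; all of this reasoning (the case split on which of $x_1,x_2$ is larger, the use of ${\sf max}$, and the single applications of upward persistence and of the hypothesis) is visibly available in {\sf EA}. I do not expect to need Lemma~\ref{technischesmurf} or any bound-tracking here, since nothing about provability in $\alpha$ of $\mathcal S(x)$ is used — only the plain truth of $\mathcal S$ at the relevant indices.
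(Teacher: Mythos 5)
Your proof is correct and follows exactly the paper's argument: extract the two witnesses, pass to their maximum, use upward persistence of $\theta$ to get $\dpr$ at the maximum, observe that $\mathcal S$ holds there because the maximum is literally one of the two witnesses, and apply the hypothesis to obtain $\dpr B$ together with $\mathcal S$ at that index. Your explicit remark that $\mathcal S$ is not upward persistent and that one must instead use the case split on which witness is larger is a correct and welcome elaboration of the paper's ``it is easily seen''.
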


\begin{proof}
We work in  ${\sf EA} + \forall x \, ((\dpr_x A \wedge \dpr_x(A\to B)) \to \dpr_x B) $. Suppose $\apr A$ and $\apr (A\to B)$.
It follows that, for some $x$, we have $\dpr_x A$ and $\mathcal S(x)$ and that, for some $y$, we have $\dpr_y (A\to B)$ and
$\mathcal S(y)$. Let $z := {\sf max}(x,y)$. It is easily seen that $\dpr_z A$ and $\dpr_z (A\to B)$ and $\mathcal S(z)$. Hence,
$\dpr_z B$ and $\mathcal S(z)$, and, thus, $\apr B$.
\end{proof}

\begin{lemma}\label{minismurf}
 ${\sf EA} +  \forall S\, \exists x \, \opr_\alpha(S \to \dpr_x S) \vdash \opr_\alpha (S \to \apr S)$.
\end{lemma}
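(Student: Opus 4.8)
The plan is to read the conclusion off from Lemma~\ref{technischesmurf} almost for free, using the added axiom only to locate a suitable bound. Recall that, by definition, $\apr S$ is $\exists w\,(\dpr_w S \wedge \mathcal S(w))$, and that Lemma~\ref{technischesmurf} gives ${\sf EA} \vdash \forall x\, \opr_\alpha\mathcal S(x)$.

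Concretely, I would work in ${\sf EA} + \forall S\, \exists x\, \opr_\alpha(S \to \dpr_x S)$ and fix a $\Sigma^0_1$-sentence $S$. By the added axiom, choose $x$ with $\opr_\alpha(S \to \dpr_x S)$. By Lemma~\ref{technischesmurf}, instantiated at this same $x$, we also have $\opr_\alpha\mathcal S(x)$. Now reason inside $\opr_\alpha$: from $S \to \dpr_x S$ together with $\mathcal S(x)$ we obtain $S \to (\dpr_x S \wedge \mathcal S(x))$, hence $S \to \exists w\,(\dpr_w S \wedge \mathcal S(w))$, i.e.\ $S \to \apr S$, by the definition of $\apr$. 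Leaving the box yields $\opr_\alpha(S \to \apr S)$, and since $S$ was arbitrary this gives $\forall S\, \opr_\alpha(S \to \apr S)$.

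Unlike in some of the earlier lemmas, upward persistence of $\dpr_y$ in $y$ is not needed here: the single witness $x$ serves simultaneously for $\dpr_x S$ and for $\mathcal S(x)$. The only point requiring a moment's care is that the witness $x$ is produced (possibly non-effectively) by the external existential in the hypothesis, but Lemma~\ref{technischesmurf} is stated as ${\sf EA}\vdash\forall x\,\opr_\alpha\mathcal S(x)$, so it applies uniformly to whatever $x$ the hypothesis hands us, and no further collection or $\Sigma^0_1$-induction is consumed. So there is really no obstacle here; all the genuine work — in particular the multi-exponential bound on the $\opr_\alpha$-proofs of $\mathcal S(x)$ — has already been absorbed into Lemma~\ref{technischesmurf}.
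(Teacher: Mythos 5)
Your proof is correct and is essentially the paper's argument: the paper simply cites the first half of Lemma~\ref{stoeresmurf}, namely ${\sf EA}\vdash\forall x\,\opr_\alpha(\dpr_x S\to\apr S)$ (itself obtained from Lemma~\ref{technischesmurf} exactly as you do), and chains it with the hypothesis $\opr_\alpha(S\to\dpr_x S)$ inside the box. You have merely inlined that intermediate lemma, so there is nothing further to add.
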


\begin{proof}
This is immediate by Lemma~\ref{technischesmurf}.
\end{proof}

\noindent
We formulate the obvious theorem that follows from the Lemmas.
Let ${\sf W}_{\alpha,\theta}$ be {\sf EA} plus the following principles:
\begin{enumerate}[a.]
\item
$ \forall x\, (\opr_\alpha\dpr_x A \to \opr_\alpha A)$
\item
$\opr_\alpha A \to \exists x\, \opr_\alpha \dpr_x A$ 
\item  
$ \forall x \, ((\dpr_x A \wedge \dpr_x(A\to B)) \to \dpr_x B)$
\item
$ \forall S\, \exists x \, \opr_\alpha(S \to \dpr_x S)$ 
\end{enumerate}

\noindent
Let ${\sf W}^+_{\alpha,\theta}$ be {\sf EA} plus the following principles.
\begin{enumerate}[A.]
\item
$ \opr_\alpha\apr A \to \opr_\alpha A$
\item
$\opr_\alpha A \to  \opr_\alpha \apr A$ 
\item  
$(\apr A \wedge \apr(A\to B)) \to \apr B$
\item
$ \forall S\,  \opr_\alpha(S \to \apr S)$ 
\end{enumerate}

\begin{theorem}\label{samenvattingsmurf}
Let $\alpha$ be a $\Delta_0({\sf exp})$-predicate that numerates the axioms of $U$ in {\sf EA}, or, equivalently, in true arithmetic.
Let $\theta$ be a $\Sigma^0_1$-predicate that satisfies ${\sf EA} \vdash (\theta(y,z) \wedge y < y') \to \theta(y',z)$.
Then,
${\sf W}_{\alpha,\theta}$ implies ${\sf W}^+_{\alpha,\theta}$.
\end{theorem}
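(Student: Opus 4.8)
The plan is to read Theorem~\ref{samenvattingsmurf} off directly from the four lemmas already proved: the principles A--D of ${\sf W}^+_{\alpha,\theta}$ are precisely the conclusions of Lemmas~\ref{grotesmurf}, \ref{stoeresmurf}, \ref{babysmurf} and \ref{minismurf} when those lemmas are supplied with the principles a--d of ${\sf W}_{\alpha,\theta}$. So the whole proof is a matching-up exercise, and the only preliminary step is to check that the standing hypotheses of the theorem licence everything the lemmas need. The assumption that $\theta$ is $\Sigma^0_1$ and {\sf EA}-verifiably upward persistent in its first argument is exactly the background hypothesis on $\theta$ used throughout the derivations above, so $\mathcal S$, $\dpr_x$ and $\apr=\apr_\theta$ are well defined and all of Lemmas~\ref{grotesmurf}, \ref{technischesmurf}, \ref{stoeresmurf}, \ref{babysmurf}, \ref{minismurf} apply. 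The assumption that $\alpha$ is $\Delta_0({\sf exp})$ and numerates the axioms of $U$ in {\sf EA} is at least as strong as the ``$\alpha$ elementary and numerating the axioms of $U$ in $U$'' hypothesis of Lemma~\ref{grotesmurf}; and, since a closed $\Delta_0({\sf exp})$ formula is decided by {\sf EA}, here ``numerates in {\sf EA}'', ``numerates in true arithmetic'' and ``truly represents'' coincide, which is the parenthetical remark in the statement.

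The four implications then go through as follows. Principle A, $\opr_\alpha\apr A\to\opr_\alpha A$, is the conclusion of Lemma~\ref{grotesmurf} applied to principle a. For principle B, $\opr_\alpha A\to\opr_\alpha\apr A$: Lemma~\ref{stoeresmurf} first yields that {\sf EA} alone (hence ${\sf W}_{\alpha,\theta}$) proves $\forall x\,\opr_\alpha(\dpr_x A\to\apr A)$ --- this is where Lemma~\ref{technischesmurf}, the internal provability of $\mathcal S(x)$, does the work --- and the displayed consequence of Lemma~\ref{stoeresmurf} then converts principle b into principle B. Principle C, $(\apr A\wedge\apr(A\to B))\to\apr B$, is the conclusion of Lemma~\ref{babysmurf} applied to principle c. Principle D, $\forall S\,\opr_\alpha(S\to\apr S)$, is the conclusion of Lemma~\ref{minismurf} applied to principle d (Lemma~\ref{minismurf} itself being an immediate corollary of Lemma~\ref{stoeresmurf}), the outer $\forall S$ surviving because the derivation in Lemma~\ref{minismurf} is uniform in $S$. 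Since ${\sf W}_{\alpha,\theta}$ proves each of a--d, it proves each of A--D, that is, ${\sf W}^+_{\alpha,\theta}$.

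The only point requiring care is bookkeeping with the quantifier conventions of Subsection~\ref{arithm}: one must make sure the implicitly universally quantified sentence variables $A,B$, and the explicitly quantified $S$, in the principles of ${\sf W}_{\alpha,\theta}$ and ${\sf W}^+_{\alpha,\theta}$ line up with how the lemmas are phrased, and that no unadvertised appeal to $\Sigma^0_1$-collection or to extra exponentiation slips in. The remarks after Lemma~\ref{grotesmurf} already record that the relevant arguments stay safely inside {\sf EA}, so there is no genuine obstacle; all the content sits in the four lemmas and this theorem merely packages them.
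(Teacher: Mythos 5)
Your proposal is correct and is exactly the paper's intended argument: the paper gives no separate proof, introducing the theorem with ``the obvious theorem that follows from the Lemmas,'' since principles A--D of ${\sf W}^+_{\alpha,\theta}$ are precisely the conclusions of Lemmas~\ref{grotesmurf}, \ref{stoeresmurf}, \ref{babysmurf} and \ref{minismurf} under hypotheses a--d. Your bookkeeping remarks (uniformity in $S$ for principle D, and that the theorem's hypotheses on $\alpha$ and $\theta$ subsume those of the lemmas) are sound.
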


\noindent
The logic {\sf GLT} is the bimodal propositional logic which has {\sf GL} both for $\opr$ and $\apr$,
plus the following principles.  
\begin{itemize}
\item $\vdash \apr \phi \to \opr\phi$.
\item $\vdash \opr \phi \to \apr\opr\phi$.
\item $\vdash \opr \phi \to \opr\apr \phi$.
\item $\vdash \opr\apr\phi \to \opr \phi$.
\end{itemize} 

\noindent
By Theorem~\ref{samenvattingsmurf}, we have:

\begin{theorem}\label{feestsmurf}
Let $\alpha$ be a $\Delta_0({\sf exp})$-predicate that numerates the axioms of $U$ in {\sf EA}, or, equivalently, in true arithmetic.
Let $\theta$ be a $\Sigma^0_1$-predicate that satisfies ${\sf EA} \vdash (\theta(y,z) \wedge y < y') \to \theta(y',z)$.

Suppose ${\sf W}_{\alpha,\theta}$ is a true theory and $U \vdash {\sf W}_{\alpha,\theta}$.
Then {\sf GLT} is arithmetically valid in $U$. In addition, $U$ satisfies {\sf HB} both for
$\opr_\alpha$ and for $\apr_\theta$. Finally, $\apr_\theta$ satisfies the Kreisel Condition in $U$.
\end{theorem}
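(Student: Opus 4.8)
The plan is to derive all three assertions by using the two hypotheses \emph{separately}, funnelled through the principles A--D of ${\sf W}^+_{\alpha,\theta}$. Since $U \vdash {\sf W}_{\alpha,\theta}$, Theorem~\ref{samenvattingsmurf} gives $U \vdash {\sf W}^+_{\alpha,\theta}$, so $U$ proves every instance of A--D. Since, moreover, ${\sf W}_{\alpha,\theta}$ is \emph{true} and Theorem~\ref{samenvattingsmurf} is a theorem, ${\sf W}^+_{\alpha,\theta}$ is \emph{true} as well, so every instance of A--D holds in the standard model. Finally, because $\alpha$ is $\Delta_0({\sf exp})$ it numerates the axioms of $U$ in true arithmetic, i.e.\ $\alpha$ \emph{honestly} defines the axiom set of $U$; hence $\opr_\alpha B$ is a true sentence precisely when $U \vdash B$, and any honest $\opr_\alpha$-proof is a genuine $U$-proof. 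I shall pass between ``$U \vdash B$'' and ``$\opr_\alpha B$ is true'' tacitly.

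First I would settle the Kreisel Condition for $\apr_\theta$. If $U \vdash A$ then $\opr_\alpha A$ is true, so by the true principle B also $\opr_\alpha\apr A$ is true, whence $U \vdash \apr A$; conversely, if $U \vdash \apr A$ then $\opr_\alpha\apr A$ is true, so by the true principle A also $\opr_\alpha A$ is true, whence $U \vdash A$. In particular $\apr_\theta$ satisfies {\sf L}1 over $U$. For {\sf HB} for $\apr_\theta$ I would invoke the true principle D: for every $\Sigma^0_1$-sentence $S$ the sentence $\opr_\alpha(S\to\apr S)$ is true, hence $U\vdash S\to\apr S$; and since $\apr_\theta$ is $\Sigma^0_1$, {\sf L}3 for $\apr_\theta$ over $U$ is the special case $S:=\apr_\theta A$. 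That $U$ satisfies {\sf HB} for $\opr_\alpha$ is standard provable $\Sigma^0_1$-completeness, available since $\alpha$ is $\Delta_0({\sf exp})$ and $U$ extends {\sf EA}.

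For the arithmetic validity of {\sf GLT} in $U$ I would assemble the pieces. Component {\sf GL} for $\opr_\alpha$: $\tupel{{\sf EA},U,\alpha}$ is a strict Fefermanian representation whose presentation is $\Delta_0({\sf exp})$, so $\opr_\alpha$ satisfies the L\"ob Conditions over $U$ by the theorem on $\Delta_0({\sf exp})$-presentations above, and the L\"ob Conditions yield arithmetic validity of {\sf GL} in the usual way. Component {\sf GL} for $\apr_\theta$: by the previous paragraph $\apr_\theta$ satisfies {\sf L}1, {\sf L}2 (this is principle C, a theorem of $U$) and {\sf HB}, hence the L\"ob Conditions, over $U$, so {\sf GL} for $\apr_\theta$ is arithmetically valid in $U$. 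The four bridging axioms: $\opr\apr\phi\to\opr\phi$ and $\opr\phi\to\opr\apr\phi$ are the principles A and B, both theorems of $U$; $\opr\phi\to\apr\opr\phi$ is the instance $S:=\opr_\alpha A$ of {\sf HB} for $\apr_\theta$; and $\apr\phi\to\opr\phi$ I would prove inside $U$ --- given $\apr A$, fix $x$ with $\dpr_x A$; as $\dpr_x A$ is $\Sigma^0_1$, {\sf HB} for $\opr_\alpha$ gives $\opr_\alpha\dpr_x A$, and instantiating principle a of ${\sf W}_{\alpha,\theta}$ at this $x$ yields $\opr_\alpha A$. Since necessitation for $\opr_\alpha$ and for $\apr_\theta$ over $U$ is just {\sf L}1 for the respective predicate, the induction on {\sf GLT}-derivations goes through.

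The single load-bearing point is that the Kreisel Condition and {\sf HB} for $\apr_\theta$ do \emph{not} follow from $U \vdash {\sf W}_{\alpha,\theta}$ alone: inside $U$ one cannot in general strip the outer $\opr_\alpha$ off the principles A, B, D, as that would need a reflection principle an unsound $U$ may lack. The argument must instead exploit that ${\sf W}_{\alpha,\theta}$ --- hence ${\sf W}^+_{\alpha,\theta}$ --- is \emph{true}, read against the \emph{honesty} of the numeration $\alpha$; this is essentially the analogue of the proof of Theorem~\ref{tuinsmurf} with ``${\sf W}_{\alpha,\theta}$ true'' in place of ``$U$ sound'', and it is the only delicate step. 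Everything else is bookkeeping: matching each conjunct of {\sf GLT}, of {\sf HB}, and of {\sf K} to one of A--D of ${\sf W}^+_{\alpha,\theta}$ --- used as a $U$-theorem or as a true sentence --- or to a result proved earlier.
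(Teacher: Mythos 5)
Your proof is correct and follows the route the paper itself intends: the paper derives Theorem~\ref{feestsmurf} directly from Theorem~\ref{samenvattingsmurf}, and your argument is exactly the fleshed-out version of that derivation, reading the principles A--D of ${\sf W}^+_{\alpha,\theta}$ both as $U$-theorems and as true sentences against the honest numeration $\alpha$. You also correctly supply the two details the paper leaves implicit --- that the \emph{truth} of ${\sf W}_{\alpha,\theta}$ (not merely its $U$-provability) is what yields the Kreisel Condition and {\sf HB} for $\apr_\theta$, and the derivation of $U\vdash\apr A\to\opr_\alpha A$ from principle (a) plus provable $\Sigma^0_1$-completeness applied to $\theta$.
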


\subsection{Extensions of Peano Arithmetic Revisited}\label{expa}
We show how the case of extensions of Peano Arithmetic, treated in Section~\ref{filo}, fits the framework of the present section.

Let $U$ be a consistent extension of {\sf PA} and let $\alpha$ be an elementary predicate numerating 
the axioms of $U$ in {\sf EA} with $\pi \preceq \alpha$. 
We note that, equivalently, $\alpha$  numerates the axioms of $U$ in true arithmetic. 
Let $\alpha_z(x) :\iff \alpha(x) \wedge x \leq z$.
We take 
$ \Theta_\alpha(z,x) := {\sf prov}_{\alpha_z}(x)$ in the role of $\theta$.
Thus, we have  $ \opr_{\alpha_z}A = \Theta_\alpha(z,\gnum A)$ in the role of $\dpr_z A$ and we have  
$ \widetilde\opr_\alpha A := \exists z\, (\opr_{\alpha_z}A \wedge \mathcal S(z))$  the role of $\apr A$.

We define: $\widetilde \alpha(a) :\iff \alpha (a) \wedge \mathcal S(a)$.
We have:

\begin{lemma}\label{knutselsmurf}
${\sf EA} \vdash \forall A\, (\widetilde \opr_\alpha A \iff \opr_{\widetilde\alpha} A)$. 
\end{lemma}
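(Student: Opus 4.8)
The plan is to unfold both sides of the claimed equivalence into their underlying arithmetical definitions and observe that they are literally the same formula up to trivial rearrangement, with the only real work being to check that the two ways of "bounding the axiom set" inside a provability predicate coincide provably in {\sf EA}. Recall $\widetilde\opr_\alpha A$ is $\exists z\,(\opr_{\alpha_z}A \wedge \mathcal S(z))$, where $\alpha_z(x) :\iff \alpha(x) \wedge x \leq z$, and $\opr_{\widetilde\alpha}A$ is $\exists p\,{\sf proof}_{\widetilde\alpha}(p,\gnum A)$, where $\widetilde\alpha(a) :\iff \alpha(a) \wedge \mathcal S(a)$. The key auxiliary fact will be that, since $\mathcal S$ is (provably in {\sf EA}) monotone decreasing in its argument — more axioms are demanded to be $\Sigma^0_1$-sound, so $\mathcal S(z') \to \mathcal S(z)$ for $z \le z'$ — the predicate $\widetilde\alpha$ restricted to numbers $\le z$ agrees with $\alpha_z$ exactly when $\mathcal S(z)$ holds, and a $\widetilde\alpha$-proof is precisely an $\alpha_z$-proof for $z$ the code of that proof (or any larger bound) together with a verification of $\mathcal S$ at that bound.

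First I would establish, reasoning in {\sf EA}, the monotonicity lemma for $\mathcal S$: if $z \le z'$ and $\mathcal S(z')$, then $\mathcal S(z)$. This is immediate from the definition $\mathcal S(x) :\iff \exists w\,\forall S \le x\,(\opr_{\alpha,(x)}S \to {\sf true}^w(S))$, using that $\opr_{\alpha,(x)}$ is monotone in $x$ (fewer proof codes available for smaller bound) and that the range $S \le x$ shrinks. Next, for the direction $\widetilde\opr_\alpha A \to \opr_{\widetilde\alpha}A$: given $z$ with $\opr_{\alpha_z}A$ and $\mathcal S(z)$, take an $\alpha_z$-proof $p$ of $A$. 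Using the convention that proof codes dominate everything appearing in them, every axiom used in $p$ has code $\le z$, and by monotonicity of $\mathcal S$ each such axiom $a \le z$ satisfies $\mathcal S(a)$, hence $\widetilde\alpha(a)$; so the same object $p$ is a $\widetilde\alpha$-proof of $A$, giving $\opr_{\widetilde\alpha}A$. For the converse, given a $\widetilde\alpha$-proof $p$ of $A$, every axiom $a$ used satisfies $\alpha(a) \wedge \mathcal S(a)$; let $z$ be a bound on all axiom codes in $p$ (e.g. $z = p$). Then each such $a$ has $\alpha_z(a)$, so $p$ is an $\alpha_z$-proof, i.e. $\opr_{\alpha_z}A$; and — this is the one point needing a small argument — $\mathcal S(z)$ must hold, because if some $S \le z$ with $\opr_{\alpha,(z)}S$ were not true then... here I need $\mathcal S$ at the specific bound $z$, which does not automatically follow. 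The cleaner route is to take $z$ to be the maximum of the axiom codes in $p$ rather than $p$ itself; then each axiom $a$ used has $\mathcal S(a)$, and since one of them equals $z$ (the max), $\mathcal S(z)$ holds directly. With $\opr_{\alpha_z}A$ and $\mathcal S(z)$ we get $\widetilde\opr_\alpha A$.

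The main obstacle I anticipate is precisely this bookkeeping about which bound $z$ to use and verifying it is all formalizable in {\sf EA} with the efficient ("multi-exponential-free", order-of-multiplication) coding discussed in Subsection~\ref{arithm}: extracting the maximum axiom code from a proof, checking membership of an axiom in $\alpha_z$ versus in $\widetilde\alpha$, and confirming $\mathcal S$ at the chosen bound, must all be $\Delta_0({\sf exp})$ operations on the given data with no hidden blow-up. Since $\alpha$ is elementary, $\mathcal S$ is $\Sigma^0_1$, and proof codes bound their contents by the coding convention, each of these steps is routine but must be stated carefully. Apart from that, the proof is a matter of noting that "restrict the axioms to those of size $\le z$ and check $\mathcal S(z)$" and "restrict the axioms to those satisfying $\mathcal S$" define the same provability relation once one observes $\mathcal S$ is downward closed, so no genuine difficulty beyond the formalization remains.
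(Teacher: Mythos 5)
Your proof is correct and takes essentially the same route as the paper's: the left-to-right direction uses the downward persistence of $\mathcal S$ to see that every axiom of an $\alpha_z$-proof is a $\widetilde\alpha$-axiom, and the converse takes $z$ to be the maximal $\alpha$-axiom occurring in the $\widetilde\alpha$-proof, exactly as in the paper. (One trivial remark: in the first direction the bound $B \leq z$ on the axioms used comes directly from the definition of $\alpha_z$, not from the coding convention.)
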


\begin{proof}
We reason in {\sf EA}. 

Suppose $\widetilde \opr_\alpha A$. Then, for some $z$, we have $\opr_{\alpha_z}A$ and $\mathcal S(z)$.
Suppose $p$ witnesses $\opr_{\alpha_z}A$ and $B$ is an axiom used in $p$. Then, $\alpha(B)$ and $B \leq z$.
Since $\mathcal S$ is downward persistent w.r.t. $\leq$, we find $\mathcal S(B)$, and, hence, $\widetilde\alpha(B)$.

Conversely, suppose $\opr_{\widetilde \alpha}A$. let $q$ be a witnessing proof. Let $B$ be the maximal $\alpha$-axiom used in $q$.
We find $\mathcal S(B)$. Thus, $\opr_{\alpha_B} A$ and  $\mathcal S(B)$, i.e., $\widetilde \opr_\alpha A$.
\end{proof}

\begin{lemma}
The predicate $\widetilde \alpha$ numerates the axioms of $U$ in $U$.
Hence, $\widetilde \opr_\alpha$ is Fefermanian in $U$ over $U$.
\end{lemma}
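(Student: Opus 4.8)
The statement to establish is twofold: first, that $\widetilde\alpha$ numerates the axioms of $U$ in $U$, i.e.\ $U \vdash \widetilde\alpha(\gnum{A})$ iff $A$ is an axiom of $U$; and second, that the tuple $\tupel{U,U,\widetilde\alpha}$ is Fefermanian, which given the previous lemma amounts to the same numeration fact together with $U$ extending $U$ (trivial). So the whole content is the numeration claim, and the plan is to unwind the definition $\widetilde\alpha(a) :\iff \alpha(a) \wedge \mathcal S(a)$ and argue the two directions separately.

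For the forward direction, suppose $A$ is an axiom of $U$. Then, by hypothesis on $\alpha$, $A$ is numerated by $\alpha$ in {\sf EA} (equivalently in true arithmetic), so $\alpha(\gnum A)$ holds and $U \vdash \alpha(\gnum A)$ since $U$ extends {\sf EA}. It remains to see $U \vdash \mathcal S(\gnum A)$. Here I would invoke Lemma~\ref{technischesmurf}, which gives ${\sf EA} \vdash \forall x\, \opr_\alpha\mathcal S(x)$; instantiating at $x = \gnum A$ yields ${\sf EA}\vdash \opr_\alpha\mathcal S(\gnum A)$, and then reflection for $\opr_\alpha$ over $U$ — which we have because $U$ proves the axioms of $U$ are sound in the relevant sense, or more simply because $\mathcal S(\gnum A)$ is a true $\Sigma^0_1$ (really $\Sigma^0_2$) statement and $U \supseteq {\sf PA}$ is $\Sigma$-complete enough — gives $U \vdash \mathcal S(\gnum A)$. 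Actually the cleanest route: $\mathcal S(x)$ is provable from $\opr_\alpha\mathcal S(x)$ by applying $U$-provable reflection, but since $\mathcal S$ begins with an existential quantifier over a bound $z$ and the matrix is $\Delta_0({\sf exp})$, once we know it is true we get it in {\sf EA} by $\Sigma^0_1$-completeness after fixing the witness; and it \emph{is} true because $\mathcal S(\gnum A)$ asserts that every $\alpha$-provable-in-$\le \gnum A$-steps $\Sigma^0_1$ sentence below $\gnum A$ is true, which holds since $\alpha$ numerates the true axiom set of the consistent (hence, over {\sf PA}, arithmetically sound on $\Sigma^0_1$) theory $U$. So $U \vdash \widetilde\alpha(\gnum A)$.

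For the reverse direction, suppose $U \vdash \widetilde\alpha(\gnum A)$, i.e.\ $U \vdash \alpha(\gnum A) \wedge \mathcal S(\gnum A)$. In particular $U \vdash \alpha(\gnum A)$. Since $\alpha$ is an elementary predicate that numerates (not necessarily binumerates) the axioms of $U$, I need to know that $U$ proving $\alpha(\gnum A)$ forces $A$ to genuinely be an axiom. This follows because $\alpha$ numerates the axioms of $U$ in \emph{true arithmetic}: if $A$ were not an axiom then $\alpha(\gnum A)$ would be false, hence $\neg\alpha(\gnum A)$ true, and — using that $U$ is consistent and, over {\sf PA}, cannot prove a false elementary sentence whose negation is... — wait, this is exactly the soundness issue. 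The honest point is that for a \emph{consistent} extension $U$ of {\sf PA} with $\pi \preceq \alpha$, $U$ need not be $\Pi^0_1$-sound in general, so I should instead lean on binumeration: the standard elementary presentations $\alpha$ used here binumerate, so $A$ not an axiom implies ${\sf EA} \vdash \neg\alpha(\gnum A)$, whence $U \vdash \neg\alpha(\gnum A)$, contradicting $U \vdash \alpha(\gnum A)$ and the consistency of $U$. Thus $A$ is an axiom of $U$.

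\textbf{Main obstacle.} The delicate point is the reverse direction and the precise sense in which $\alpha$ ``numerates'' the axioms: the argument needs that $\alpha$ \emph{binumerates} (defines with complements) the axiom set, so that $U \vdash \alpha(\gnum A)$ together with $\mathrm{Con}(U)$ pins down axiomhood; the forward direction's use of Lemma~\ref{technischesmurf} is routine once one is comfortable passing from $\opr_\alpha\mathcal S(x)$ to $\mathcal S(x)$ inside $U$ (which is legitimate because $\mathcal S(\gnum A)$ is true and hence, being essentially $\Sigma$, provable in {\sf EA} after supplying the witness, or because $U$ proves $\Sigma$-reflection for $\alpha$). I would state at the outset the binumeration hypothesis on $\alpha$ — which is part of the standing assumptions on standard presentations in this paper — and then the rest is bookkeeping.
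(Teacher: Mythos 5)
Your reverse direction is essentially right and matches the paper: since $\alpha$ is $\Delta_0({\sf exp})$ and numerates the axiom set in \emph{true arithmetic}, $A\notin X$ means $\neg\,\alpha(\gnum{A})$ is a true elementary sentence, hence ${\sf EA}\vdash\neg\,\alpha(\gnum{A})$, hence $U\vdash\neg\,\widetilde\alpha(\gnum{A})$, and consistency of $U$ finishes it. No separate ``binumeration hypothesis'' needs to be added at the outset; it is already a consequence of the standing assumptions of Subsection~\ref{expa}.

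The forward direction, however, has a genuine gap exactly where you hesitated. You justify $U\vdash\mathcal S(\gnum{A})$ by arguing that $\mathcal S(\gnum{A})$ is \emph{true} and then invoking provable $\Sigma$-completeness, and you justify its truth by the claim that a consistent extension of {\sf PA} is ``arithmetically sound on $\Sigma^0_1$''. That claim is false: ${\sf PA}+\opr_\pi\bot$ is consistent and proves the false $\Sigma^0_1$-sentence $\opr_\pi\bot$. For such an unsound $U$, the sentence $\mathcal S(\gnum{A})$ --- which asserts the truth of every $\Sigma^0_1$-sentence $\leq\gnum{A}$ having an $\alpha$-proof $\leq\gnum{A}$ --- can actually be false once $\gnum{A}$ is large enough, and handling unsound $U$ is the declared point of the paper, so this cannot be repaired by a soundness assumption. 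Your alternative, internal reflection $U\vdash\opr_\alpha\mathcal S(\gnum{A})\to\mathcal S(\gnum{A})$, is likewise unavailable (L\"ob). The step the paper actually takes is metatheoretic and does not pass through the truth of $\mathcal S(\underline n)$ at all: Lemma~\ref{technischesmurf} gives ${\sf EA}\vdash\opr_\alpha\mathcal S(\underline n)$; since {\sf EA} itself \emph{is} sound, $\opr_\alpha\mathcal S(\underline n)$ holds in the standard model, i.e.\ there really is an $\alpha$-proof of $\mathcal S(\underline n)$; and since $\alpha$ numerates the axioms of $U$ in true arithmetic, $\alpha$-provability coincides with $U$-provability, so $U\vdash\mathcal S(\underline n)$ --- whether or not $\mathcal S(\underline n)$ is true. (A minor slip: $\mathcal S$ is $\Sigma^0_1$, not $\Sigma^0_2$; its inner universal quantifier is bounded.)
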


\begin{proof}
Let $X$ be the set of axioms set of axioms numerated by $\alpha$.

Suppose $n \in X$. Then $\alpha(n)$ and hence $U \vdash  \alpha(\underline n)$.
Since also, by Lemma~\ref{technischesmurf}, we have $U \vdash \mathcal S(\underline n)$,
 it follows that $U \vdash \widetilde \alpha(\underline n)$.
 
Suppose $U \vdash \widetilde\alpha (\underline n)$. Then, $U \vdash \alpha(\underline n)$ and, hence, $n \in X$. 
\end{proof}

\begin{lemma}\label{krokosmurf}
{\sf EA} verifies ${\sf W}_{\alpha,\Theta_\alpha}$.
\end{lemma}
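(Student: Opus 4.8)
The plan is to verify that \textsf{EA} proves each of the four principles (a)--(d) defining ${\sf W}_{\alpha,\Theta_\alpha}$, with $\dpr_z A$ instantiated as $\opr_{\alpha_z}A$ and $\apr$ as $\widetilde\opr_\alpha$. Principles (a) and (b) are the two halves of the absorption/emission pair \emph{for the cut-down approximations} $\opr_{\alpha_z}$, and these are where the real work lies; (c) and (d) are essentially formal manipulations of cut-down proofs inside $\opr_\alpha$, using the standard provable $\Sigma^0_1$-completeness facts from \cite{haje:meta91}. Throughout I would freely use that $\alpha_z$ is elementary (indeed $\Delta_0({\sf exp})$), that $\pi\preceq\alpha$ so $U$ extends \textsf{PA}, and that $z\mapsto\opr_{\alpha_z}$ is monotone in $z$ so upward persistence of $\Theta_\alpha$ in its first argument holds \textsf{EA}-verifiably.

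\textbf{The two easy principles.} For (c), reason in \textsf{EA}: if $\opr_{\alpha_z}A$ and $\opr_{\alpha_z}(A\to B)$, say via proofs $p,q$, then concatenating $p$ and $q$ and appending one modus ponens step yields an $\alpha_z$-proof of $B$ (all axioms used remain $\le z$), so $\opr_{\alpha_z}B$; hence $\forall x\,((\opr_{\alpha_x}A\wedge\opr_{\alpha_x}(A\to B))\to\opr_{\alpha_x}B)$. For (d), I need $\forall S\,\exists x\,\opr_\alpha(S\to\opr_{\alpha_x}S)$ for $\Sigma^0_1$-sentences $S$. Since $U\supseteq{\sf PA}$ and $\pi\preceq\alpha$, \textsf{EA} proves the $\Sigma^0_1$-completeness of $\opr_\pi$ with an explicit elementary bound on the required proof code, and since $\pi\preceq\alpha$ this lifts to $\opr_\alpha$: there is an elementary function $f$ with ${\sf EA}\vdash S\to\opr_{\alpha_{f(S)}}S$ provably, and then one more application of provable $\Sigma^0_1$-completeness (the outer $\opr_\alpha$) gives ${\sf EA}\vdash\opr_\alpha(S\to\opr_{\alpha_{f(S)}}S)$, so $x:=f(S)$ works.

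\textbf{The emission half, principle (b).} Reason in \textsf{EA} assuming $\opr_\alpha A$, witnessed by a proof $p$. Every axiom used in $p$ is an $\alpha$-axiom with code $\le p$, so $p$ is already an $\alpha_p$-proof of $A$; thus $\opr_{\alpha_p}A$, and by provable $\Sigma^0_1$-completeness (this statement is $\Sigma^0_1$ in $p$) we get $\opr_\alpha\opr_{\alpha_p}A$, hence $\exists x\,\opr_\alpha\opr_{\alpha_x}A$. This is principle (b).

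\textbf{The absorption half, principle (a) --- the main obstacle.} I expect this to be the crux. I need ${\sf EA}\vdash\forall x\,(\opr_\alpha\opr_{\alpha_x}A\to\opr_\alpha A)$. The natural argument: reason in \textsf{EA}, fix $x$, assume $\opr_\alpha\opr_{\alpha_x}A$; I want $\opr_\alpha A$. The problem is that inside the outer $\opr_\alpha$ one knows $\opr_{\alpha_x}A$, i.e.\ there is a \emph{cut-down} proof of $A$ using only axioms $\le x$; but $x$ here is a genuine arithmetical variable, so one cannot naively replace $\opr_{\alpha_x}A$ by $\opr_\alpha A$ inside the box without re-proving the finitely many axioms $\le x$, and there is no fixed bound. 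The resolution is to observe that, working inside $\opr_\alpha$, from $\opr_{\alpha_x}A$ one can formalize ``run the $\alpha_x$-proof'': each axiom $B\le x$ used satisfies $\alpha(B)$, so $\opr_\alpha B$ (this is $\Sigma^0_1$ in $B$, hence provably complete), and one can assemble, \emph{inside} $\opr_\alpha$, an $\alpha$-proof of $A$ from these — but this assembly is a $\Delta_0({\sf exp})$ operation on the data, so \textsf{EA} proves $\opr_{\alpha_x}A\to\opr_\alpha(\text{something coding an }\alpha\text{-proof of }A)$ with the code bounded elementarily in the $\alpha_x$-proof, and pushing this through the outer box gives $\opr_\alpha A$. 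Concretely I would invoke: (i) ${\sf EA}\vdash\forall x\,\opr_\alpha\forall B\le x\,(\alpha(B)\to\opr_\alpha B)$ — but wait, this needs $\Sigma^0_1$-collection, so more carefully I would use that $\alpha$ is $\Delta_0({\sf exp})$ so $\forall B\le x\,(\alpha(B)\to\opr_\alpha B)$ is $\Sigma^0_1$ only after bounding the proof codes, which \textsf{EA} does via an explicit elementary bound, giving ${\sf EA}\vdash\forall x\,\exists y\,\forall B\le x\,(\alpha(B)\to\opr_{\alpha,(y)}B)$ and then ${\sf EA}\vdash\forall x\,\opr_\alpha(\forall B\le x\,(\alpha(B)\to\opr_\alpha B))$ by provable completeness of this $\Sigma^0_1$ statement; (ii) inside the two nested boxes, combine the $\alpha_x$-proof of $A$ with the $\opr_\alpha B$ for each axiom $B$ used, as in the proof of Theorem given earlier in the excerpt (the finite-axiomatization argument of the first Theorem, now internalized), to obtain $\opr_\alpha\opr_\alpha A$, then use \textsf{L}3 / provable $\Sigma^0_1$-completeness to collapse the outer iteration and conclude $\opr_\alpha A$. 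The delicate point throughout is keeping all proof codes elementarily bounded so that no instance of $\Sigma^0_1$-collection beyond what \textsf{EA} has is needed; this is exactly the style of bookkeeping carried out in the proof of Lemma~\ref{technischesmurf}, so I would model the estimates on that.

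\textbf{Conclusion.} Having verified (a)--(d) in \textsf{EA}, the statement ${\sf EA}\vdash{\sf W}_{\alpha,\Theta_\alpha}$ follows by definition of ${\sf W}_{\alpha,\theta}$.
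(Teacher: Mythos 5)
Your treatments of (b), (c) and (d) are fine and match the paper's (the paper dismisses (b) and (c) as trivial and handles (d) by noting that for large enough $n$ the $\alpha$-axioms below $n$ already prove the single axiom $B$ of {\sf EA}, so that $\opr_{\alpha_{\underline n}}$ provably enjoys $\Sigma^0_1$-completeness). The problem is principle (a), which you correctly identify as the crux but then argue for incorrectly. Your argument simulates the $\alpha_x$-proof inside the box to pass from $\opr_\alpha\opr_{\alpha_x}A$ to $\opr_\alpha\opr_\alpha A$ --- a step that is in any case trivial, since $\alpha_x\preceq\alpha$ means every $\alpha_x$-proof already \emph{is} an $\alpha$-proof, so all the bookkeeping about reassembling proofs from $\opr_\alpha B$ for the axioms $B\le x$ buys nothing. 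You then propose to ``use {\sf L}3 / provable $\Sigma^0_1$-completeness to collapse the outer iteration and conclude $\opr_\alpha A$.'' That step is invalid: {\sf L}3 gives $\opr_\alpha A\to\opr_\alpha\opr_\alpha A$, and the converse $\opr_\alpha\opr_\alpha A\to\opr_\alpha A$ is precisely the absorption law for the \emph{full} predicate $\opr_\alpha$, which is false for any standard provability predicate of a consistent theory (instantiate $A:=\bot$; this is exactly the computation in Example~\ref{gargamel}, and $\Box\Box\bot\to\Box\bot$ already fails in {\sf GL} on a two-point chain). So your route to (a) is a dead end.

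What the paper actually uses --- and what your proof is missing --- is the essential reflexivity of {\sf PA}. Since $U$ extends {\sf PA} and $\pi\preceq\alpha$, one has the {\sf EA}-verifiable uniform reflection for the cut-down theories: ${\sf EA}\vdash\forall x\,\opr_\alpha(\opr_{\alpha_x}A\to A)$, because {\sf PA} proves reflection (via partial truth definitions for each level of the arithmetical hierarchy) for each of its finitely axiomatized subtheories, and this is formalizable with elementary bounds. Given that, (a) is immediate: from $\opr_\alpha\opr_{\alpha_x}A$ and $\opr_\alpha(\opr_{\alpha_x}A\to A)$, closure of $\opr_\alpha$ under modus ponens yields $\opr_\alpha A$. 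Note that this is a genuinely stronger input than the provable $\Sigma^0_1$-completeness facts you invoke everywhere else; it is the one place where the hypothesis $U\supseteq{\sf PA}$ (rather than just $U\supseteq{\sf EA}$) is essential, which is why the paper needs the separate, much more laborious construction of Section~\ref{olijkesmurf} for mere extensions of {\sf EA}.
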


\begin{proof}
The principle (a) follows by essential reflexivity.  The principles (b) and (c) are trivial. The principle (d) follows
since for a sufficiently large $n$ we will have, in {\sf EA}, that $\opr_{\alpha_{\underline n}} B$, where $B$ is a single axiom
for {\sf EA}.
\end{proof}

\noindent
By Theorem~\ref{feestsmurf} and Lemma~\ref{krokosmurf} we find:

\begin{theorem}
The logic {\sf GLT} is arithmetically valid for
$\opr_\alpha$ and for $\opr_{\widetilde\alpha}$ over $U$. In addition, we have {\sf HB} both for
$\opr_\alpha$ and for $\opr_{\widetilde\alpha}$ over $U$. Finally, $\opr_{\widetilde\alpha}$ satisfies the Kreisel Condition in $U$.
\end{theorem}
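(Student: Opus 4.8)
\emph{Proof proposal.} This theorem is a corollary of Theorem~\ref{feestsmurf} together with Lemmas~\ref{krokosmurf} and~\ref{knutselsmurf}, so the plan is to verify the hypotheses of Theorem~\ref{feestsmurf} for the present data $(\alpha,\Theta_\alpha)$ and then to transport the conclusions along the equivalence supplied by Lemma~\ref{knutselsmurf}. First I would check the input conditions. By assumption $\alpha$ is elementary, hence $\Delta_0({\sf exp})$, and it numerates the axioms of $U$ in {\sf EA} (equivalently, since $\alpha$ is $\Sigma^0_1$ and {\sf EA} is sound, in true arithmetic). The predicate $\Theta_\alpha(z,x) := {\sf prov}_{\alpha_z}(x)$ is $\Sigma^0_1$, and it is {\sf EA}-verifiably upward persistent in $z$: if $z<z'$ then $\alpha_z(y)$ implies $\alpha_{z'}(y)$, so every $\alpha_z$-proof is an $\alpha_{z'}$-proof, and {\sf EA} proves $(\Theta_\alpha(z,x)\wedge z<z')\to\Theta_\alpha(z',x)$. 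Note that with $\theta=\Theta_\alpha$ the predicate $\apr_\theta$ is literally $\widetilde\opr_\alpha$.

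Next I would assemble the main premise of Theorem~\ref{feestsmurf}. By Lemma~\ref{krokosmurf}, {\sf EA} verifies ${\sf W}_{\alpha,\Theta_\alpha}$; since {\sf EA} is sound, ${\sf W}_{\alpha,\Theta_\alpha}$ is a true theory, and since $U$ extends {\sf PA} and hence {\sf EA}, we have $U\vdash {\sf W}_{\alpha,\Theta_\alpha}$. Applying Theorem~\ref{feestsmurf} then yields: {\sf GLT} is arithmetically valid in $U$ for $\opr_\alpha$ and $\widetilde\opr_\alpha$; $U$ satisfies {\sf HB} for both $\opr_\alpha$ and $\widetilde\opr_\alpha$; and $\widetilde\opr_\alpha$ satisfies the Kreisel Condition in $U$.

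Finally I would transport these three conclusions from $\widetilde\opr_\alpha$ to $\opr_{\widetilde\alpha}$ using Lemma~\ref{knutselsmurf}, which gives ${\sf EA}\vdash\forall A\,(\widetilde\opr_\alpha A\iff\opr_{\widetilde\alpha}A)$. For {\sf HB} and for the Kreisel Condition this is immediate, since $U$ extends {\sf EA} and hence proves the same equivalence: e.g. $U\vdash\widetilde\opr_\alpha A$ iff $U\vdash A$ together with $U\vdash\widetilde\opr_\alpha A\iff\opr_{\widetilde\alpha}A$ gives $U\vdash\opr_{\widetilde\alpha}A$ iff $U\vdash A$. For the arithmetic validity of {\sf GLT}, the point needing a little care is that the two operators must be interchangeable inside nested boxes as well, since a realization of a {\sf GLT}-theorem may place $\widetilde\opr_\alpha$ in the scope of $\opr_\alpha$ (or of itself). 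Here I would invoke uniform provable necessitation: from ${\sf EA}\vdash\forall A\,(\widetilde\opr_\alpha A\iff\opr_{\widetilde\alpha}A)$ one gets ${\sf EA}\vdash\forall A\,\opr_\alpha(\widetilde\opr_\alpha A\iff\opr_{\widetilde\alpha}A)$, this step being available because $\beta\preceq\pi\preceq\alpha$; iterating, one may replace $\widetilde\opr_\alpha$ by $\opr_{\widetilde\alpha}$ at every modal depth in any arithmetic realization of a {\sf GLT}-theorem, so the validity passes from the pair $(\opr_\alpha,\widetilde\opr_\alpha)$ to the pair $(\opr_\alpha,\opr_{\widetilde\alpha})$ over $U$. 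I expect this last bookkeeping for nested modal contexts to be the only non-mechanical part of the argument; everything else is a direct citation of the results above.
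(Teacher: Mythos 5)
Your proof is correct and follows essentially the paper's own route: the paper obtains this theorem directly as a corollary of Theorem~\ref{feestsmurf} and Lemma~\ref{krokosmurf}, with Lemma~\ref{knutselsmurf} tacitly handling the passage from $\widetilde\opr_\alpha$ (which is literally $\apr_{\Theta_\alpha}$) to $\opr_{\widetilde\alpha}$. Your additional care about replacing the two {\sf EA}-provably equivalent predicates inside nested modal contexts via provable necessitation is a detail the paper leaves implicit, and you handle it correctly.
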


\noindent
In case $U$ is sound, one easily sees that the pair 
$\opr_\alpha$ and $\opr_{\widetilde \alpha}$ satisfies the conditions of Theorem~16 of \cite{henk:slow16}. It follows that
 {\sf GLT} is precisely the bi-modal propositional logic of $\opr_\alpha$ and $\opr_{\widetilde \alpha}$  in $U$, for sound $U$.

\begin{remark}\label{kooksmurf}
Let ${\sf EA}+{\sf ref}$ be {\sf EA} plus sentential reflection for predicate logic. Let $\tau$ be a standard axiomatization for
${\sf EA}+{\sf ref}$. Let $U$ be an extension of   ${\sf EA}+{\sf ref}$ and let $\alpha$ be an elementary axiomatization of $U$
such that $\tau \preceq \alpha$. With these basic ingredients we can repeat the development of the present section noting that
we are always looking at sentential reflection rather than uniform reflection.

In \cite{viss:pean14} we introduced the theory Peano Corto, which has many analogies to
${\sf EA}+{\sf ref}$. It would be interesting to see how much of our development can be repeated for the
case of Peano Corto.
\end{remark}

\section{Extensions of Elementary Arithmetic}\label{olijkesmurf}
In this section, we give a general construction of a $\theta$ with the desired properties for extensions of {\sf EA}.

 We first take a moment to see that, in order to get the desired combinations of
 properties for extensions of {\sf EA}, we indeed need to leave the realm of the Fefermanian predicates.
 
 \subsection{Two Examples}
   Our first example addresses the case that we only demand that our Fefermanian predicate is Kreiselian.
   
   \begin{example}
 Consider the theory $U := {\sf EA}+\opr_\beta\bot$. Here $\beta$ numerates a single axiom for {\sf EA}.
  Suppose there would be a Fefermanian predicate for $U$ over {\sf EA} that is Kreiselian. Say the witnessing
  predicate for the axiom set is $\alpha$.
  Let $\gamma(x) :\iff \beta(x) \vee x= \gnum{\opr_\beta\bot}$. 
  We have $\gamma \preceq \alpha$. Since, $U \vdash \opr_\beta \bot$, it follows that $U \vdash \opr_\gamma\bot$,
  and, hence, $U \vdash \opr_\alpha \bot$. So, $U \vdash \bot$. \emph{Quod non.}
  \end{example}
  
  \noindent
In the previous example, we needed an unsound theory.  In our second example, we consider the case that our example satisfies absorption. 
Here we can use a sound theory.

\begin{example}\label{gargamel}
Let $U := {\sf EA}$. Consider a Fefermanian predicate $P$ based on $\alpha$ for {\sf EA} over {\sf EA}. 
  We write $\apr$ for $P$. We note that $\beta\preceq \alpha$. Suppose we would have the absorption law for $\apr$ and $\opr_\beta$.
  Then, it would follow that:
  \begin{eqnarray*}
  {\sf EA} \vdash \opr_\beta\opr_\beta \bot & \to & \opr_\beta \apr\bot \\
  & \to & \opr_\beta \bot
  \end{eqnarray*} 
So, by L\"ob's Rule, ${\sf EA} \vdash \opr_\beta\bot$. \emph{Quod non.}
\end{example}

\begin{question}
We note that our examples are of finitely axiomatized theories. The construction of Section~\ref{expa}
gives us Fefermanian predicates for theories extending Peano Arithmetic. As pointed out in Remark~\ref{kooksmurf},
we can improve this to extensions of ${\sf EA}+{\sf ref}$. Obviously there is a big gap between our examples and 
counterexamples for the possibility to obtain a Fefermanian $\apr$. 

So, the question is whether we can find a larger class of theories for which we have a Fefermanian $\apr$ that
satisfies both the L\"ob Conditions and the Kreisel Condition and that satisfies the emission and absorption laws.  
\end{question}

\subsection{Motivating Remarks for Our Construction}
We may construct the desired predicates $\dpr_x$ in many ways. However, for didactic reasons, it good to maximize
the meaningfulness of the construction. 

As a first step, we note that we have the conditions of Theorems~\ref{grotesmurf} and \ref{stoeresmurf}, for $\opr_{\alpha,(x)}$. 
So, to obtain absorption and emission, $\dpr_x := \opr_{\alpha,(x)}$ is already sufficient. The idea of our construction is simply
to add closure under modus ponens and closure under {\sf HB} in a minimal way to $ \opr_{\alpha,(x)}$.

The minimal way to obtain the addition of modus ponens is simply to close of the $\alpha$-theorems with proofs $\leq x$
under modus ponens.  However, we can strengthen the analogy with our approach to the case of extensions of {\sf PA} by working with
a Hilbert system that only has modus ponens as a rule. Such deduction systems 
are described in \cite{quin:math96} (first edition 1940) and in \cite{fefe:arit60}.
When we have such a system we can, for the definition of $\dpr_x$, consider the theorems whose proofs contain only axioms 
\emph{whether logical or non-logical} which are $\leq x$. Thus, the main difference between our approach for the extensions of {\sf PA}
and the new one is that we stop treating logical and non-logical axioms as different.

What to do to obtain the Hilbert-Bernays condition? Simple: we add the true $\Sigma^0_1$-sentences to our original axiom set.

There is a small technical complication, due to the lack of $\Sigma^0_1$-collection, that necessitates us to stipulate a bound
on the witnesses of the truth of the $\Sigma^0_1$ sentences involved in a proof, but this complication disappears as soon as
we have  $\Sigma^0_1$-collection in the ambient theory.

\subsection{The Construction}\label{constru}
We fix a Hilbert system $\mathfrak H$ with as only rule \emph{modus ponens}. Let ${\sf logic}(x)$ be a $\Delta_0({\sf exp})$-formula that
numerates the set axioms of $\mathfrak H$ in {\sf EA}.

We assume that a $\Sigma^0_1$-sentence begins with a, possibly vacuous, existential quantifier. 

We give the basic definitions for our approach. Let a theory $U$ be given and a $\Delta_0({\sf exp})$-formula $\alpha$
that numerates the axioms of $U$ in {\sf EA} (or, equivalently, in true arithmetic).
\begin{itemize}
\item
 We define ${\sf ass}^\circ(p)$ as the set of assumptions op $p$, where now a logical axiom also counts as an assumption.
 In other words, anything not proved from previous items using modus ponens counts as an assumption.
 \item
 We write ${\sf proof}^\circ_\gamma(p,x)$ for ${\sf proof}(p,x) \wedge \forall y\in {\sf ass}^\circ(p)\, \gamma(y)$. 
 \item
 We write $\boxcircle_\gamma A$ for $\exists p\, {\sf proof}^\circ_\gamma(p,\gnum{A})$.
 \item
 $B$ is a \emph{direct $\circ$-subformula} of $A$ if $A$ is of the form $(C\to B)$ or $(B \to C)$. 
 The \emph{$\circ$-subformulas} of $A$ are the smallest set that
 contains $A$ and is closed under taking direct $\circ$-subformulas. 
 \item
 $\alpha^+(a) := \alpha(a) \vee {\sf logic}(a) \vee {\sf true}(a)$.\\ Here {\sf true} is a $\Sigma^0_1$-truth predicate. We
 we take ${\sf true}(a)$ to imply that $a$ is a $\Sigma^0_1$-sentence.
 \item
  $\widetilde\alpha^+(a) := \alpha^+(a)\wedge \mathcal S(a)$.
  \item
 $\alpha^+_{x,z}(a)  :\iff (\alpha(a) \vee {\sf logic}(a) \vee {\sf true}^z(a)) \wedge a\leq x$.
  \item
 $\alpha_x^+(a)  :\iff \alpha^+(a)  \wedge a\leq x$.
\item
$\Theta^\circ_\alpha(x,A) : \iff  \boxcircle_{\alpha^+_{x,\ast}} A : \iff \exists z\, \boxcircle_{\alpha^+_{x,z}} A $. \\
We use $\Theta^\circ_\alpha$ in the role of $\theta$. So  $\boxcircle_{\alpha^+_{x,\ast}} (\cdot)$ has the role of $\dpr_x$.
\item
We define $\widetilde \boxcircle_{\alpha^+} A :\iff \exists x\, (\boxcircle_{\alpha^+_{x,\ast}} A \wedge \mathcal S(x))$. 
So, $\widetilde \boxcircle_{\alpha^+}$ has the role of $\apr$.
\end{itemize}
 In case we have $\Sigma^0_1$-collection, the situation simplifies. We note that in the absence of $\Sigma^0_1$-collection
$ \boxcircle_{\alpha^+_{x}} A$ is not $\Sigma^0_1$ but $\Sigma^0_{1,1}$. 
See \cite{viss:pean14} or \cite{viss:orac15b} for an explanation of $\Sigma^0_{1,1}$.
 We have:
 
 \begin{lemma}
 \begin{enumerate}[a.]
 \item
 ${\sf EA} \vdash \boxcircle_{\alpha^+_{x,\ast}} A \to  \boxcircle_{\alpha^+_{x}} A$.
 \item
  ${\sf EA} + \mathrm B\Sigma_1 \vdash   \boxcircle_{\alpha^+_{x}} A \iff \boxcircle_{\alpha^+_{x,\ast}} A $.
  \item
  ${\sf EA} \vdash \widetilde \boxcircle_{\alpha^+} A \to \boxcircle_{\widetilde \alpha^+} A$.
   \item
  ${\sf EA} + \mathrm B\Sigma_1 \vdash  \boxcircle_{\widetilde \alpha^+} A \iff \widetilde \boxcircle_{\alpha^+} A $.
 \end{enumerate}
 \end{lemma}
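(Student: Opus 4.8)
The plan is to establish each of the four items by comparing the relevant proof notions directly, exactly in the spirit of the proof of Lemma~\ref{knutselsmurf}. The guiding principle is that $\mathcal S$ is downward persistent with respect to $\leq$ and that a $\circ$-proof uses only finitely many assumptions, each of which is $\leq$ the code of the proof. I would treat (a) and (b) together first, since (c) and (d) are obtained from them by the same bookkeeping that turns Lemma~\ref{stoeresmurf}/Lemma~\ref{krokosmurf} into statements about $\apr$.

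For (a): reason in {\sf EA} and suppose $\boxcircle_{\alpha^+_{x,\ast}} A$, i.e., there are $z$ and $p$ with ${\sf proof}^\circ(p,\gnum A)$ and every assumption $y \in {\sf ass}^\circ(p)$ satisfying $\alpha(y) \vee {\sf logic}(y) \vee {\sf true}^z(y)$, and $y \leq x$. Since ${\sf true}^z(y) \to {\sf true}(y)$, the same $p$ witnesses $\boxcircle_{\alpha^+_{x}} A$. For (b): we additionally have $\mathrm B\Sigma_1$, so given a $\circ$-proof $p$ with all assumptions satisfying $\alpha(y)\vee{\sf logic}(y)\vee{\sf true}(y)$ and $\leq x$, we apply $\Sigma^0_1$-collection to the finitely many assumptions $y \leq x$ that are true $\Sigma^0_1$-sentences to obtain a uniform bound $z$ with ${\sf true}^z(y)$ for all of them; then $p$ witnesses $\boxcircle_{\alpha^+_{x,z}} A$, hence $\boxcircle_{\alpha^+_{x,\ast}} A$. (The converse direction of (b) is just (a).)

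For (c): reason in {\sf EA} and suppose $\widetilde\boxcircle_{\alpha^+} A$, so for some $x$ we have $\boxcircle_{\alpha^+_{x,\ast}} A$ and $\mathcal S(x)$; fix a witnessing proof $p$ and a bound $z$. Every assumption $y \in {\sf ass}^\circ(p)$ satisfies $\alpha(y)\vee{\sf logic}(y)\vee{\sf true}^z(y)$ and $y \leq x$; since $\mathcal S$ is downward persistent, $\mathcal S(x)$ gives $\mathcal S(y)$, and since ${\sf true}^z(y) \to {\sf true}(y)$, we get $(\alpha(y)\vee{\sf logic}(y)\vee{\sf true}(y)) \wedge \mathcal S(y)$, i.e., $\widetilde\alpha^+(y)$. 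Hence $p$ witnesses $\boxcircle_{\widetilde\alpha^+} A$. Conversely, for (d) assume $\mathrm B\Sigma_1$ and suppose $\boxcircle_{\widetilde\alpha^+} A$ with witnessing proof $q$; let $x := {\sf ass}^\circ(q)$-maximum (or $0$ if empty), so all assumptions of $q$ are $\leq x$ and $x$ is itself one of them (when nonempty), whence $\widetilde\alpha^+(x)$ yields $\mathcal S(x)$. By $\Sigma^0_1$-collection, the true $\Sigma^0_1$-assumptions of $q$ admit a uniform witness bound $z$, so $q$ witnesses $\boxcircle_{\alpha^+_{x,z}} A$, hence $\boxcircle_{\alpha^+_{x,\ast}} A$; together with $\mathcal S(x)$ this gives $\widetilde\boxcircle_{\alpha^+} A$. (The converse of (d) is (c), since $\widetilde\alpha^+$-proofs have bounded assumptions.)

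The only genuine subtlety — the one place where something could go wrong — is the handling of the truth-bound $z$ in the absence of $\Sigma^0_1$-collection, and conversely the use of $\mathrm B\Sigma_1$ to recover it: this is precisely why (a) and (c) are one-directional over {\sf EA} and why the $\Sigma^0_{1,1}$ versus $\Sigma^0_1$ distinction arises, as flagged in the text just before the lemma. Everything else is the routine observation that $\circ$-proofs have all their assumptions bounded by (and in fact realized among) the numbers $\leq$ the proof code, combined with downward persistence of $\mathcal S$; no appeal to the Hilbert system $\mathfrak H$ beyond "modus ponens is the only rule" is needed.
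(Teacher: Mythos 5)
Your proposal is correct and follows exactly the route the paper intends: (a) is immediate from ${\sf true}^z \to {\sf true}$, (b) is the evident application of $\Sigma^0_1$-collection to the boundedly many true $\Sigma^0_1$-assumptions, and (c), (d) repeat the argument of Lemma~\ref{knutselsmurf} (downward persistence of $\mathcal S$ one way, the maximal assumption of the witnessing proof the other way) on top of (a) and (b). The paper's own proof is just a one-line sketch of precisely this; your write-up supplies the details faithfully, including the only delicate point, namely where the truth-bound $z$ requires collection.
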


\begin{proof}
(a) is trivial. (b) is an immediate application of collection. (c) and (d) are analogous to the proof of 
Lemma~\ref{knutselsmurf}, using respectively (a) and (b).
\end{proof}


We start with a well-known lemma.

\begin{lemma}\label{vrolijkesmurf}
 Let $\alpha$ be a $\Delta_0({\sf exp})$-predicate numerating the axiom
set of $U$ over {\sf EA}.
Then, ${\sf EA} \vdash \forall x, A \; \opr_\alpha(\opr_{\alpha,(x)} A \to A)$. 
\end{lemma}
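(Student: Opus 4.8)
The plan is to derive this from the fact that $\opr_{\alpha,(x)}A$ is a \emph{bounded} predicate in $x$ and $A$, together with formalized provable $\Delta_0({\sf exp})$-completeness for $\opr_\alpha$. The statement is a form of the well-known \emph{small reflection principle}, so once the pieces are in place the argument should be routine; the only real work is bookkeeping.

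First I would fix $x$ and $A$ and reason inside {\sf EA}. Since $\alpha$ is $\Delta_0({\sf exp})$, so is ${\sf proof}_\alpha$, and hence $\opr_{\alpha,(x)}A$, i.e. $\exists p \leq x\,{\sf proof}_\alpha(p,\gnum A)$, is a $\Delta_0({\sf exp})$-formula in $x$ and $A$; in particular it is decidable and I can split into cases. In the positive case, fix a witness $p \leq x$, which is literally an $\alpha$-proof of $A$; an elementary (p-time) transformation turns it into an $\alpha$-proof of $\opr_{\alpha,(x)}A \to A$ by weakening the conclusion with the antecedent, and since this transformation is p-time {\sf EA} verifies that the output is indeed an $\alpha$-proof of the target sentence, so $\opr_\alpha(\opr_{\alpha,(x)}A \to A)$. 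In the negative case, $\forall p \leq x\,\neg\,{\sf proof}_\alpha(p,\gnum A)$ is a true bounded sentence; formalized provable $\Delta_0({\sf exp})$-completeness for $\opr_\alpha$ (available in {\sf EA} because $U$ extends {\sf EA}, so $\opr_\alpha B$ is a true standard $\Sigma^0_1$-sentence, hence {\sf EA}-provable, whence ${\sf EA}\vdash\forall\varphi\,(\opr_\beta\varphi\to\opr_\alpha\varphi)$; see \cite[Ch V, Section 5b]{haje:meta91}) yields $\opr_\alpha(\neg\,\opr_{\alpha,(x)}A)$, and turning an $\alpha$-proof of $\neg\, B$ into one of $B \to A$ is again an elementary transformation, so once more $\opr_\alpha(\opr_{\alpha,(x)}A \to A)$. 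Combining the two cases and discharging the quantifiers over $x$ and $A$ completes the proof.

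The step I expect to be the main obstacle is making precise the \emph{uniform} form of provable $\Delta_0({\sf exp})$-completeness inside {\sf EA} — that {\sf EA} proves, for a variable bounded sentence, that its truth implies its $\opr_\alpha$-provability — and keeping track of the elementary bounds on the two proof transformations. Both are standard (the completeness fact is essentially in \cite[Ch V, Section 5b]{haje:meta91}, and the proof manipulations are of the same flavor as those used in the proof of Lemma~\ref{technischesmurf}), so I anticipate only careful arithmetical bookkeeping rather than a genuine difficulty.
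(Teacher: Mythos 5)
Your proof is correct and follows essentially the same route as the paper's: reason in {\sf EA}, use the decidability of the bounded formula $\exists p\leq x\,{\sf proof}_\alpha(p,\gnum A)$ to split into cases, note that in the positive case the witness is itself an $\alpha$-proof of $A$ (so $\opr_\alpha A$ and \emph{a fortiori} $\opr_\alpha(\opr_{\alpha,(x)}A\to A)$), and in the negative case apply provable $\Sigma^0_1$-completeness to the true bounded sentence $\forall q\leq x\,\neg\,{\sf proof}_\alpha(q,A)$ to get $\opr_\alpha\neg\,\opr_{\alpha,(x)}A$. The uniform completeness step you flag as the main obstacle is exactly the standard fact the paper also invokes, so there is no gap.
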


\begin{proof}
We reason in {\sf EA}.
Suppose, for some $p\leq x$, we have ${\sf proof}_\alpha(p,A)$. It clearly follows that
$\opr_\alpha A$ and, hence, \emph{a fortiori}, $\opr_\alpha (\opr_{\alpha,(x)} A \to A)$.

Suppose, for all $q\leq x$, we have $\neg\, {\sf proof}_\alpha(q,A)$. If follows, by
$\Sigma^0_1$-completeness, that $\opr_\alpha \forall q \leq p\, \neg\, {\sf proof}_\alpha(q,A)$.
In other words, $\opr_\alpha \neg\, \opr_{\alpha,(x)}A$. If follows that $\opr_\alpha (\opr_{\alpha,(x)} A \to A)$. 
\end{proof}

\noindent
The next lemma is in the spirit of the previous one, but takes a bit more work.

\begin{lemma}\label{uitzinnigesmurf}
Let $\alpha$ be a $\Delta_0({\sf exp})$-predicate numerating the axiom
set of $U$ over {\sf EA}.
We have ${\sf EA} \vdash \forall x, A\; \opr_\alpha( \boxcircle_{\alpha^+_{x,\ast}} A \to A)$. 
\end{lemma}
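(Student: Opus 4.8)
The plan is to follow the proof of Lemma~\ref{vrolijkesmurf} as closely as possible; the one genuinely new ingredient is a boundedness observation about the calculus $\mathfrak H$. Since modus ponens is the only rule of $\mathfrak H$, in any $\mathfrak H$-derivation all of whose $\circ$-assumptions have code $\leq x$ \emph{every} formula occurring in the derivation has code $\leq x$ as well: were this false, take the first occurrence of a formula $D$ with $\gnum D > x$; it is not a $\circ$-assumption, so it comes by modus ponens from $E$ and $(E\to D)$ occurring earlier, and then $(E\to D)$ has code $> x$ and occurs before $D$, contradicting minimality. Hence, reading derivations without repetitions, such a derivation has at most $x+1$ lines and code at most $q(x)$ for a fixed elementary $q$, so ${\sf EA}$ proves $\boxcircle_{\alpha^+_{x,z}}A \iff \exists p\leq q(x)\,{\sf proof}^\circ_{\alpha^+_{x,z}}(p,\gnum A)$; in particular, for fixed $z$ the predicate $\boxcircle_{\alpha^+_{x,z}}A$ is $\Delta_0({\sf exp})$ in $x,z,A$.

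\medskip

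Next I would convert $\mathfrak H$-derivations into ordinary $\alpha$-proofs. Arguing in ${\sf EA}$: if $\boxcircle_{\alpha^+_{x,z}}A$ is witnessed by $p\leq q(x)$, then each $\circ$-assumption $y$ of $p$ satisfies $y\leq x$ and is either an $\alpha$-axiom (a one-line $\alpha$-proof), a logical axiom of $\mathfrak H$ (a short $\alpha$-proof), or a $\Sigma^0_1$-sentence with ${\sf true}^z(y)$; in the last case provable $\Sigma^0_1$-completeness, in the uniform p-time form already used in the proof of Lemma~\ref{technischesmurf}, supplies an $\alpha$-proof of $y$ of size bounded by an elementary function of $x$ and $z$. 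Here the single witness bound $z$ does essential work: it bounds \emph{simultaneously} the $\alpha$-proofs of all the finitely many $\Sigma^0_1$-assumptions of $p$, whereas without $\Sigma^0_1$-collection we could not so bound them if each came only with its own witness --- this is exactly why the construction carries the parameter $z$ and forms $\boxcircle_{\alpha^+_{x,\ast}}$ by existentially quantifying it. Splicing these proofs into $p$, modus ponens being an $\alpha$-rule, gives an $\alpha$-proof of $A$ of size $\leq s(x,z)$ for a fixed elementary $s$, so ${\sf EA}\vdash\forall x,z,A\,(\boxcircle_{\alpha^+_{x,z}}A\to\opr_{\alpha,(s(x,z))}A)$.

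\medskip

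With these two facts I would then imitate the case-distinction of Lemma~\ref{vrolijkesmurf}, transposed to the bounded predicate. The idea is that, reasoning inside $\opr_\alpha$, from $\boxcircle_{\alpha^+_{x,\ast}}A$ one fixes a $z$ and a proof $p\leq q(x)$, passes to $\opr_{\alpha,(s(x,z))}A$ by the previous step, and then appeals to Lemma~\ref{vrolijkesmurf} instantiated at $y=s(x,z)$ --- ${\sf EA}$, hence $U$, already proves $\opr_\alpha\bigl(\opr_{\alpha,(y)}A\to A\bigr)$ for \emph{every} $y$, together with the bounded-completeness fact that the true $\Delta_0({\sf exp})$-statement $\opr_{\alpha,(s(x,z))}A$ is $\opr_\alpha$-provable. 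The \emph{main obstacle} --- the ``bit more work'' over Lemma~\ref{vrolijkesmurf} --- is precisely that $\boxcircle_{\alpha^+_{x,\ast}}A$ is genuinely $\Sigma^0_1$: absent $\Sigma^0_1$-collection the witness bound $z$ cannot be replaced by a bound depending on $x$ alone, so one cannot case-split on the truth of $\boxcircle_{\alpha^+_{x,\ast}}A$ as cheaply as one case-splits on the $\Delta_0({\sf exp})$-predicate $\opr_{\alpha,(x)}A$ in Lemma~\ref{vrolijkesmurf}. The argument therefore has to be arranged so that the $z$ produced by the hypothesis is consumed at once and locally, the boundedness of $p$ keeping everything $\Delta_0({\sf exp})$ once $z$ is fixed; assembling all of this into a single $\alpha$-proof uniform in $x$ and $A$ is the delicate point, and I would carry it out following the template of Lemma~\ref{vrolijkesmurf} step by step.
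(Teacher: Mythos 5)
Your first half is sound and coincides with the paper's key combinatorial step: since modus ponens is the only rule and the coding is monotone, a repetition-free $\circ$-derivation all of whose $\circ$-assumptions are $\leq x$ contains only formulas $\leq x$, hence has at most $x+1$ lines and code bounded by an elementary term in $x$ alone. That \emph{externally} given bound on the derivation is exactly what the argument needs, and it is the same observation the paper makes via $\circ$-subformulas and the minimality of $p_0$.

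The gap is in how you dispose of the true $\Sigma^0_1$-assumptions. Converting each of them into an $\alpha$-proof by formalized $\Sigma^0_1$-completeness makes your resulting $\alpha$-proof of $A$ have size $s(x,z)$, where $z$ is the truth-witness bound. But $z$ is produced by the existential quantifier inside $\boxcircle_{\alpha^+_{x,\ast}}A$ and is therefore \emph{internal} to $\opr_\alpha$; there is no bound on it in terms of $x$. Lemma~\ref{vrolijkesmurf} supplies $\opr_\alpha(\opr_{\alpha,(y)}A\to A)$ only for each externally fixed $y$; to invoke it ``at $y=s(x,z)$'' from inside the box you would need $\opr_\alpha\forall y\,(\opr_{\alpha,(y)}A\to A)$, i.e.\ $\opr_\alpha(\opr_\alpha A\to A)$, which by L\"ob's theorem yields $\opr_\alpha A$ for every $A$ and so is unavailable. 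The external case-split is equally blocked, as you yourself note, since $\neg\,\boxcircle_{\alpha^+_{x,\ast}}A$ is $\Pi^0_1$ and not provably complete. So the ``delicate point'' you defer is not a matter of care: with these two tools the proof does not close. The paper's resolution is a genuinely different maneuver: it never proves the $\Sigma^0_1$-assumptions $\mathscr S$ inside the system. Instead it applies the deduction theorem to turn the (externally bounded) derivation into an $\alpha$-proof of $\bigwedge\mathscr S\to A$ whose code is still bounded by a term in $x$ alone --- the \emph{sentences} in $\mathscr S$ are $\leq x$ even though their truth-witnesses are not. Lemma~\ref{vrolijkesmurf} then applies at that external bound, giving $\bigwedge\mathscr S\to A$ inside $\opr_\alpha$, and $\bigwedge\mathscr S$ itself is discharged inside $\opr_\alpha$ from the hypotheses ${\sf true}(S)$ via the truth lemma ${\sf true}(S)\to S$. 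The internal witness $z$ is thus consumed by the truth predicate rather than by a proof-length bound; you need this (or an equivalent) idea to finish.
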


\begin{proof}
 We will use a well-known fact, to wit that
\[{\sf EA} \vdash \forall x\, \opr_\alpha \forall y\, (y \leq x \iff \bigvee_{z\leq x} z=y).\]
This fact means that we do not have to worry that undesirable non-standard elements
creep in below elements that are internally standard in {\sf EA}.

We reason in {\sf EA}. Let $x$ be given.

We reason inside $\opr_\alpha$. Suppose (\$) $ \boxcircle_{\alpha^+_{x,\ast}} A$. 
Let  $z$ and $p$ witness $ \boxcircle_{\alpha^+_{x,\ast}} A$. Keeping $z$ fixed, we may, by the 
$\Delta_0({\sf exp})$-mimimum Principle, find a $p_0$ that is minimal with this property.

Suppose that $p_0$ contains a formula $B$ twice. If $B$ is the conclusion $A$ of $p_0$ we may omit the part after the first occurrence of $A$, obtaining 
a shorter proof. This contradicts the minimality of $p_0$. If $B$ is not the conclusion of  $p_0$, we may omit all occurrences of $B$ after the first one,
obtaining a shorter proof. This again contradicts the minimality of $p_0$. We may conclude that all sentences in $p_0$ occur only once in $p_0$.

We claim that every formula that is a (sub)conclusion of $p_0$ is a $\circ$-subformula of a formula in ${\sf ass}^\circ(p_0)$.
Suppose not. Let $B$ be the first such formula. Clearly, $B$ cannot be a $\circ$-assumption. So, it must be the conclusion of an application of modus
ponens and, thus, a direct $\circ$-subformula of a previous formula of the form $(C\to B)$. But this formula is by assumption a
$\circ$-subformula of ${\sf ass}^\circ(p_0)$. A contradiction.

So, all sentences occurring in $p_0$ are in $\circ$-subformulas of ${\sf ass}^\circ(p_0)$  and occur only once.
It follows that the sentences in $p_0$ are all $\leq x$ and, hence, the number of these sentences is
  also $\leq x$. So, by our assumptions on coding, we find $p_0 \approx x^x$.
  So, certainly $p_0$ will be estimated by $2^{x^2} +\underline k$, for a sufficiently large
  standard $k$. 
  
It follows that $\bigvee_{q \leq 2^{x^2} +\underline k} \, {\sf proof}^\circ_{\alpha^+}(q,A)$ and, hence,
\[(\dag)\;\; \bigvee_{q \leq 2^{x^2} +\underline k} \, {\sf proof}_{\alpha \cup {\sf true}}(q,A).\] (Here the $q$ are standard
on the $\opr_\alpha$-external {\sf EA}-level.) 

Now, suppose (\ddag) ${\sf proof}_{\alpha \cup {\sf true}}(q,A)$, where $q\leq 2^{x^2} +\underline k$. We transform $q$ as follows. Let $\mathscr S$ be the
set of the $\Sigma^0_1$-sentences in ${\sf ass}(q)$ that are not in $\alpha$. It follows that all $S\in \mathscr S$ are {\sf true}.
We transform $q$ in two steps. First we form a proof $q'$ from the assumptions $({\sf ass}(q) \setminus \mathscr S ) \cup \bigwedge \mathscr S$ 
with conclusion $A$. Then, we transform $q'$ to $q''$ with assumptions ${\sf ass}(q) \setminus \mathscr S$ to $\bigwedge \mathscr S \to A$.
We note that the big conjunction is bounded by $q$ and, thus, exists at the $\opr_\alpha$-external level. 

We easily see that $|q'|$ can be bounded by a linear term in $|q|$. The transformation $q' \mapsto q''$ uses the deduction theorem.
Inspection of the proof shows that here also $|q''|$ is linear in $|q'|$. Thus, $q''$ is bounded by $2^{\underline m x^2} +\underline n$, for appropriate
standard $m$ and $n$. We conclude that $q''$ is also $\opr_\alpha$-external. We have found that $\opr_{\alpha,(q'')} (\bigwedge \mathscr S \to A)$,
where $q''$ is $\opr_\alpha$-external.

We apply Lemma~\ref{vrolijkesmurf} to obtain $\bigwedge \mathscr S \to A$. We also have, since all elements of $\mathcal S$ are
{\sf true} and $\mathcal S$ is $\opr_\alpha$-external, that $\bigwedge\verz{{\sf true}(S) \mid S \in \mathscr S}$.
From this it follows that $\bigwedge \mathcal S$.
Combining $\bigwedge \mathscr S \to A$ and $\bigwedge \mathcal S$, we find $A$.
  
  By (\dag) we find $A$ without assumption (\ddag). We now cancel (\$) to obtain the sentence: $ \boxcircle_{\alpha^+_{x,\ast}} A \to A$.

  We leave the $\opr_\alpha$-environment. We have shown $\opr_\alpha ( \boxcircle_{\alpha^+_{x,\ast}} A \to A)$, as desired.
\end{proof}

\noindent
We insert a quick corollary of Lemma~\ref{uitzinnigesmurf}.

\begin{corollary}
Let $\alpha$ be a $\Delta_0({\sf exp})$-predicate numerating the axiom
set of $U$ over {\sf EA}. Then,
${\sf EA} \vdash \forall A\; (\exists x\, \boxcircle_{\alpha^+_{x,\ast}} A \iff \opr_\alpha A)$.
\end{corollary}

\begin{proof}
We reason in {\sf EA}. The left-to-right direction works as follows. We  use Lemma~\ref{uitzinnigesmurf}.
\begin{eqnarray*}
\exists x\, \boxcircle_{\alpha^+_{x,\ast}} A & \to & \exists x\, \opr_\alpha  \boxcircle_{\alpha^+_{x,\ast}} A \\
& \to & \opr_\alpha A
\end{eqnarray*} 

\noindent
The right-to-left direction is immediate since 
$\opr_\alpha A$ implies $\opr_{\alpha, (x)} A$, for some $x$, and 
$\opr_{\alpha, (x)} A$ implies  $\boxcircle_{\alpha^+_{x,\ast}} A$.
\end{proof}

\begin{lemma}\label{alligatorsmurf}
Let $\alpha$ be a $\Delta_0({\sf exp})$-predicate numerating the axiom
set of $U$ over {\sf EA}. Then,
{\sf EA} verifies ${\sf W}_{\alpha,\Theta^\circ_\alpha}$.
\end{lemma}

\begin{proof}
The principle (a) follows by Lemma~\ref{uitzinnigesmurf}.  
The principle (b) follows by: 
\begin{eqnarray*}
{\sf EA} \vdash \opr_\alpha A & \to & \exists x\, \opr_\alpha (\opr_{\alpha,(x)} A \wedge \mathcal S(x)) \\
& \to &  \exists x \, \opr_\alpha (\boxcircle_{\alpha^+_{x,\ast}} A \wedge \mathcal S(x)) 
\end{eqnarray*}

\noindent
The principles (c) and (d) are immediate by the construction of $\boxcircle_{\alpha^+_{x,\ast}}$.
\end{proof}

\noindent
By Theorem~\ref{feestsmurf} and Lemma~\ref{alligatorsmurf} we find:

\begin{theorem}\label{dreumessmurf}
Let $\alpha$ be a $\Delta_0({\sf exp})$-predicate numerating the axiom
set of $U$ over {\sf EA}. Then,
the logic {\sf GLT} is arithmetically valid in $U$ for  $\widetilde\boxcircle_{\alpha^+}$ and $\opr_\alpha$. 
In addition, we have {\sf HB} over $U$ both for
$\opr_\alpha$ and for $\opr_{\widetilde\alpha}$. Finally, $\widetilde\boxcircle_{\alpha^+}$ satisfies the Kreisel Condition in $U$.
\end{theorem}

\begin{question}
The predicate $\boxcircle_{\alpha^+_{x,\ast}}$ allows us to give an Orey H\'ajek Characterization for extensions of {\sf EA}. 
Suppose $\gamma$ is an elementary predicate that numerates the axioms of $V$ over {\sf EA}. Then,
$U$ is $\Pi^0_1$-conservative over $V$ iff, for all $n$, we have $U\vdash \odotpossible_{\gamma^+_{\underline n,\ast}}\top$. 

It seems to me that, using this characterization, many results in the work of Per Lindstr\"om and Christian Bennet should be transferable
from the case of extensions of {\sf PA} to the case of extensions of {\sf EA}. It would be interesting to explore this.
\end{question}

\begin{question}
In our definition of $\boxcircle$, we counted instances of all  proposition-logical and  predicate-logical 
schemes as axioms. I think we only need to count the instances of a few specific schemes that are
essentially predicate logical. It would be interesting to explore this.
\end{question}

\section{An Application}
We first prove a very general result.

\begin{theorem}\label{conservatievesmurf}
Let $U$ be any theory and suppose that $\apr$ satisfies  the L\"ob Conditions 
in $U$ and {\sf HB}, to wit, $U \vdash S \to \apr S$, for $S\in \Sigma^0_1$.\footnote{We note that in the present general
 context
the L\"ob Conditions and {\sf HB} are mutually independent.} We have:
\begin{enumerate}[i.]
\item
$U$ is $\Pi_1^0$-conservative over $U+\apr \bot$.
\item
If $\apr$ satisfies the Kreisel condition for $U$, then
$U$ is $\Sigma_1^0$-conservative over $U+\neg \, \apr \bot$.
\end{enumerate}
\end{theorem}

\begin{proof}
Suppose $\apr$ satisfies the L\"ob conditions and the {\sf HB} for $U$.

We prove (i). 
Let $P$ be a $\Pi^0_1$-sentence. Suppose $U + \apr\bot \vdash P$. Then,
(a) $U + \neg\, P \vdash  \neg \, \apr\bot$. Hence, $U \vdash \apr \neg\, P \to \apr \neg\, \apr \bot$.
It follows by {\sf HB} and by the formalized Second Incompleteness Theorem for $\apr$,
which follows by the L\"ob Conditions,  that
(b) $U + \neg\, P \vdash  \apr\bot$. Combining (a) and (b), we find $U \vdash P$.

We prove (ii). Suppose that $\apr$ satisfies the Kreisel Condition for $U$.
Let $S$ be a $\Sigma^0_1$-sentence. 
Suppose $U + \neg\, \apr \bot \vdash S$. It follows that
$U \vdash \apr\bot \vee S$, and, hence, by  the L\"ob Conditions and {\sf HB},
$U \vdash \apr S$. By the Kreisel Condition, we find $U \vdash S$.
\end{proof}

\noindent
The proof of (i) is ascribed by Per Lindstr\"om, in \cite[p94]{lind:aspe03}, to Georg Kreisel in \cite{kreis:weak62}. 

Consider any recursively enumerable theory $U$ and let $\alpha$ be a $\Delta_0({\sf exp})$-formula  
that numerates a set of axioms for $U$ in {\sf EA}.
We note that, by Craig's trick, we can always find  such a $\Delta_0({\sf exp})$-formula.
 We take $\apr_\alpha :=\widetilde\boxcircle_{\alpha^+}$.
We note that $\apr$ fulfills the conditions of Theorem~\ref{conservatievesmurf} for $U$.
It follows that $\apr_\alpha \bot$ is a $\Sigma^0_1$-sentence such that $U$ is $\Pi^0_1$-conservative
over $U+\apr_\alpha \bot$ and  $\neg\,\apr_\alpha \bot$ is a $\Pi^0_1$-sentence such that $U$ is $\Sigma^0_1$-conservative
over $U+\apr_\alpha \bot$. 

For extensions $U$ of Peano Arithmetic, the existence of a $\Sigma^0_1$-sentence $S$, such that $U$ is $\Pi^0_1$-conservative over
$U+S$ and $U$ is $\Sigma^0_1$-conservative over
$U+\neg \,S$ is a special case of a result due to Robert Solovay. See
\cite{guas:part79}. See also \cite[Chapter 5]{lind:aspe03}.\footnote{Our method can be extended to prove the full Solovay result.}

\begin{remark}
We note that $\apr_\alpha\bot$ is \emph{a fortiori} a Rosser sentence for $U$.
The resulting proof of Rosser's Theorem is like the proof of the Second Incompleteness Theorem
in the sense that the sentence under consideration is self-reference-free, but in the proof
of the desired property we do use self-reference. 
Another example of a self-reference-free $\Sigma^0_1$ Rosser sentence (for extensions of {\sf PA}) is due to Fedor Pakhomov.
See \cite{pakh:sol217}. We note that Pakhomov's construction is, in a sense, orthogonal to ours. An essential feature of Pakhomov's construction is
that, like the ordinary Rosser sentence and its opposite,  it produces $\Sigma^0_1$-sentences $S_0$ and $S_1$, each with the Rosser property over $U$, such that
we have $U \vdash \neg\, (S_0\wedge S_1)$ and $U \vdash \opr_\alpha \bot \iff (S_0\vee S_1)$. It follows that e.g. $U \vdash S_0 \to \neg\, S_1$, but $U\nvdash \neg\, S_1$.
So, $S_0$ is not $\Pi^0_1$-conservative. The non-$\Pi^0_1$-conservativity of Pakhomov's sentences is an important feature since it allows him to use
them for his alternative proof of Solovay's arithmetical completeneness theorem for L\"ob's Logic.
\end{remark}

\noindent
We formulate a consequence of Theorem~\ref{sluwesmurf}.

\begin{theorem}\label{nozelsmurf}
Suppose $U$ is a recursively enumerable extension of {\sf PA}.
Then there is a $\Sigma^0_1$-predicate
$\sigma^\ast$ such that $\tupel{U,U,\sigma^\ast}$ is Fefermanian and $\opr_{\sigma^\ast}$ satisfies, in $U$, the L\"ob-conditions and the
Kreisel condition.
\end{theorem} 

\begin{proof}
Consider a recursively enumerable extension $U$ of {\sf PA}. We can easily construct a $\Sigma^0_1$-formula $\sigma$ such that 
$\tupel{{\sf PA},U,\sigma}$ is Fefermanian and $\pi\preceq \sigma$. We now apply Theorem~\ref{sluwesmurf} to obtain the desired $\sigma^\ast$.
\end{proof}

\noindent It follows from Theorems~\ref{conservatievesmurf}(ii) and \ref{nozelsmurf} that:

\begin{theorem}\label{smurferella}
Suppose $U$ is a recursively enumerable extension of {\sf PA}. 
Then, there is a $\Sigma^0_1$-predicate
$\sigma^\ast$ such that $\tupel{U,U,\sigma^\ast}$ is Fefermanian and $\oco_{\sigma^\ast}\top$ is $\Sigma^0_1$-conservative over $U$.
\end{theorem}

\noindent
Theorem~\ref{smurferella} stands in interesting contrast to a result due to Craig Smory\'nski that
is reported in Example~1.6 of \cite{guas:part79}. 

\begin{theorem}[Smory\'nski]\label{smoorf}
Suppose $U$ is an extension of {\sf PA} such  $\tupel{{\sf PA},U,\sigma}$ is Fefermanian and
$\alpha$ is $\Delta_1^0({\sf PA})$. Then, $\oco_\alpha\top$ is $\Sigma^0_1$-conservative over
$U$ iff $U$ is $\Sigma^0_1$-sound. 
\end{theorem}

\noindent
Inspection of the proof shows that  no special properties of {\sf PA} are  used (except its $\Sigma^0_1$-soundness
when it occurs in the role of base theory), so, in fact, we have a far more
general result. However, we will not pursue that line here.
We just present a variation of Smory\'nski's result for extensions of {\sf PA}.

\begin{theorem}\label{smoorsmurf}
Suppose $\tupel{{\sf PA},U,\sigma}$ is Fefermanian, where $\sigma$ is $\Sigma^0_1$. Then, 
the following are equivalent.
\begin{enumerate}[i.]
\item
$U$ is $\Sigma^0_1$-sound.
\item
$\opr_\sigma$ satisfies the Kreisel condition for $U$.
\item
$\oco_{\sigma}\top$ is $\Sigma^0_1$-conservative over $U$.
\end{enumerate}
\end{theorem}

\begin{proof}
Suppose $\tupel{{\sf PA},U,\sigma}$ is Fefermanian, where $\sigma$ is $\Sigma^0_1$.

(i) $\To$ (ii). Suppose $U$ is $\Sigma^0_1$-sound.
We note that the fact that $\tupel{{\sf PA},U,\sigma}$ is Fefermanian, implies
that $\sigma$ truly enumerates the G\"odel codes of an axiom set of $U$.
Suppose $U \vdash \opr_\sigma A$. Then, $\opr_\sigma A$ is true, and, hence $U \vdash A$.

(ii) $\To$ (iii). This is immediate by Theorem~\ref{conservatievesmurf}.

(iii) $\To$ (i). Suppose  $U+\oco_{\sigma}\top$ is $\Sigma^0_1$-conservative over
$U$. By Theorem~\ref{labbekaksmurf}, we have $U\vdash \forall A\,  (\opr_{\hat\sigma}A \iff \opr_\sigma A)$.
So, $U+\oco_{\hat\sigma}\top$ is $\Sigma^0_1$-conservative over
$U$.  By Theorem~\ref{kleutersmurf}, the triple $\tupel{{\sf PA},U, \hat\sigma}$ is Fefermanian.
Moreover, $\hat\sigma$ is elementary and, hence, \emph{a fortiori}, $\Delta^0_1({\sf PA})$.
So, by Theorem~\ref{smoorf}, we find that $U$ is $\Sigma^0_1$-sound.
\end{proof}

The contrast between Theorem~\ref{smurferella} and Theorem~\ref{smoorsmurf},
 illustrates well how sensitive the choice of the base theory may be already in the 
 case where we consider $\Sigma^0_1$-axiomatizations. Moreover, this contrast
 may be a warning against uncritical use of Craig's trick of the form \emph{we may always replace
 a $\Sigma^0_1$-axiomatization by a $\Delta_0({\sf exp})$-axiomatization}.
 This is only unproblematic when we have a $\Sigma^0_1$-sound base theory and
 consider extensions of ${\sf EA}+\mathrm B\Sigma^0_1$. In all other cases, some care
 is needed.
   

\appendix

\section{Examples}\label{examples}
For convenience, we repeat our overview of the examples.

\[
\begin{tabular}{|l||c|c|c||c|c|c|} \hline
& base & lead & $P$ & L\"ob & Kreisel & Feferman \\ \hline\hline
Example~\ref{ppp} & {\sf EA} & {\sf EA} & $\Sigma^0_1$ & $+$ & $+$ & $+$ \\ \hline
Example~\ref{ppm} &{\sf EA} & {\sf EA} & $\Sigma^0_1$ & $+$ & $+$ & $-$ \\ \hline
Example~\ref{pmp} & {\sf EA} & ${\sf EA}+\opr_\beta\bot$ & $\Sigma^0_1$ & $+$ & $-$ & $+$ \\ 
  & {\sf EA} & {\sf EA} & $\Sigma^0_2$ &  &  &  \\ \hline
Example~\ref{pmm} &{\sf EA} &{\sf EA} & $\Sigma^0_1$ & $+$ & $-$ & $-$ \\ \hline
Example~\ref{mpp} &{\sf PA} & {\sf PA} & $\Sigma^0_2$ & $-$ & $+$ & $+$ \\ 
& {\sf EA} & {\sf EA} & $\Sigma^0_{1,1}$ &&& \\ \hline
Example~\ref{mpm} &{\sf EA} & {\sf EA} & $\Sigma^0_1$ & $-$ & $+$ & $-$ \\ \hline
Example~\ref{mmp} & {\sf PA} & {\sf PA} & $\Sigma^0_2$ & $-$ & $-$ & $+$ \\ 
 & {\sf EA} & ${\sf EA}+\opr_\beta\opr_\pi\bot$ & $\Sigma^0_{1,1}$ &  &  &  \\ \hline
Example~\ref{mmm} & {\sf EA} & {\sf EA} & $\Sigma^0_1$ & $-$ & $-$ & $-$ \\ \hline
\end{tabular}
\]

\begin{example}\label{ppp} ${+}{+}{+}$:
We take $U_0 := U := {\sf EA}$ and $P := {\sf prov}_\beta$. 
Clearly, this $P$ satisfies all three conditions for {\sf EA}.
We note that our example satisfies {\sf HB} too.
\end{example}

\begin{example}\label{ppm}${+}{+}{-}$:
By Theorem~\ref{dreumessmurf}, there is a $\Sigma^0_1$-predicate $P$ for $U := {\sf EA}$, that satisfies L\"ob Conditions and the absorption principle
in combination with $\opr_\beta$. By Example~\ref{gargamel}, the predicate $P$ cannot be Fefermanian.

Here is a second example. We consider 
${\sf cfprov}_\beta(x)$, which stands for cut-free provability in {\sf EA}. 
Let's write $\opr^{\sf cf}_\beta A$ for ${\sf cfprov}(\gnum{A})$.
We have L\"ob's Logic for $\opr^{\sf cf}_\beta$. See \cite{viss:inte90} and \cite{kals:towa91}. 
Also we easily see that $\opr^{\sf cf}_\beta$ satisfies the Kreisel condition.
However, $\opr^{\sf cf}_\beta$ cannot be Fefermanian for {\sf EA} over {\sf EA}. 
If it were Fefermanian, we would have ${\sf EA} \vdash \opr_\beta \bot \to \opr_\beta^{\sf cf}\bot$.
To prove that this is impossible is outside the scope of the present article. We just give the outline 
of the proof, so that the reader can see the basic idea.

Suppose ${\sf EA} \vdash \opr_\beta \bot \to \opr_\beta^{\sf cf}\bot$. It follows that ${\sf EA} \vdash \oco^{\sf cf}_\beta \top \to \oco_\beta\top$.
Then, by a meta-theorem from \cite{wilk:sche87}, it follows that (a) ${\sf S}^1_2 + \oco^{\sf cf}_\beta\top \vdash \oco_\beta^J\top$, for a definable cut $J$.
We also have that (b)  {\sf EA} interprets ${\sf S}^1_2 + \oco^{\sf cf}_\beta\top$. Combining (a) and (b), we find that {\sf EA} interprets
${\sf S}^1_2+\oco_\beta\top$. But this contradicts the Second Incompleteness Theorem.

We note that our examples also satisfy {\sf HB}. A disadvantage is that they do not work for the global version
of the L\"ob Conditions, where the quantifiers over sentences for {\sf L}2 and {\sf L}3 are inside the theory. It would be interesting to have an example
for this case.  
\end{example} 

\begin{example}\label{pmp}${+}{-}{+}$:
 Here is an example of a Fefermanian predicate that does satisfy the L\"ob Conditions and does not satisfy the Kreisel
 Condition. Let $U_0:= {\sf EA}$,  $U:={\sf EA}+\opr_\beta \bot$. Let $P := {\sf prov}_{\gamma}$, where $\gamma(x) :\iff \beta(x) \vee x = \gnum{\opr_\beta\bot}$. 

We have
$U\vdash \opr_\gamma \bot$, but  $U\nvdash \bot$, so  the Kreisel condition fails for $P$ and $U$.

We note that our example also works for $U_0 := U:={\sf EA}+\opr_\beta\bot$.

We provide a second example, where the base and the lead theories are sound. By Theorem~\ref{tuinsmurf},
the predicate that represents the axioms cannot be $\Sigma^0_1$.

We take $U_0 := U := {\sf EA}$.
We define:
$ \delta(x) :\iff \beta(x) \vee (\oco_\beta \top \wedge x = \gnum{\bot})$.
We note that $\delta$ numerates $\verz{B}$ in {\sf EA}, where $B$ is the single axiom for {\sf EA}. 
We find:
\begin{eqnarray*}
{\sf EA} \vdash \opr_\delta\bot & \iff & (\opr_\beta \bot \wedge \opr_\beta \bot) \vee (\oco_\beta\top \wedge \opr_\beta (\bot \to \bot)) \\
& \iff & \top 
\end{eqnarray*}
So ${\sf EA} \vdash \opr_\delta \bot$. It follows that ${\sf prov}_\delta$ is not Kreiselian and 
satisfies the Feferman Conditions and the L\"ob Conditions.  
We note that ${\sf prov}_\delta$ is $\Sigma^0_2$.
\end{example} 

\begin{example}\label{pmm}${+}{-}{-}$:
Let $U_0 := U := {\sf EA}$ and let $P$ be $x=x$. Clearly, $P$ satisfies the L\"ob conditions in {\sf EA}, but $P$ is
not Kreiselian. Since {\sf EA} is sound and $P$ is $\Sigma^0_1$, a Fefermanian $P$ must be Kreiselian. So, $P$ is also not Fefermanian.  
\end{example}

\begin{example}\label{mpp}${-}{+}{+}$:
The case of Fefermanian predicates that do not satisfy the L\"ob Conditions is among the most interesting of our cases. The study of the possibilities for such
predicates for the case of extensions of Peano Arithmetic has been taken up by Taishi Kurahashi in great depth. See \cite{kura:arit18a} and \cite{kura:arit18b}.

A classical example of such a predicate is \emph{Feferman Provability}. 
 We define $\pi^\star(y) :\iff  \exists x \,(\pi_x(y) \wedge \oco_{\pi_x}\top)$.  Let $P := {\sf prov}_{\pi^\star}$. 
 This predicate was introduced by Solomon Feferman in his classical paper \cite{fefe:arit60}.
By the essential reflexivity of {\sf PA}, one finds that $\opr_{\pi^\star}$ is Fefermanian for {\sf PA} over {\sf PA}. 
For closely related reasons $\opr^\star$ is Kreiselian. However, $\opr^\star$ does not satisfy the 
L\"ob Conditions. The bimodal provability logic of $\opr_\pi$ and $\opr_{\pi^\star}$
 has been characterized by Volodya Shavrukov in \cite{shav:smar94}. For some earlier work, see \cite{mont:alge78} and \cite{viss:pean89}.
We note that $\opr_{\pi^\star}$ is $\Sigma^0_2$.

An example of quite different flavor uses the fact that {\sf EA} does not verify $\Sigma^0_1$-collection. We refer the reader to \cite[Subsection 6.2]{viss:look19}.
This example provides a $\Sigma^0_1$-axiomatization $\sigma$. As a consequence ${\sf prov}_\sigma$ is $\Sigma^0_{1,1}$. We refer the reader to e.g.
\cite{viss:pean14} for a further explanation of the relevant formula hierarchy.
\end{example}

\begin{example}\label{mpm}${-}{+}{-}$:
Here is an example of a $P$ that satisfies the Kreisel Condition but not the L\"ob Conditions and the Feferman Condition.
Let $U_0 := U:= {\sf EA}$. Let
$P(x) := ({\sf prov}_\pi(x) \wedge x \neq\, \gnum{\bot})$. 

We note that {\sf L}2 fails for $P$ over {\sf EA}. This shows that $P$ does not satisfy the L\"ob Conditions and, hence, cannot be Fefermanian.
\end{example}

\begin{example}\label{mmp}${-}{-}{+}$: 
The examples are adaptations of the predicates and theories in Example~\ref{mpp}. We just add something to make 
the examples non-Kreiselian. We use the notations of Example~\ref{mpp}. 

We give our first example of a non-Kreiselian Fefermanian predicate that does not satisfy the L\"ob Conditions.
Let $U_0:= U := {\sf PA}$.
We take: \[\pi^\circ(x) :\iff \pi^\star(x) \vee (\oco_\pi\top \wedge x = \gnum{\opr_\pi\bot}).\] 
Let $P := {\sf prov}_{\pi^\circ}$.

It is easily seen that $\pi^\circ$ numerates the axioms of {\sf PA} in {\sf PA}. 
We have, using the fact that we have {\sf HB} for $\opr_{\pi^\star}$:
\begin{eqnarray*}
{\sf PA} \vdash \opr_{\pi^\circ} \opr_\pi\bot & \iff & (\opr_\pi\bot \wedge \opr_{\pi^\star}\opr_\pi\bot) \vee (\oco_\pi \top \wedge \opr_{\pi^\star}(\opr_\pi\bot\to\opr_\pi\bot)) \\
& \iff & \opr_\pi\bot \vee \oco_\pi\top \\
& \iff & \top
\end{eqnarray*}
It follows that ${\sf PA} \vdash \opr_{\pi^\circ}\opr_\pi \bot$. However, ${\sf PA} \nvdash \opr_\pi\bot$, so ${\sf prov}_{\pi^\circ}$ is not Kreiselian.
We have:
\begin{eqnarray*}
{\sf PA} \vdash \opr_{\pi^\circ} \bot & \iff & (\opr_\pi\bot \wedge \opr_{\pi^\star}\bot) \vee (\oco_\pi \top \wedge \opr_{\pi^\star}\neg\, \opr_\pi\bot) \\
& \iff & (\opr_\pi\bot \wedge \bot) \vee (\oco_\pi \top \wedge \opr_{\pi}\neg\, \opr_\pi\bot)  \\
& \iff & \oco_\pi \top \wedge \opr_{\pi}\bot \\
& \iff &\bot
\end{eqnarray*}
So ${\sf PA} \vdash \neg\, \opr_{\pi^\circ} \bot$. Thus, $\opr_{\pi^\circ}$ cannot satisfy the L\"ob Conditions.
Finally, $\opr^\circ$ is clearly $\Sigma^0_2$.

\medent
Here is our second example. The presentation of our example presupposes that the reader has \cite[Subsection 6.2]{viss:look19} at hand.
The predicate $\sigma$ is imported here from that paper.
We take $U_0:={\sf EA}$, $U := {\sf EA}+\opr_\beta\opr_\pi\bot$. We define
$\sigma^\circ(x) :\iff \sigma(x) \vee x= \gnum{\opr_\beta\opr_\pi\bot}$. Clearly, $\sigma^\circ$ numerates the axioms of $U$ in {\sf EA}. 
We take $P(x) := {\sf prov}_{\sigma^\circ}(x)$. Evidently, $P$ is Fefermanian for ${\sf EA}+\opr_\beta\opr_\pi\bot$ over {\sf EA}.

 Since, we have ${\sf EA} \vdash \opr_\beta C \to \opr_\sigma C$ and   ${\sf EA} \vdash \opr_\sigma C \to \opr_{\sigma^\circ} C$.
 We find $U \vdash \opr_{\sigma^\circ} \opr_\pi\bot$. Suppose we would have $U \vdash \opr_\pi\bot$. In would follow that
 ${\sf EA} + \opr_\beta \opr_\pi\bot \vdash \opr_\pi\bot$, and, hence, ${\sf EA} \vdash \opr_\pi\bot$. \emph{Quod non}.
 Thus $U \nvdash \opr_\pi\bot$. So, $P$ is not Kreiselian.
 
 We note that over $U$ we have, by $\Sigma^0_1$-completeness, that $\opr_\sigma$ and $\opr_{\sigma^\circ}$ coincide also
 in iterated $\opr_\sigma$-contexts.
 Suppose $\opr_{\sigma^\circ}$ satisfies the L\"ob Conditions over $U$. It follows that $\opr_{\sigma}$ also satisfies
 the L\"ob Conditions over $U$. So, \emph{a fortiori}, we find $U \vdash \opr_\sigma \oco_\sigma \top \to \opr_\sigma \bot$.
 By Lemma~6.11 and Lemma~6.12  of \cite[Subsection 6.2]{viss:look19}, we find:
 \[ {\sf EA} + \opr_\beta\opr_\pi\bot \vdash ({\sf S}^\star \vee \opr_\beta\bot) \to (({\sf S}^\star \wedge \opr_\beta\opr_\beta \bot) \vee \opr_\beta\bot).\]
 It follows that
 ${\sf EA} +\opr_\pi\bot + {\sf S}^\star \vdash  \opr_\beta\opr_\beta \bot$. However, we can construct a model of 
 ${\sf EA}+\opr_\pi\bot + {\sf S}^\star+ \neg\opr_\beta\opr_\beta\bot$ using
 the construction described in \cite[Subsection 6.2]{viss:look19}.
\end{example}

\begin{example}\label{mmm}${-}{-}{-}$:
We take $U_0 := U := {\sf EA}$ and $P(x) := \bot$. 
It is clear that $P$ does not satisfy the L\"ob Conditions. Nor is it Kreiselian or Fefermanian.
\end{example}

\end{document}